	\def\AA{{\ifmmode{\mathbbm{A}}\else{$\mathbbm{A}$}\fi}}
	\def\BB{{\ifmmode{\mathbbm{B}}\else{$\mathbbm{B}$}\fi}}
	\def\CC{{\ifmmode{\mathbbm{C}}\else{$\mathbbm{C}$}\fi}}
	\def\EE{{\ifmmode{\mathbbm{E}}\else{$\mathbbm{E}$}\fi}}
	\def\FF{{\ifmmode{\mathbbm{F}}\else{$\mathbbm{F}$}\fi}}
	\def\HH{{\ifmmode{\mathbbm{H}}\else{$\mathbbm{H}$}\fi}}
	\def\KK{{\ifmmode{\mathbbm{K}}\else{$\mathbbm{K}$}\fi}}
	\def\NN{{\ifmmode{\mathbbm{N}}\else{$\mathbbm{N}$}\fi}}
	\def\PP{{\ifmmode{\mathbbm{P}}\else{$\mathbbm{P}$}\fi}}
	\def\QQ{{\ifmmode{\mathbbm{Q}}\else{$\mathbbm{Q}$}\fi}}
	\def\RR{{\ifmmode{\mathbbm{R}}\else{$\mathbbm{R}$}\fi}}
	\def\TT{{\ifmmode{\mathbbm{T}}\else{$\mathbbm{T}$}\fi}}
	\def\UU{{\ifmmode{\mathbbm{U}}\else{$\mathbbm{U}$}\fi}}
	\def\ZZ{{\ifmmode{\mathbbm{Z}}\else{$\mathbbm{Z}$}\fi}}
	\def\A{{\ifmmode{\mathscr{A}}\else{$\mathscr{A}$}\fi}}
	\def\B{{\ifmmode{\mathscr{B}}\else{$\mathscr{B}$}\fi}}
	\def\C{{\ifmmode{\mathscr{C}}\else{$\mathscr{C}$}\fi}}
	\def\D{{\ifmmode{\mathscr{D}}\else{$\mathscr{D}$}\fi}}
	\def\E{{\ifmmode{\mathscr{E}}\else{$\mathscr{E}$}\fi}}
	\def\F{{\ifmmode{\mathscr{F}}\else{$\mathscr{F}$}\fi}}
	\def\G{{\ifmmode{\mathscr{G}}\else{$\mathscr{G}$}\fi}}
	\def\H{{\ifmmode{\mathscr{H}}\else{$\mathscr{H}$}\fi}}
	\def\I{{\ifmmode{\mathscr{I}}\else{$\mathscr{I}$}\fi}}
	\def\J{{\ifmmode{\mathscr{J}}\else{$\mathscr{J}$}\fi}}
	\def\K{{\ifmmode{\mathscr{K}}\else{$\mathscr{K}$}\fi}}
	\def\L{{\ifmmode{\mathscr{L}}\else{$\mathscr{L}$}\fi}}
	\def\M{{\ifmmode{\mathscr{M}}\else{$\mathscr{M}$}\fi}}
	\def\N{{\ifmmode{\mathscr{N}}\else{$\mathscr{N}$}\fi}}
	\def\O{{\ifmmode{\mathscr{O}}\else{$\mathscr{O}$}\fi}}
	\def\P{{\ifmmode{\mathscr{P}}\else{$\mathscr{P}$}\fi}}
	\def\Q{{\ifmmode{\mathscr{Q}}\else{$\mathscr{Q}$}\fi}}
	\def\R{{\ifmmode{\mathscr{R}}\else{$\mathscr{R}$}\fi}}
	\def\S{{\ifmmode{\mathscr{S}}\else{$\mathscr{S}$}\fi}}
	\def\T{{\ifmmode{\mathscr{T}}\else{$\mathscr{T}$}\fi}}
	\def\U{{\ifmmode{\mathscr{U}}\else{$\mathscr{U}$}\fi}}
	\def\V{{\ifmmode{\mathscr{V}}\else{$\mathscr{V}$}\fi}}
	\def\W{{\ifmmode{\mathscr{W}}\else{$\mathscr{W}$}\fi}}
	\def\X{{\ifmmode{\mathscr{X}}\else{$\mathscr{X}$}\fi}}
	\def\Y{{\ifmmode{\mathscr{Y}}\else{$\mathscr{Y}$}\fi}}
	\def\Z{{\ifmmode{\mathscr{Z}}\else{$\mathscr{Z}$}\fi}}
	\newtheoremstyle{slanted}% name
	{}%      Space above, empty = `usual value'
	{}%      Space below
	{\slshape}% Body font
	{}%         Indent amount (empty = no indent, \parindent = para indent)
	{\bfseries}% Thm head font
	{.}%        Punctuation after thm head
	{ }% Space after thm head: \newline = linebreak
	{}%         Thm head spec
	\theoremstyle{slanted}
	\newtheorem{theo}{Theorem}[section]
	\newtheorem{prop}[theo]{Proposition}
	\newtheorem{remark}[theo]{Remark}
	\newtheorem{lemma}[theo]{Lemma}
	\newtheorem{corollary}[theo]{Corollary}
	\def\ind#1{\mathbbmss{1}_{#1}}
	\def\egdef{:=}
	\def\Id{\mathop{\mbox{Id}}}
 	\newcommand{\tend}[2]{\xrightarrow[#1\to#2]{}}
	\newcommand{\red}{\mathop{\rm Red}}
	\def\ind#1{\mathbbmss{1}_{#1}}
\title{Almost-sure Growth Rate of Generalized Random Fibonacci sequences}
\author{\'Elise Janvresse, Beno\^it Rittaud, Thierry de la Rue}
\address{\'Elise Janvresse, Thierry de la Rue:
Laboratoire de Math\'ematiques Rapha\"el Salem, 
Universit\'e de Rouen, CNRS -- 
Avenue de l'Universit\'e -- 
F76801 Saint \'Etienne du Rouvray.}
\email{Elise.Janvresse@univ-rouen.fr\\Thierry.de-la-Rue@univ-rouen.fr}
\address{Beno\^it Rittaud: Laboratoire Analyse, G\'eom\'etrie et Applications, Universit\'e Paris 13 Institut Galil\'ee, CNRS -- 
99 avenue Jean-Baptiste Cl\'ement -- 
F93 430 Villetaneuse.}
\email{rittaud@math.univ-paris13.fr}
\begin{document}
\bibliographystyle{amsplain}
\keywords{random Fibonacci sequence; Rosen continued fraction; upper Lyapunov exponent; Stern-Brocot intervals; Hecke group}
\subjclass[2000]{37H15, 60J05, 11J70}
%\secondaryclass{}

\begin{abstract}
We study the generalized random Fibonacci sequences defined by their first nonnegative terms and for $n\ge 1$, 
$F_{n+2} = \lambda F_{n+1} \pm F_{n}$ (linear case) and 
$\widetilde F_{n+2} = |\lambda \widetilde F_{n+1} \pm \widetilde F_{n}|$ (non-linear case), 
where each $\pm$ sign is independent and either $+$ with probability $p$ or $-$ with probability $1-p$ ($0<p\le 1$). 
Our main result is that, when $\lambda$ is of the form $\lambda_k = 2\cos (\pi/k)$ for some integer $k\ge 3$, the exponential growth of $F_n$ for $0<p\le 1$, and of $\widetilde F_{n}$ for $1/k < p\le 1$, is almost surely positive and given by
$$
\int_0^\infty \log x\, d\nu_{k, \rho} (x), 
$$
where $\rho$ is an explicit function of $p$ depending on the case we consider, taking values in $[0, 1]$, and $\nu_{k, \rho}$ is an explicit probability distribution on $\RR_+$ defined inductively on generalized Stern-Brocot intervals. 
We also provide an integral formula for $0<p\le 1$ in the easier case $\lambda\ge 2$.
Finally, we study the variations of the exponent as a function of $p$.
\end{abstract}

\maketitle
\section{Introduction}
Random Fibonacci sequences have been defined by Viswanath by $F_1=F_2=1$ and the random recurrence 
$F_{n+2}= F_{n+1} \pm F_{n} $, where the $\pm$ sign is given by tossing a balanced coin. 
In~\cite{viswanath2000}, he proved that 
$$
\sqrt[n]{|F_n|}\longrightarrow1.13198824\ldots \quad\mbox{a.s.}
$$
and the logarithm of the limit is given by an integral expression involving a measure defined on Stern-Brocot intervals.
Rittaud~\cite{rittaud2006} studied the exponential growth of $\EE(|F_n|)$: it is given by an explicit algebraic number of degree 3, which turns out to be strictly larger than the almost-sure exponential growth obtained by Viswanath. 
In~\cite{janvresse2007}, Viswanath's result has been generalized to the case of an unbalanced coin and to the so-called non-linear case $F_{n+2}= |F_{n+1} \pm F_{n}|$. Observe that this latter case reduces to the linear recurrence when the $\pm$ sign is given by tossing a balanced coin.

A further generalization consists in fixing two real numbers, $\lambda$ and $\beta$, and considering 
the recurrence relation $F_{n+2}=\lambda F_{n+1}\pm \beta F_{n}$ (or $F_{n+2}=\vert \lambda F_{n+1}\pm \beta F_{n}\vert$), 
where the $\pm$ sign is chosen by tossing a balanced (or unbalanced) coin. 
By considering the modified sequence
$G_{n}:=F_{n}/\beta^{n/2}$, 
which satisfies $G_{n+2}=\frac{\lambda}{\sqrt{\beta}} G_{n+1}\pm G_{n}$, we can always reduce to the case $\beta=1$.
The purpose of this article is thus to generalize the results presented in~\cite{janvresse2007} on the almost-sure exponential growth to random Fibonacci sequences with a multiplicative coefficient: $(F_n)_{n\ge 1}$ and $(\widetilde F_n)_{n\ge 1}$, defined inductively by their first two positive terms $F_1=\widetilde F_1 =a$, $F_2=\widetilde F_2=b$
%$\widetilde F_1=\widetilde F_2=1$
and for all $n\ge 1$, 
\begin{equation}
\label{linear case}
F_{n+2} = \lambda F_{n+1} \pm F_{n} \qquad \mbox{(linear case)},
\end{equation}
\begin{equation}
\label{non-linear case}
\widetilde F_{n+2} = |\lambda \widetilde F_{n+1} \pm \widetilde F_{n}| \qquad \mbox{(non-linear case)},
\end{equation}
where each $\pm$ sign is independent and either $+$ with probability $p$ or $-$ with probability $1-p$ ($0<p\le 1$). 
We are not yet able to solve this problem in full generality. If $\lambda\ge2$, the linear and non-linear cases are essentially the same, and the study of the almost-sure growth rate can easily be handled (Theorem~\ref{th:case_2}).
The situation $\lambda<2$ is much more difficult. However, the method developed in~\cite{janvresse2007} can be extended in a surprisingly elegant way to a countable family of $\lambda$'s, namely when $\lambda$ is of the form $\lambda_k = 2\cos (\pi/k)$ for some integer $k\ge 3$. 
The simplest case $\lambda_3=1$ corresponds to classical random Fibonacci sequences studied in~\cite{janvresse2007}. The link made in \cite{janvresse2007} and \cite{rittaud2006} between random Fibonacci sequences and continued fraction expansion remains valid for $\lambda_{k}=2\cos(\pi/k)$ and corresponds to so-called Rosen continued fractions, a notion introduced by Rosen in~\cite{rosen1954}.
These values $\lambda_{k}$ are the only ones strictly smaller than $2$ for which the group (called {\em Hecke group}) of transformations of the hyperbolic half plane $\HH^2$ generated by the transformations $z\longmapsto -1/z$ and $z\longmapsto z+\lambda$ is discrete.

In the linear case, the random Fibonacci sequence is given by a product of random i.i.d. matrices, and the classical way to investigate the exponential growth is to apply Furstenberg's formula~\cite{furstenberg1963}. 
This is the method used by Viswanath, and the difficulty lies in the determination of Furstenberg's invariant measure. 
%For more general sequences ($p\in[0,1]$, $\lambda_k = 2\cos (\pi/k)$), it is even more difficult. 
In the non-linear case, the involved matrices are no more i.i.d., and the standard theory does not apply.
Our argument is completely different and relies on some reduction process which will be developed in details in the linear case. 
Surprisingly, our method works easier in the non-linear case, for which we only outline the main steps.
%When $k=3$ ($\lambda_3=1$), this gives an explicit form for Furstenberg's invariant measure (see~\cite{janvresse2007}).

\bigskip

Our main results are the following.
\begin{theo}
\label{MainTheorem}
Let $\lambda=\lambda_k=2\cos (\pi/k)$, for some integer $k\ge 3$.

For any $\rho\in[0, 1]$, there exists an explicit probability distribution $\nu_{k, \rho}$ on $\RR_+$ defined inductively on generalized Stern-Brocot intervals (see Section~\ref{nu_rho} and Figure~\ref{mesure}), which gives the exponential growth of random Fibonacci sequences:
\begin{itemize}
\item {\bf Linear case:} Fix ${F}_1>0$ and ${F}_2>0$.
For $p=0$, the sequence $(|F_n|)$ is periodic with period $k$.
For any $p\in ]0,1]$, 
$$ 
\dfrac{1}{n} \log |F_n| \tend{n}{\infty}\gamma_{p,\lambda_k} = \int_0^\infty \log x\, d\nu_{k, \rho} (x) >0 $$
almost-surely, where 
$$
 \rho \egdef \sqrt[k-1]{1-p_R}%\dfrac{p+p_R-2pp_R}{p+p_R-pp_R} 
$$ 
and $p_R$ is the unique positive solution of $$\left(1-\dfrac{px}{p+(1-p)x}\right)^{k-1} = 1-x.$$
\item {\bf Non-linear case:} 
%If $p\le 1/k$, $\limsup_{n} \sqrt[n]{\widetilde F_n}=1$ almost surely. 
For $p\in ]1/k, 1]$ and any choice of ${\tilde F}_1>0$ and ${\tilde F}_2>0$,
$$ 
\dfrac{1}{n} \log \widetilde F_n \tend{n}{\infty} \widetilde \gamma_{p,\lambda_k} = \int_0^\infty \log x\, d\nu_{k, \rho} (x) >0$$
almost-surely, where 
$$
 \rho \egdef  \sqrt[k-1]{1-p_R}%\dfrac{1-p}{1-p+pp_R}
$$
and $p_R$ is, for $p<1$, the unique positive solution of $$\left(1-\frac{px}{(1-p)+px}\right)^{k-1} = 1-x.$$
(For $p=1$, $p_R=1$.)
\end{itemize}
\end{theo}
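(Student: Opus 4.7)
My approach would be to study the Markov process of ratios $R_n \egdef F_{n+1}/F_n$ (respectively $\widetilde R_n \egdef \widetilde F_{n+1}/\widetilde F_n$ in the non-linear case). Since
$$ \frac{1}{n}\log|F_n| = \frac{1}{n}\sum_{i=1}^{n-1}\log|R_i| + O(1/n), $$
it suffices to prove that the empirical distribution of $|R_n|$ converges almost surely to the prescribed distribution $\nu_{k,\rho}$ on $\RR_+$, and that $\log$ is $\nu_{k,\rho}$-integrable. The feature of $\lambda_k = 2\cos(\pi/k)$ that drives everything is that the deterministic three-term recurrence $y_{n+2} = \lambda_k y_{n+1} - y_n$ is periodic of period $k$, which both explains the statement for $p=0$ in the linear case and forms the algebraic backbone of the reduction process below.

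The main technical step is the construction of $\nu_{k,\rho}$ through a reduction procedure on the random word of signs, generalizing the one introduced in \cite{janvresse2007} for $\lambda_3=1$. The idea is to identify certain finite blocks of signs (built around runs of $k-1$ consecutive ``$-$'') which can be collapsed without changing the generalized Stern--Brocot interval in which the ratio $R_n$ ultimately lands. Running the reduction backwards along the trajectory yields an irreducible word whose distribution is governed by a single parameter $\rho\in[0,1]$. The fixed-point equation $(1-px/(p+(1-p)x))^{k-1}=1-x$ for $p_R = 1-\rho^{k-1}$ appears as the balance equation for the probability that a typical suffix survives the reduction: conditionally on no immediate collapse at the current position, $k-1$ subsequent ``plus-driven'' transitions must also fail in order for the collapse to propagate, which is exactly what the left-hand side expresses. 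In the non-linear case the same calculation is carried out after incorporating the sign flips induced by the absolute value, which swaps the roles of $p$ and $1-p$ in the denominator and accounts for the different equation stated in the theorem.

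Once the reduction process is set up, $\nu_{k,\rho}$ is defined self-similarly on the generalized Stern--Brocot partition of $\RR_+$ (see Section~\ref{nu_rho}), and is checked to be invariant under the Markov kernel governing $R_n$. Almost-sure convergence then follows from Birkhoff's ergodic theorem on the shift space $\{+,-\}^{\NN}$, combined with uniqueness of the invariant measure and a tail estimate guaranteeing $\log$-integrability against $\nu_{k,\rho}$. Positivity of $\int\log x\, d\nu_{k,\rho}$ is a separate verification; for $k\ge 4$ it can essentially be read off from the fact that the all-plus trajectories already grow at rate $\log\lambda_k>0$, while $k=3$ is covered by \cite{janvresse2007}.

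I expect the main obstacle to lie in the non-linear case. The matrices encoding the recursion are no longer i.i.d., so Furstenberg's theorem does not apply directly and one must construct the invariant measure by hand. The threshold $p>1/k$ should emerge as precisely the condition that the reduction process is positively recurrent: each elementary reduction consumes $O(k)$ signs and is triggered at a rate proportional to $p$, so reductions keep pace with the production of new signs if and only if $kp>1$. Below this threshold, long runs of ``$-$'' dominate and prevent stabilization of the reduced word, in agreement with the absence of any statement for $p\le 1/k$. Making this heuristic into a rigorous positive-recurrence argument, and verifying that the formally defined $\nu_{k,\rho}$ is indeed the unique stationary law of $\widetilde R_n$, is where I would expect the proof to require the most care.
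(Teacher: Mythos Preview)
Your high-level architecture --- reduction of the sign word around runs of $k-1$ minuses, survival probability $p_R$ satisfying the stated fixed-point equation, block decomposition yielding an i.i.d.\ sequence indexed by $\rho$, and a self-similar measure $\nu_{k,\rho}$ on generalized Stern--Brocot intervals --- matches the paper's. But two of your steps do not go through as written.

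\textbf{The measure $\nu_{k,\rho}$ is not invariant for the one-step quotient chain.} You assert that $\nu_{k,\rho}$ is ``checked to be invariant under the Markov kernel governing $R_n$'' and then invoke Birkhoff. In the linear case $R_n=F_{n+1}/F_n$ takes negative values with positive probability, so a measure on $\RR_+$ cannot be stationary for it; and $|R_n|$ is not Markov. What $\nu_{k,\rho}$ \emph{is} invariant for is the \emph{block-level} chain $Q_\ell$ obtained by reading off quotients only at the times when a surviving block $RL^j$ is completed. Passing from ergodic averages of $\log Q_\ell$ to $\frac{1}{n}\sum_i\log|R_i|$ is the real content of the paper's Section~\ref{Sec:Coupling}: one embeds $(X_n)$ in a two-sided i.i.d.\ sequence $(X_n^*)$, defines the reduction of the left-infinite past, extracts a genuinely stationary process $(q_n^*)$ with marginal $\nu_{k,\rho}$, and then proves $\frac{1}{N}\sum|\log q_n^*-\log|q_n||\to 0$ almost surely via a coupling estimate (Proposition~\ref{comparison}). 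Your outline skips this bridge entirely.

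\textbf{Your positivity argument is not valid.} Saying that ``all-plus trajectories already grow at rate $\log\lambda_k>0$'' tells you nothing about the sign of $\int\log x\,d\nu_{k,\rho}$: the all-plus trajectory corresponds to the single point $f_0$-fixed point, which has $\nu_{k,\rho}$-measure zero, and the measure puts mass on arbitrarily small quotients. The paper's proof (Lemma~\ref{lemme:nu_rho}) is a symmetry inequality: one shows $\nu_{k,\rho}([t,\infty))\ge\nu_{k,\rho}([0,1/t])$ for every $t>0$, with strict inequality somewhere, by an induction on the Stern--Brocot depth that exploits the involution $x\mapsto 1/x$ on endpoints (Lemma~\ref{inverse}). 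This requires $\rho<1$ and is where the hypothesis $p>0$ (resp.\ $p>1/k$) is actually used.

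A minor point: you anticipate the non-linear case to be the hard one. In fact the paper finds it \emph{easier}: in that case the reduction relation is $RL^{k-2}L'=\Id$ with no sign change, so the letter-flipping in Step~1 of the reduction disappears, and the threshold $p>1/k$ follows from a straightforward law-of-large-numbers count (each deletion removes one $R$ and $k-1$ $L$'s). The delicate part is the linear case, precisely because $RL^{k-1}$ flips the next letter and because quotients can be negative.
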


\begin{figure}[h]
	\begin{center}
	\input{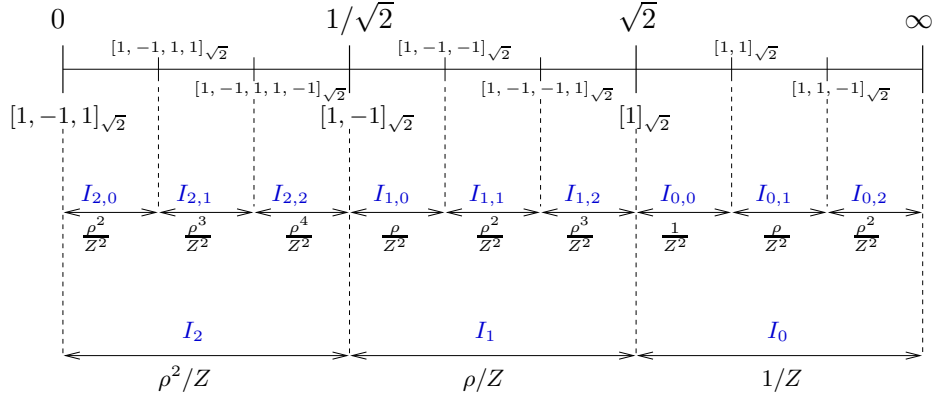}
	\end{center}
\caption{The measure $\nu_{k, \rho}$ on generalized Stern-Brocot intervals of rank 1 and 2 in the case $k=4$ ($\lambda_k=\sqrt{2}$). The normalizing constant $Z$ is given by $1+\rho+\rho^2$. The endpoints of the intervals are specified by their $\sqrt{2}$-continued fraction expansion.}
\label{mesure}
\end{figure} 

The behavior of $(\widetilde F_{n})$ when $p\le 1/k$ strongly depends on the choice of the initial values. This phenomenon was not perceived in~\cite{janvresse2007}, in which the initial values were set to $\widetilde F_{1}=\widetilde F_{2}=1$. However, we have the general result:

\begin{theo}
\label{Theorem2}
Let $\lambda=\lambda_k=2\cos (\pi/k)$, for some integer $k\ge 3$.
In the non-linear case, for $0\le p\le 1/k$, there exists almost-surely a bounded subsequence $(\widetilde F_{n_j})$ of $(\widetilde F_{n})$ with density $(1-kp)$. 
\end{theo}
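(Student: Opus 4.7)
The approach is to extract the desired bounded subsequence from the reduction mechanism underlying the proof of Theorem~\ref{MainTheorem} in the non-linear case. The crucial feature of $\lambda_k = 2\cos(\pi/k)$ is the Hecke-group relation $(ST)^k = \Id$ with $S(z) = -1/z$ and $T(z) = z + \lambda$; combinatorially this says that $k-1$ consecutive $-$ steps of the non-linear recurrence are enough to cancel the effect of a single $+$ step. I would formalise this by attaching to each configuration $(\widetilde F_n, \widetilde F_{n+1})$ an integer-valued \emph{depth} $C_n \in \ZZ_{\ge 0}$, initialised at $C_1 = 0$ and designed so that $C_n = 0$ precisely when the pair $(\widetilde F_n, \widetilde F_{n+1})$ lies in a fixed compact set $K \subset \RR_+ \times \RR_+$ depending only on the initial data. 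The central reduction lemma to establish is that $C_n$ obeys the Lindley recursion
\begin{equation*}
C_n = \max\bigl(C_{n-1} + \xi_n,\, 0\bigr), \qquad \PP(\xi_n = -1) = 1-p,\quad \PP(\xi_n = k-1) = p.
\end{equation*}

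Granting this, set $S_n \egdef \xi_1 + \cdots + \xi_n$ (so $S_0 = 0$) and let $(n_j)$ enumerate the strict descending ladder times of $S$. The iid increments have mean $(k-1)p - (1-p) = kp - 1 \le 0$ exactly when $p \le 1/k$; moreover the only negative increment equals $-1$, so $S$ decreases one unit at a time. Consequently every descending ladder height equals $1$, $|\{j : n_j \le N\}| = -\min_{m \le N} S_m$, and a strict new minimum at time $n$ is equivalent to $\xi_n = -1$ together with $C_{n-1} = 0$. The strong law then gives
\begin{equation*}
\frac{|\{j : n_j \le N\}|}{N} = \frac{-\min_{m \le N} S_m}{N} \tend{N}{\infty} |kp-1| = 1-kp \quad \text{a.s.}
\end{equation*}
(for $p = 1/k$ this is just the standard $o(N)$ estimate on the extremes of a centred walk, consistent with $1-kp = 0$). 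At each $n_j$ one has $C_{n_j} = 0$, hence $(\widetilde F_{n_j}, \widetilde F_{n_j+1}) \in K$, so $(\widetilde F_{n_j})$ is a bounded subsequence of $(\widetilde F_n)$ of density $1-kp$. The boundary case $p = 0$ is recovered by observing that $C_n \equiv 0$ forces the entire deterministic all-minus sequence to stay in $K$, in agreement with the density $1 - k \cdot 0 = 1$.

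The main obstacle is not the random-walk part, which is classical, but the Lindley recursion itself: one has to verify that every $+$ step adds exactly $k-1$ to the depth, and that every $-$ step at positive depth removes exactly one. This requires a case analysis of the non-linear map on the generalized Stern-Brocot intervals of Figure~\ref{mesure}, and uses in an essential way the period-$k$ identity in the Hecke group, so that neither a shorter sequence of $-$ steps nor a longer one can neutralise a $+$ step. This is the same combinatorial bookkeeping that drives the non-linear half of Theorem~\ref{MainTheorem}, and once it is in place the density computation above closes the argument.
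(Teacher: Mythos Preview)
Your random-walk repackaging is essentially the paper's argument. Your strict descending ladder times of $S_n$ coincide with the paper's times $n_j$ at which the non-linear reduced word $\tred(X_3\ldots X_{n_j})$ equals $L^j$, and your density computation $-\min_{m\le N}S_m/N\to 1-kp$ is equivalent to the paper's $n_j=j+kd_j$ together with $d_j/n_j\to p$ via the law of large numbers.

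There is, however, a genuine gap, and you have misidentified where it lies. You postulate a compact set $K$ with $(\widetilde F_n,\widetilde F_{n+1})\in K$ whenever $C_n=0$, but you never establish that such a $K$ exists. If $C_n$ is given its natural combinatorial meaning (the number of further $L$'s needed to reduce the current word to a pure power of $L$), then the Lindley recursion is immediate from the single identity $RL^{k-2}L'=\Id$; no case analysis on Stern-Brocot intervals is required, and this is \emph{not} the main obstacle. What \emph{is} non-trivial is that the configurations visited at depth $0$ are bounded. At the $j$-th ladder time the reduced word is $L^j$, so the pair $(\widetilde F_{n_j-1},\widetilde F_{n_j})$ is exactly the $j$-th term of the \emph{deterministic} recursion $G_{m+1}=|\lambda G_m-G_{m-1}|$ started from the same initial data. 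Boundedness of that deterministic sequence is a separate analytical fact --- the paper isolates it as Proposition~\ref{bounded} and proves it by a geometric argument (interpreting successive pairs as the abscissae of two points on a circle of non-increasing radius, separated by the angle $\pi/k$). Your sketch does not address this step; the Hecke-group identity you invoke yields the reduction rule, not the compactness of $K$. (Minor point: the ``precisely when'' in your characterisation of $\{C_n=0\}$ is too strong --- the configuration could land in $K$ accidentally at positive depth; only the implication $C_n=0\Rightarrow(\widetilde F_n,\widetilde F_{n+1})\in K$ is needed and holds.)
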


The bounded subsequence in Theorem~\ref{Theorem2} satisfies $\widetilde F_{n_{j+1}} = |\lambda \widetilde F_{n_{j}}-\widetilde F_{n_{j-1}}|$ for any $j$, which corresponds to the non-linear case for $p=0$. 
We therefore concentrate on this case in Section~\ref{Sec:p=0} and provide necessary and sufficient conditions for $(\widetilde F_{n})$ to be ultimately periodic (see Proposition~\ref{p=0}). Moreover, we prove that $\widetilde F_{n}$ may decrease exponentially fast to $0$, but that the exponent depends on the ratio $\widetilde F_0/\widetilde F_1$. 

The critical value $1/k$ in the non-linear case is to be compared with the results obtained in the study of $\EE[\widetilde F_n]$ (see~\cite{janvresse2008}): it is proved that $\EE[\widetilde F_n]$ increases exponentially fast as soon as $p>(2-\lambda_k)/4$.

\bigskip
When $\lambda\ge 2$, the linear case and the non-linear case are essentially the same. The study of the exponential growth of the sequence $(F_n)$ is much simpler, and we obtain the following result.

\begin{theo}\label{th:case_2}
Let $\lambda\ge 2$ and $0<p\le 1$. 
For any choice of $F_1>0$ and $F_2>0$,
$$ 
\dfrac{1}{n} \log |F_n| \tend{n}{\infty} \gamma_{p,\lambda} = \int_0^\infty \log x\, d\mu_{p, \lambda} (x)>0\quad\mbox{a.s.}, $$
where $\mu_{p, \lambda}$ is an explicit probability measure supported on $\left[B, \lambda+\frac{1}{B}\right]$, with $B\egdef\dfrac{\lambda+\sqrt{\lambda^2-4}}{2}$ (see Section~\ref{Sec:lambda2} and Figure~\ref{mesure2}).
\end{theo}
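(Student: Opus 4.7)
The plan is to exploit the fact that for $\lambda\ge 2$, the projective dynamics of the Fibonacci recurrence admits a compact, attracting, forward-invariant interval on which the signs of $(F_n)$ stabilize, turning the problem into a Birkhoff averaging for a well-behaved Markov chain.

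First I would introduce the two M\"obius transformations
$$f(x)\egdef \lambda+\frac{1}{x},\qquad g(x)\egdef \lambda-\frac{1}{x},$$
and the ratios $R_n\egdef F_{n+1}/F_n$ (on $\RR\cup\{\infty\}$), which form a Markov chain with $R_{n+1}=f(R_n)$ with probability $p$ and $R_{n+1}=g(R_n)$ with probability $1-p$. Using that $B$ is the positive fixed point of $g$ together with the identity $B+1/B=\lambda$, a direct computation shows that both $f$ and $g$ send the interval $I\egdef[B,\lambda+1/B]$ into itself, and that on $I$ their derivatives have modulus $1/x^2\le 1/B^2\le 1$, strictly less than $1$ for $\lambda>2$. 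In particular, once $R_n$ enters $I$ at some time $n_0$, it stays there forever and all $F_n$ with $n\ge n_0$ have the same sign, so $|F_n|=\pm F_n$ with a fixed global sign and $\log|F_{n+1}|-\log|F_n|=\log R_n$.

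Next I would show that $R_n$ enters $I$ almost surely in finite time. Since $F_1, F_2>0$, we have $R_1>0$; a short case analysis (distinguishing $R_n$ above $\lambda+1/B$, in $(0,B)$, or negative), combined with the observation that any $f$-step from $R>\lambda+1/B$ already lands in $I$ and with the Markov property, yields entry into $I$ in a geometrically bounded number of steps almost surely. Once inside $I$, $(R_n)$ is a Markov chain driven by the contracting iterated function system $\{f,g\}$ with weights $(p,1-p)$. Classical results on such random iterations (Dubins--Freedman, Barnsley--Elton) give a unique invariant probability measure $\mu_{p,\lambda}$ on $I$ and geometric convergence in distribution, independent of the starting point. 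This is the measure constructed explicitly in Section~\ref{Sec:lambda2} and depicted in Figure~\ref{mesure2}; the identification reduces to checking the fixed-point equation $\mu_{p,\lambda}=p\cdot f_{\ast}\mu_{p,\lambda}+(1-p)g_{\ast}\mu_{p,\lambda}$ against the explicit formula.

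Applying Birkhoff's ergodic theorem to the ergodic chain $(R_n)$ and to the bounded function $x\mapsto\log x$ on $I$, then combining with the telescoping identity $\log|F_n|=\log|F_{n_0}|+\sum_{k=n_0}^{n-1}\log R_k$, yields
$$\frac{1}{n}\log|F_n|\;\tend{n}{\infty}\;\int_I \log x\,d\mu_{p,\lambda}(x)\quad\mbox{a.s.}$$
Strict positivity of the limit is then immediate: $I\subset[1,\infty)$ forces the integral to be nonnegative, and since $f$ and $g$ have distinct fixed points inside $I$, the measure $\mu_{p,\lambda}$ cannot be concentrated at $B=1$ (the only zero of $\log x$ on $I$, occurring only when $\lambda=2$), so the integral is strictly positive. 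The main obstacle is the boundary case $\lambda=2$, where $B=1$ and $f,g$ are only neutrally contracting at the common endpoint of $I=[1,3]$: there the unique ergodicity and Birkhoff averaging require a finer argument, for instance a contraction-on-average estimate in a Wasserstein-type metric, rather than the straightforward strictly-contracting IFS theory available for $\lambda>2$.
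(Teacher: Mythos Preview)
Your approach is essentially the paper's: both pass to the ratio Markov chain $Q_n=F_{n+1}/F_n$, show it is eventually trapped in the forward-invariant interval $[B,\lambda+1/B]$, identify the unique invariant measure there, and apply an ergodic/LLN theorem for the chain. Where you invoke contracting-IFS theory abstractly, the paper builds $\mu_{p,\lambda}$ explicitly via nested cylinder intervals $I_W$ indexed by words $W\in\{R,L\}^*$ and proves these shrink to points.

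The one substantive point is the boundary case $\lambda=2$ that you honestly flag as a gap. The paper does not leave this to a Wasserstein-type estimate but closes it by a direct computation: for $\lambda=2$ one has $I_{L^j}=[1,(j+1)/j]$ of length $1/j$, while $f_L\circ f_R$ and $f_R\circ f_R$ have derivative of modulus at most $1/(2B+1)^2=1/9$ on $I$; combining these two facts forces every cylinder $I_{W_n\cdots W_1}$ to shrink (either $W$ contains many consecutive $L$'s, or it contains many $R$'s interspersed, each giving a factor $1/9$). This is precisely the ``finer argument'' you anticipated, and with it your outline becomes a complete proof. Your positivity argument (support in $[1,\infty)$, measure not a Dirac at $1$) is also what the paper implicitly uses.
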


\subsection*{Road map}

The detailed proof of Theorem~\ref{MainTheorem} in the linear case is given in Sections~\ref{reductionSection}-\ref{Section:positivity}: Section~\ref{reductionSection} explains the reduction process on which our method relies. In Section~\ref{Sec:SternBrocot}, we introduce the generalized Stern-Brocot intervals in connection with the expansion of real numbers in Rosen continued fractions, which enables us to study the reduced sequence associated to $(F_n)$. In Section~\ref{Sec:Coupling}, we come back to the original sequence $(F_n)$, and, using a coupling argument, we prove that its exponential growth is given by the integral formula. Then we prove the positivity of the integral in Section~\ref{Section:positivity}.

The proof for the non-linear case, $p>1/k$, works with the same arguments (in fact it is even easier), and the minor changes are given at the beginning of Section~\ref{Section:non-linear}. The end of this section is devoted to the proof of Theorem~\ref{Theorem2}. 

The proof of Theorem~\ref{th:case_2} (for $\lambda\ge2$) is given in Section~\ref{Sec:lambda2}.

In Section~\ref{Sec:croissance_p}, we study the variations of $\gamma_{p,\lambda}$ and $\widetilde\gamma_{p,\lambda}$ with~$p$.
Conjectures concerning variations with $\lambda$ are given in Section~\ref{Sec:variations_k}.

Connections with Embree-Trefethen's paper~\cite{embree1999}, who study a slight modification of our linear random Fibonacci sequences when $p=1/2$, are discussed in Section~\ref{Sec:Embree-Trefethen}.

\section{Reduction: The linear case}
\label{reductionSection}

The sequence $(F_n)_{n\ge1}$ can be coded by a sequence $(X_n)_{n\ge 3}$ of i.i.d. random variables taking values in the alphabet $\{R, L\}$ with probability $(p, 1-p)$. 
Each $R$ corresponds to choosing the $+$ sign and each $L$ corresponds to choosing the $-$ sign, so that both can be interpreted as the right multiplication of $(F_{n-1}, F_n)$ by one of the following matrices:
\begin{equation}
\label{matrices}
L \egdef 
\begin{pmatrix}
0 & -1\\
1 & \lambda
\end{pmatrix}
\qquad \mbox{and}\qquad 
R \egdef 
\begin{pmatrix}
0 & 1\\
1 & \lambda
\end{pmatrix}.
\end{equation}
According to the context, we will interpret any finite sequence of $R$'s and $L$'s as the corresponding product of matrices. 
Therefore, for all $n\ge3$,
$$(F_{n-1}, F_n) = (F_1, F_2)X_3\ldots X_n.$$

Our method relies on a reduction process of the sequence $(X_n)$ based on some relations satisfied by the matrices $R$ and $L$.
Recalling the definition of $\lambda=2\cos(\pi/k)$, we can write the matrix $L$ as the product $P^{-1}DP$, where 
$$ D\egdef\begin{pmatrix}
e^{i\pi/k} & 0          \\
0          & e^{-i\pi/k}
\end{pmatrix},
\quad
P\egdef\begin{pmatrix}
1   &  e^{i\pi/k}          \\
1   &  e^{-i\pi/k}
\end{pmatrix},
\quad\mbox{and }
P^{-1}=\dfrac{1}{2i\sin(\pi/k)}\begin{pmatrix}
-e^{-i\pi/k}   &  e^{i\pi/k}          \\
1              &  -1
\end{pmatrix}.
$$
As a consequence, we get that for any integer $j$,
\begin{equation}
 \label{PowersOfL}
 L^j =\dfrac{1}{\sin(\pi/k)}\begin{pmatrix}
-\sin\frac{(j-1)\pi}{k}  &  -\sin\frac{j\pi}{k}      \\
 \sin\frac{j\pi}{k}      &  \sin\frac{(j+1)\pi}{k}
\end{pmatrix},
\end{equation}
and
\begin{equation}
 \label{RtimesPowersOfL}
 RL^j =\dfrac{1}{\sin(\pi/k)}\begin{pmatrix}
 \sin\frac{j\pi}{k}      &  \sin\frac{(j+1)\pi}{k}      \\
 \sin\frac{(j+1)\pi}{k}  &  \sin\frac{(j+2)\pi}{k}
\end{pmatrix}.
\end{equation}
In particular, for $j=k-1$ we get the following relations satisfied by $R$ and $L$, on which is based our reduction process:
\begin{equation}
\label{reduction}
RL^{k-1} = 
\begin{pmatrix}
1 & 0\\
0 & -1
\end{pmatrix},
\quad
 RL^{k-1}R = -L \quad\mbox{and}\quad RL^{k-1}L=-R.
\end{equation}
Moreover, $L^k = -\Id$.

\medskip
We deduce from \eqref{reduction} that, in products of $R$'s and $L$'s, we can suppress all patterns $RL^{k-1}$ provided we flip the next letter. This will only affect the sign of the resulting matrix.

To formalize the reduction process, we associate to each finite sequence $x=x_3\ldots x_n\in\{R,L\}^{n-2}$ a (generally) shorter word $\red(x)=y_3\cdots y_{j}$ by the following induction. 
If $n=3$, $y_3 = x_3$. 
If $n>3$, $\red(x_3\ldots x_n)$ is deduced from $\red(x_3\ldots x_{n-1})$ in two steps. 

{\bf Step 1}: Add one letter ($R$ or $L$, see below) to the end of $\red(x_3\ldots x_{n-1})$. 

{\bf Step 2}: If the new word ends with the suffix $RL^{k-1}$, remove this suffix. 

The letter which is added in step 1 depends on what happened when constructing $\red(x_3\ldots x_{n-1})$: 
\begin{itemize}
\item If $\red(x_3\ldots x_{n-1})$ was simply obtained by appending one letter, we add $x_{n}$ to the end of $\red(x_3\ldots x_{n-1})$. 
\item Otherwise, we had removed the suffix $RL^{k-1}$ when constructing $\red(x_3\ldots x_{n-1})$; we then add $\overline{x_n}$ to the end of $\red(x_3\ldots x_{n-1})$, where $\overline{R}\egdef L$ and $\overline{L}\egdef R$. 
\end{itemize}
Example: Let $x=RLRLLLRLL$ and $k=4$. Then, the reduced sequence is given by $\red(x) = R$. 

\medskip
Observe that by construction, $\red(x)$ never contains the pattern $RL^{k-1}$. %Therefore it can be written as $L^\ell w$, where $w$ is identified with a finite path in $\tR$. Because of~\eqref{reduction}, $x=\pm \red(x)$. Thus, 
Let us introduce the \emph{reduced random Fibonacci sequence} $(F_n^r)$ defined by
$$(F_{n-1}^r,F_n^r) \egdef  (F_1,F_2) \red(X_3\ldots X_n).$$
Note that we have $ F_n=\pm F_n^r$ for all $n$.
From now on, we will therefore concentrate our study on the reduced sequence $\red(X_3\ldots X_n)$. We will denote its length by $j(n)$ and its last letter by $Y(n)$.

The proof of Lemma~2.1 in~\cite{janvresse2007} can be directly adapted to prove the following lemma.
\begin{lemma}
\label{Survival}
We denote by $|W|_R$ the number of $R$'s in the word $W$. We have
\begin{equation}
\label{RnR}
|\red(X_3\ldots X_n)|_R\tend{n}{\infty}+\infty\qquad\mbox{a.s.}
\end{equation}
In particular, the length $j(n)$ of $\red(X_3\ldots X_n)$ satisfies
$$
j(n) \tend{n}{\infty} +\infty.\qquad\mbox{a.s.}
$$
\end{lemma}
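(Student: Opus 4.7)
Since $j(n) \ge |\red(X_3\ldots X_n)|_R$, the assertion about $j(n)$ follows from \eqref{RnR}, so it suffices to prove the latter. The plan is to follow the strategy of Lemma~2.1 in \cite{janvresse2007}, adapting the $k=3$ argument to general $k\ge 3$.

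The starting observation is that, since the forbidden suffix $RL^{k-1}$ ends in $L$, a reduction at step $n$ can only occur when the letter appended at step $n$ is an $L$, and each such reduction removes exactly one $R$ from the word. Consequently the increments $\Delta_n := |W_n|_R - |W_{n-1}|_R$, where $W_n := \red(X_3\ldots X_n)$, take values in $\{-1, 0, +1\}$, with $\Delta_n = +1$ exactly when the appended letter is an $R$. The problem thus reduces to analyzing a walk on $\NN$ driven by the i.i.d.\ inputs $(X_n)$ together with the reduction-flag sequence $(\epsilon_n)$.

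The key deterministic input is the following block estimate: if $|W_{n-1}|_R = N$ and $X_n = X_{n+1} = \ldots = X_{n+L-1} = R$, then the $R$-count increases over this block by at least $L - 2N - 1$. This is verified by direct inspection of the dynamics. When $\epsilon_{n-1} = 0$, every input of the block is appended as $R$ with no reduction, and the change is $+L$. When $\epsilon_{n-1} = 1$, the first input is flipped to $L$ and may or may not trigger a reduction; as long as reductions keep occurring, $\epsilon$ remains $1$ and each successive input is flipped to $L$, each reduction stripping one $R$. This can happen at most $r \le N$ times, since the $R$-count cannot become negative; once a non-reducing $L$ is appended, $\epsilon$ returns to $0$ and the remaining $L-r-1$ inputs of the block are appended unflipped as $R$'s. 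The net change is $-r + (L - r - 1) = L - 2r - 1 \ge L - 2N - 1$. In particular, choosing $L = 2N+2$ yields $|W_{n+L-1}|_R \ge N+1$. Since $p > 0$, Borel--Cantelli applied to disjoint length-$L$ blocks of the i.i.d.\ sequence $(X_n)$ produces $R$-blocks of every prescribed length infinitely often almost surely, and consequently $(|W_n|_R)$ exceeds every fixed level at arbitrarily large times.

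The remaining, and main, technical step is to upgrade ``exceeds every level infinitely often'' to $|W_n|_R\to\infty$ almost surely. For this one views $(W_n,\epsilon_n)$ as a Markov chain on a countable state space; the deterministic escape estimate above, combined with $p > 0$, provides a uniform lower bound on the probability of escape from any set of states with bounded $R$-count, which implies transience of the chain. Transience forces each state to be visited only finitely often, so every state with $|W|_R \le M$ is visited only finitely often, whence $|W_n|_R \to \infty$ almost surely. This transience argument is precisely the point at which Lemma~2.1 of \cite{janvresse2007} (where it is carried out for $k=3$) transposes with only notational changes to the present setting, and constitutes the main obstacle of the proof.
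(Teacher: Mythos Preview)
The paper gives no self-contained proof; it simply states that the argument of Lemma~2.1 in \cite{janvresse2007} adapts directly. Your proposal takes the same route---deferring the core transience step to that reference---while adding a correct block estimate: a run of $2N+2$ input-$R$'s forces the $R$-count from $N$ up to at least $N+1$, and Borel--Cantelli then gives $\limsup_n|W_n|_R=\infty$ almost surely.

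One point in your transience sketch is not right as written, however. You argue that transience of the chain $(W_n,\epsilon_n)$ yields ``every state with $|W|_R\le M$ is visited only finitely often, whence $|W_n|_R\to\infty$.'' But the set of such states is infinite: the reduced word may begin with arbitrarily many leading $L$'s, and these are never removed (a deletion always strips a block $RL^{k-1}$, which starts with $R$, so letters to the left of every $R$ are untouched). Transience of individual states does not force an infinite collection of states to be abandoned. The standard repair is to project onto the Markov chain that records only the suffix of $W_n$ after its leading $L$'s, together with the flag $\epsilon_n$; the leading $L$'s do not affect this dynamics, and on the quotient the level sets $\{|W|_R\le M\}$ are genuinely finite (at most $2\sum_{m\le M}(k-1)^m$ states), so transience there does give the conclusion. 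Your line that the escape estimate ``implies transience of the chain'' is likewise too quick: showing the chain almost surely leaves a finite set does not by itself distinguish transient from null-recurrent behaviour. Since you explicitly hand the rigorous argument off to \cite{janvresse2007}, exactly as the paper does, these are gaps in your sketch rather than in the overall strategy, which matches the paper's.
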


\subsection{Survival probability of an $R$}
\label{section:survival}
We say that the last letter of $\red(X_3\ldots X_n)$ \emph{survives} if, for all $m\ge n$, $j(m)\ge j(n)$. In other words, this letter survives if it is never removed during the subsequent steps of the reduction.
By construction, the survival of the last letter $Y(n)$ of $\red(X_3\ldots X_n)$ only depends on its own value and the future $X_{n+1},X_{n+2}\ldots$. Let
$$ 
p_R\egdef\PP\Bigl( Y(n)\mbox{ survives }\Big|Y(n)=R \mbox{ has been appended at time }n\Bigr).
$$
A consequence of Lemma~\ref{Survival} is that $p_R >0$. We now want to express $p_R$ as a function of $p$. 

Observe that $Y(n)=R$ survives if and only if, after the subsequent steps of the reduction, it is followed by $L^jR$ where $0\le j\le k-2$, and the latter $R$ survives. Recall that the probability of appending an $R$ after a deletion of the pattern $RL^{k-1}$ is $1-p$, whereas it is equal to $p$ if it does not follow a deletion. 
Assume that $Y(n)=R$  has been appended at time $n$. We want to compute the probability for this $R$ to survive and to be followed by $L^jR$ ($0\le j\le k-2$) after the reduction. This happens with probability
\begin{eqnarray*}
p_j  & \egdef & 
\PP\left( R \mbox{ be followed by }
\underbrace{\Bigl(\!\!\overbrace{[\mbox{\sout{$R\ldots$}}]}^{\substack{\ell\ge 0 \\ \mbox{\scriptsize\ deletions}}}\!\!\! L\Bigr)\ \ldots\ \Bigl([\mbox{\sout{$R\ldots$}}] \,L\Bigr)}_{j\mbox{\scriptsize\ times}}
\quad [\mbox{\sout{$R\ldots$}}]\,  \overbrace{R}^{\mbox{\scriptsize survives}}\right) \\
& = &\left( (1-p)+ p\sum_{\ell\ge1} (1-p_R)^\ell (1-p)^{\ell-1}p \right)^j p\sum_{\ell\ge0} (1-p_R)^\ell (1-p)^{\ell} p_R\\
& = &  \left(1-\dfrac{pp_R}{p+(1-p)p_R}\right)^j \dfrac{pp_R}{p+(1-p)p_R}.
\end{eqnarray*}
Writing $p_R = \sum_{j=0}^{k-2}p_j$, we get that $p_R$ is a solution of the equation
\begin{equation}
\label{survival-pr}
 g(x)=0,\quad\mbox{where }g(x)\egdef 1- \dfrac{px}{p+(1-p)x} - (1-x)^{1/(k-1)}.
\end{equation}
Observe that $g(0)=0$, and that $g$ is strictly convex. Therefore there exists at most one $x>0$ satisfying $g(x)=0$, and it follows that $p_R$ is the unique positive solution of~\eqref{survival-pr}. 
% It will be also useful to note the following inequality:
% \begin{lemma}
%  \label{pR>p}
%  The survival probability of an $R$ satisfies $p_R>p$.
% \end{lemma}
% \begin{proof}
% Since $k\ge3$, we have
%  $$\dfrac{2}{2-p}\left(\dfrac{2(1-p)}{2-p}\right)^{k-2}\ <\  \dfrac{2}{2-p}\left(\dfrac{2(1-p)}{2-p}\right)\  =\  \dfrac{4-4p}{4-4p+p^2} \ <\  1. $$
% Hence, multiplying by $(1-p)$, we get
% $$ 
% \left(\dfrac{2(1-p)}{2-p}\right)^{k-1}\ <\ 1-p,
% $$
% which is equivalent to $g(p)<0$, and this implies the inequality $p<p_R$. 
% \end{proof}

\subsection{Distribution law of surviving letters}
A consequence of Lemma~\ref{Survival} is that the sequence of surviving letters 
$$ (S_j)_{j\ge3} = \lim_{n\to\infty} \red(X_3\ldots X_n) $$
is well defined and can be written as the concatenation of a certain number $s\ge0$ of starting $L$'s, followed by infinitely many blocks: 
$$
S_1S_2 \ldots = L^s B_1B_2\ldots
$$
where $s\ge 0$ and, for all $\ell\ge1$, $B_\ell\in\{R, RL, \ldots, RL^{k-2}\}$.
This block decomposition will play a central role in our analysis. 

We deduce from Section~\ref{section:survival} the probability distribution of this sequence of blocks:
\begin{lemma}
 \label{law}
The blocks $(B_\ell)_{\ell\ge1}$ are i.i.d. with common distribution law $\PP_{\rho}$ defined as follows
\begin{equation}
 \label{defPrho}
\PP_{\rho}(B_1 = RL^j)\egdef \frac{\rho^j}{\sum_{m=0}^{k-2}\rho^m}\ ,\quad 0\le j\le k-2, 
\end{equation}
where $\rho \egdef 1-\dfrac{pp_R}{p+(1-p)p_R}$ and $p_R$ is the unique positive solution of~\eqref{survival-pr}.
\end{lemma}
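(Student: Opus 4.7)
The proof naturally splits into deriving the marginal law of a single block and then establishing independence.

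\textbf{Marginal law.} The key observation is that the quantity $p_j$ computed in Section~\ref{section:survival} is exactly the joint probability that an $R$ appended at a given step both survives and that the block it initiates in the reduced (surviving) sequence equals $RL^j$. Setting $\rho\egdef 1-\dfrac{pp_R}{p+(1-p)p_R}$, the computation of Section~\ref{section:survival} rewrites as $p_j=\rho^j(1-\rho)$, and summing over $j\in\{0,\dots,k-2\}$ gives $p_R=1-\rho^{k-1}$, whence $\rho=\sqrt[k-1]{1-p_R}$ as asserted. Dividing $p_j$ by $p_R$ identifies the conditional law of a single block as $\rho^j/\sum_{m=0}^{k-2}\rho^m$, which is the announced $\PP_\rho(B_1=RL^j)$.

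\textbf{Independence.} I would argue by regeneration. Let $\tau_\ell$ denote the (random) step at which the initial $R$ of block $B_\ell$ is appended to the reduced word. Immediately after $\tau_\ell$, the reduction is in a canonical state: the last operation was a non-flipped appending of $R$, so the next step consists of an un-flipped appending based on $X_{\tau_\ell+1}$, and the subsequent dynamics---the blocks $B_\ell,B_{\ell+1},\dots$ together with the gaps $\tau_{\ell+1}-\tau_\ell,\tau_{\ell+2}-\tau_{\ell+1},\dots$---are a deterministic function of $(X_n)_{n>\tau_\ell}$ alone, produced by exactly the same rule as for $B_1,B_2,\dots$ starting from the first surviving $R$. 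Since $(X_n)$ is i.i.d., the conditional law of $(B_\ell,B_{\ell+1},\dots)$ given the past coincides with the unconditional law of $(B_1,B_2,\dots)$, which yields the i.i.d.\ property by induction on $\ell$.

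\textbf{Main obstacle.} Because whether a given $R$ survives depends on the entire future, $\tau_\ell$ is not a stopping time for the natural filtration of $(X_n)$, so one cannot invoke the strong Markov property off the shelf. I would bypass this by summing, exactly as in the geometric series of Section~\ref{section:survival}, over the (a.s.\ finitely many, by Lemma~\ref{Survival}) failed attempts that separate consecutive surviving $R$'s, and checking by a direct computation that the joint law $\PP(B_1=\mathbf{b}_1,\dots,B_\ell=\mathbf{b}_\ell)$ factors as the product of the marginals already identified, which is precisely the content of the lemma.
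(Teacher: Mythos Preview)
Your approach is correct and aligned with the paper's, but the paper dispatches your ``main obstacle'' in one stroke rather than by the direct computation you sketch. The paper observes that the event $E_n=\{$an $R$ is appended at time $n$ and survives$\}$ factors as the intersection of ``an $R$ is appended at time $n$'', which is $\sigma(X_i,\,i\le n)$-measurable, and ``if an $R$ is appended at time $n$, it survives'', which is $\sigma(X_i,\,i>n)$-measurable. Hence conditioning on $E_n$ preserves the independence of past and future, and the i.i.d.\ block structure follows immediately---no summing over failed attempts is needed, and the fact that $\tau_\ell$ is not a stopping time becomes irrelevant. Incidentally, your claim that the $R$ at $\tau_\ell$ is a ``non-flipped appending'' is not quite right (it may be a flipped $L$), but this is harmless: what matters is only that appending an $R$ never triggers a deletion, so the next letter is indeed $X_{\tau_\ell+1}$ unflipped.
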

In \cite{janvresse2007}, where the case $k=3$ was studied, we used the parameter $\alpha = 1/(1+\rho)$ instead of $\rho$.

Observe that $\rho=\left( (1-p)+ p\sum_{\ell\ge1} (1-p_R)^\ell (1-p)^{\ell-1}p \right)$ can be interpreted as the probability that the sequence of surviving letters starts with an $L$. Since an $R$ does not survive if it is followed by $k-1$ $L$'s, this explains why the probability $1-p_R$ that an $R$ does not survive is equal to $\rho^{k-1}$.

\begin{proof}
Observe that the event $E_n:=$``$Y(n)=R$ has been appended at time $n$ and survives'' is the intersection of the two events ``$Y(n)=R$ has been appended at time $n$'', which is measurable with respect to $\sigma(X_i,\ i\le n)$, and ``If $Y(n)=R$ has been appended at time $n$, then this $R$ survives'', which is measurable with respect to $\sigma(X_i,\ i>n)$.
It follows that, conditioned on $E_n$, $\sigma(X_i,\ i\le n)$ and $\sigma(X_i,\ i> n)$ remain independent. 
Thus the blocks in the sequence of surviving letters appear independently, and their distribution is given by 
$$\PP_{\rho}(B_1 = RL^j)=\frac{p_j}{p_R} = \frac{\rho^j}{\sum_{m=0}^{k-2}\rho^m}\ ,\quad 0\le j\le k-2.$$
\end{proof}

\section{Rosen continued fractions and generalized Stern-Brocot intervals}
\label{Sec:SternBrocot}

\subsection{The quotient Markov chain}
For $\ell\ge1$, let us denote by $n_\ell$ the time when the $\ell$-th surviving $R$ is appended, and set 
$$Q_\ell\egdef \dfrac{F_{n_{\ell+1}-1}^r}{F_{n_{\ell+1}-2}^r}, \quad\ell\ge0. $$
$Q_\ell$ is the quotient of the last two terms once the $\ell$-th definitive block of the reduced sequence has been written.
Observe that the right-product action of blocks $B\in\{R, RL, \ldots, RL^{k-2}\}$ acts on the quotient $F_n^r/F_{n-1}^r$ in the following way: For $0\le j\le k-2$, for any $(a,b)\in\RR^*\times\RR$, if we set $(a',b')\egdef (a,b)RL^j$, then
$$ \dfrac{b'}{a'} = f^j\circ f_0 \left(\dfrac{b}{a}\right), $$
where $f_0(q)\egdef\lambda+1/q$ and $f(q)\egdef\lambda -1/q$.
For short, we will denote by $f_j$ the function $f^j\circ f_0$.
Observe that $f_j$ is an homographic function associated to the matrix $RL^j$ in the following way: To the matrix $\begin{pmatrix}
\alpha  &  \beta      \\
\gamma  &  \delta
\end{pmatrix}$ corresponds the homographic function $q\mapsto\frac{\beta+\delta q}{\alpha+\gamma q}$.

It follows from Lemma~\ref{law} that $(Q_\ell)_{\ell\ge1}$ is a real-valued Markov chain with probability transitions 
$$
\PP\left( Q_{\ell+1} = f_j(q) | Q_{\ell} = q \right) = \frac{\rho^j}{\sum_{m=0}^{k-2}\rho^m}\ ,\quad 0\le j\le k-2.
$$
%In the next section, we construct the unique invariant probability measure of $(Q_\ell)$.

\subsection{Generalized Stern-Brocot intervals and the measure $\nu_{k, \rho}$}
\label{nu_rho}
Let us define subintervals of $\RR$: for $0\le j\le k-2$, set $I_j\egdef f_j([0, +\infty ])$. 
These intervals are of the form 
$$I_j =[b_{j+1}, b_j], \mbox{ where } b_0=+\infty, \ b_1=\lambda = f_0(+\infty)=f_1(0), \ b_{j+1}=f(b_j) = f_{j}(+\infty)=f_{j+1}(0).$$
Observe that $b_{k-1}=f_{k-1}(0)=0$ since $RL^{k-1}=\begin{pmatrix}1& 0\\0&-1\end{pmatrix}$.
Therefore, $(I_j)_{0\le j\le k-2}$ is a subdivision of $[0, +\infty ]$.

More generally, we set 
$$
I_{j_1, j_2, \dots, j_\ell}\egdef f_{j_1}\circ f_{j_2}\circ \cdots \circ f_{j_\ell} ([0, +\infty ]), \quad 
\forall (j_1, j_2, \dots, j_\ell)\in \{0, \dots, k-2\}^\ell.
$$
For any $\ell\ge1$, this gives a subdivision $\I(\ell)$ of $[0, +\infty ]$ since 
$$I_{j_1, j_2, \dots, j_{\ell-1}} = \bigcup_{j_\ell=0}^{k-2} I_{j_1, j_2, \dots, j_\ell}.$$
When $k=3$ ($\lambda =1$), this procedure provides subdivisions of $[0, +\infty ]$ into Stern-Brocot intervals.

\begin{lemma}
\label{generation}
The $\sigma$-algebra generated by $\I(\ell)$ increases to the Borel $\sigma$-algebra on $\RR_+$.
\end{lemma}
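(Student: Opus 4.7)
The plan is to show that the sequence of partitions $(\I(\ell))_{\ell\ge1}$ is refining and that the maximal diameter $\delta(\ell)$ of intervals in $\I(\ell)$, measured in the compactification $\overline{\RR_+}=[0,+\infty]$ (say after transport by the homeomorphism $\phi(q)\egdef q/(1+q)$ onto $[0,1]$), tends to $0$ as $\ell\to\infty$. Since $\I(\ell+1)$ refines $\I(\ell)$, every open interval of $\RR_+$ is then an increasing countable union (over $\ell$) of the elements of $\I(\ell)$ it contains, hence lies in $\bigvee_\ell \sigma(\I(\ell))$. As open intervals generate the Borel $\sigma$-algebra on $\RR_+$, the lemma follows.

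To estimate $\delta(\ell)$, I would exploit the Möbius structure recalled in Section~\ref{reductionSection}. By (\ref{RtimesPowersOfL}), each $f_j$ is the homography associated to the matrix $RL^j$, so the composition $f_{j_1}\circ\cdots\circ f_{j_\ell}$ corresponds to the product $M\egdef RL^{j_1}\cdots RL^{j_\ell}$. Writing $M=\begin{pmatrix}\alpha&\beta\\\gamma&\delta\end{pmatrix}$, the interval $I_{j_1,\dots,j_\ell}$ has endpoints $M(0)=\beta/\alpha$ and $M(\infty)=\delta/\gamma$. Since $\det(RL^j)=-1$, we have $|\det M|=1$, and a short computation gives
\[
\bigl|\phi(M(0))-\phi(M(\infty))\bigr|\;=\;\frac{1}{|(\alpha+\beta)(\gamma+\delta)|}.
\]
It therefore suffices to show that $\min_{(j_1,\dots,j_\ell)}|(\alpha+\beta)(\gamma+\delta)|\to+\infty$ uniformly over admissible sequences in $\{0,\dots,k-2\}^\ell$.

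The main obstacle is precisely this uniform lower bound. The key observation is that by construction, no subword $RL^{k-1}$ ever occurs in the product $M$, so each $M$ is a \emph{reduced} word in the Hecke-group generators $\{RL^0,\dots,RL^{k-2}\}$; no simplification via the relations (\ref{reduction}) can shorten it. Consequently the entries of $M$ cannot cancel out and must grow with $\ell$. I would make this precise either by an inductive analysis of how the column vector $(\alpha+\beta,\gamma+\delta)^T$ evolves when one more factor $RL^{j_{\ell+1}}$ is appended on the right — combined with a positivity / Perron–Frobenius argument on the associated transfer operator to obtain geometric growth — or, more classically, by invoking the convergence of Rosen continued fractions~\cite{rosen1954}: every $x\in\overline{\RR_+}$ admits a Rosen expansion, so for each fixed $x$ the nested intervals containing $x$ shrink to $\{x\}$, and a compactness argument on $\overline{\RR_+}$ upgrades this pointwise convergence to the required uniform bound on $\delta(\ell)$.
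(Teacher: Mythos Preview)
Your proposal is a plan rather than a proof, and the first of your two suggested routes contains a concrete error. You claim a Perron--Frobenius argument would yield \emph{geometric} growth of $|(\alpha+\beta)(\gamma+\delta)|$, but this is false: by Lemma~\ref{endpoints} the interval of $\I(\ell)$ containing $\infty$ is $[\ell\lambda,\infty]$, whose $\phi$-length is $1/(1+\ell\lambda)$, so the minimum of $|(\alpha+\beta)(\gamma+\delta)|$ over all words of length $\ell$ grows only linearly in $\ell$. The underlying reason is that the generators $R=RL^0$ and $RL^{k-2}$ each have a zero entry (see~\eqref{RtimesPowersOfL}), so the semigroup does not act by strict contractions in the Hilbert projective metric and no uniform spectral gap is available. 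You could still salvage the inductive route to obtain the (merely polynomial) growth you actually need, but that is real work you have not carried out.

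Your second route --- convergence of Rosen continued fractions plus a compactness upgrade --- is valid in principle but heavier than necessary. The paper's argument is essentially one line: by Proposition~\ref{finiteCF} the set $\bigcup_\ell\E(\ell)$ of all endpoints coincides with the set of nonnegative reals admitting a finite $\lambda$-Rosen continued fraction expansion, and Rosen~\cite{rosen1954} showed this set is dense in~$\RR$ whenever $\lambda<2$. Density of the endpoints of a refining sequence of partitions already forces the generated $\sigma$-algebra to be Borel (every open interval is then a countable union of partition elements), with no need for a uniform mesh estimate or any compactness argument.
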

We postpone the proof of this lemma to the next section.

Observe that for any $q\in\RR_+$, $\PP(Q_{\ell}\in I_{j_1, j_2, \dots, j_{\ell}}| Q_0=q)=\frac{\rho^{j_1+\cdots+j_\ell}}{(\sum_0^{k-2}\rho^m)^\ell}$.
Therefore, the probability measure $\nu_{k, \rho}$ on $\RR_+$ defined by 
$$
\nu_{k, \rho}(I_{j_1, j_2, \dots, j_{\ell}}) \egdef \frac{\rho^{j_1+\cdots+j_\ell}}{(\sum_0^{k-2}\rho^m)^\ell}\ 
$$
is invariant for the Markov chain $(Q_\ell)$. 
The fact that $\nu_{k, \rho}$ is the unique invariant probability for this Markov chain comes from the following lemma.
\begin{lemma}
\label{L+}
 There exists almost surely $L_+\ge0$ such that for all $\ell\ge L_+$, $Q_{\ell}>0$.
\end{lemma}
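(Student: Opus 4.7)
The plan is to exploit two facts about the Markov chain $(Q_\ell)$: (i) the half-line $[0,+\infty]$ is absorbing, and (ii) from any $Q_\ell<0$ the chain enters $[0,+\infty]$ in a single step with a probability bounded below by a strictly positive constant independent of $Q_\ell$. Together with the Markov property, these force the hitting time of $[0,+\infty]$ to be a.s. finite, and a short additional argument then upgrades non-negativity to strict positivity.

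For (i), if $Q_\ell\ge 0$ then $f_0(Q_\ell)=\lambda+1/Q_\ell\in[\lambda,+\infty]=I_0$; since for each $j\in\{0,1,\ldots,k-2\}$ the map $f^j$ sends $I_0$ onto $I_j\subset[0,+\infty]$, we have $Q_{\ell+1}=f_{j_{\ell+1}}(Q_\ell)\in I_{j_{\ell+1}}\subset[0,+\infty]$.

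For (ii), I would split the negative half-line according to the sign of $f_0(Q_\ell)$. If $Q_\ell\le-1/\lambda$, then $f_0(Q_\ell)\in[0,\lambda]$, so the event $\{j_{\ell+1}=0\}$, of probability $1/Z$ with $Z\egdef\sum_{m=0}^{k-2}\rho^m$, already gives $Q_{\ell+1}\ge0$. If $-1/\lambda<Q_\ell<0$, then $y\egdef f_0(Q_\ell)<0$, so $f(y)=\lambda-1/y>\lambda$, and for any $j_{\ell+1}\in\{1,\ldots,k-2\}$ one has $Q_{\ell+1}=f^{j_{\ell+1}-1}(f(y))\in f^{j_{\ell+1}-1}(I_0)=I_{j_{\ell+1}-1}\subset[0,+\infty]$, an event of probability $(Z-1)/Z$. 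Setting $c\egdef\min(1/Z,(Z-1)/Z)$ gives a uniform lower bound $c>0$ as soon as $\rho>0$, i.e., $p<1$; for $p=1$ we have $\rho=0$ and no $L$'s appear in the surviving sequence, so $Q_0=f_0(F_2/F_1)>\lambda>0$ and the lemma is trivial.

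Combining (i) and (ii) via the Markov property gives $\PP(Q_\ell<0\text{ for all }0\le\ell\le N)\le(1-c)^N\to 0$, so $L_+\egdef\inf\{\ell\ge0:Q_\ell\ge0\}<\infty$ almost surely, and $Q_\ell\ge0$ for every $\ell\ge L_+$. The main remaining subtlety, and the step I expect to require most care, is the upgrade to strict positivity: whenever $Q_\ell>0$ is finite, $f_0(Q_\ell)\in(\lambda,+\infty)$ lies in the interior of $I_0$ and $f^j$ sends interiors to interiors, so $Q_{\ell+1}$ remains strictly positive and finite. The only boundary value of $[0,+\infty]$ the chain can enter from below is $Q_\ell=0$, which forces $f_0(Q_\ell)=b_0=+\infty$ and hence $Q_{\ell+1}=b_{j_{\ell+1}}>0$ since $j_{\ell+1}\le k-2<k-1$. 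Enlarging $L_+$ by at most one step therefore secures $Q_\ell>0$ for every $\ell\ge L_+$.
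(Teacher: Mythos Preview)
Your proof is correct and follows the same strategy as the paper's: show that $[0,+\infty]$ is absorbing and that from any nonzero state the chain jumps into it in one step with probability bounded below by a fixed positive constant. The paper compresses your two-case analysis into the single observation that for every $q\neq0$ either $f_0(q)>0$ or $f_1(q)=\lambda-1/f_0(q)>0$, yielding the bound $\PP(Q_{\ell+1}>0\mid Q_\ell=q)\ge\rho/Z$, and then concludes immediately from $Q_\ell>0\Rightarrow Q_{\ell+1}>0$; your more explicit handling of the boundary value $0$ and of the degenerate case $p=1$ is careful but not essential.
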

 
\begin{proof}
 For any $q\in\RR\setminus\{0\}$, either $f_0(q)>0$, or $f_1(q)=\lambda-1/f_0(q)>0$. Hence, for any $\ell\ge0$,
$$\PP(Q_{\ell+1}>0|Q_\ell=q)\ge \frac{\rho}{\sum_0^{k-2}\rho^m}\,. $$
It follows that $\PP(\forall\ell\ge0,\ Q_\ell<0)=0$, and since $Q_\ell>0\Longrightarrow Q_{\ell+1}>0$, the lemma is proved.
\end{proof}

%$\nu_{k, \rho}(I_{j_1, j_2, \dots, j_{\ell}}) = \PP(Q_{\ell}=f_{j_1}\circ f_{j_2}\circ \cdots \circ f_{j_\ell}(q) | Q_0=q)$.
To a given finite sequence of blocks $(RL^{j_\ell}), \ldots, (RL^{j_1})$, we associate the generalized Stern-Brocot interval $I_{j_1, j_2, \dots, j_{\ell}}$. If we extend the sequence of blocks leftwards, we get smaller and smaller intervals.
Adding infinitely many blocks, we get in the limit a single point corresponding to the intersection of the intervals, which follows the law $\nu_{k, \rho}$.

\subsection{Link with Rosen continued fractions}
Recall (see~\cite{rosen1954}) that, since $1\le \lambda <2$, any real number $q$ can be written as
$$ q = a_0\lambda + \cfrac{1}{a_1\lambda + \cfrac{1}{\ddots+\cfrac{1}{a_n \lambda +_{\ddots}}}} $$
where $(a_n)_{n\ge0}$ is a finite or infinite sequence, with $a_n\in\ZZ\setminus\{0\}$ for $n\ge1$.
This expression will be denoted by $[a_0,\ldots, a_n,\ldots]_\lambda$. It is called a \emph{$\lambda$-Rosen continued fraction expansion} of $q$, and is not unique in general. When $\lambda=1$ (\textit{i.e.} for $k=3$), we recover generalized continued fraction expansion in which partial quotients are positive or negative integers. 

Observe that the function $f_j$ are easily expressed in terms of Rosen continued fraction expansion. The Rosen continued fraction expansion of $f_j(q)$ is the concatenation of $(j+1)$ alternated $\pm1$ with the expansion of $\pm q$ according to the parity of $j$:
\begin{equation}
\label{fj}
 f_j([a_0,\ldots, a_n,\ldots]_\lambda) = 
\begin{cases}
[\underbrace{1,-1,1,\ldots,1}_{(j+1) \mbox{ terms}},a_0,\ldots, a_n,\ldots]_\lambda & \mbox{ if $j$ is even}\\
[\underbrace{1,-1,1,\ldots,-1}_{(j+1) \mbox{ terms}},-a_0,\ldots, -a_n,\ldots]_\lambda & \mbox{ if $j$ is odd}.
\end{cases}
\end{equation}

For any $\ell\ge1$, let $\E(\ell)$ be the set of endpoints of the subdivision $\I(\ell)$.
The finite elements of $\E(1)$ can be written as
$$ b_j=f_j(0)=[\underbrace{1,-1,1,\ldots,\pm1}_{j \mbox{ terms}}]_\lambda \quad \forall\ 1\le j\le k-1.$$ In particular for $j=k-1$ we get a finite expansion of $b_{k-1}=0$. Moreover, by~\eqref{fj},
$$b_0=f_0(0)=\infty=[1,\underbrace{1,-1,1,\ldots,\pm1}_{k-1 \mbox{ terms}}]_\lambda.$$
Iterating~\eqref{fj}, we see that for all $\ell\ge1$, the elements of $\E(\ell)$ can be written as a finite $\lambda$-Rosen continued fraction with coefficients in $\{-1,1\}$.

\begin{prop}
\label{finiteCF}
The set $\bigcup_{\ell\ge1}\E_\ell$ of all endpoints of generalized Stern-Brocot intervals is the set of all nonnegative real numbers admitting a finite $\lambda$-Rosen continued fraction expansion.
\end{prop}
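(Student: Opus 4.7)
The plan is to prove each inclusion separately.

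The inclusion $\bigcup_{\ell\ge1}\E(\ell)\cap\RR_+ \subseteq \{q\ge 0 : q\mbox{ admits a finite }\lambda\mbox{-Rosen CF}\}$ proceeds by induction on $\ell$. The base case $\ell=1$ was already recorded in the paragraph preceding the proposition: the finite elements of $\E(1)$ are $b_1=[1]_\lambda$, $b_2=[1,-1]_\lambda$, \dots, $b_{k-1}=[1,-1,1,\ldots,(-1)^{k-2}]_\lambda=0$. For the inductive step, any element of $\E(\ell+1)\cap\RR_+$ is of the form $f_j(x)$ for some $j\in\{0,\ldots,k-2\}$ and some $x\in\E(\ell)$ (allowing $x=\infty$); formula \eqref{fj} then produces a finite $\lambda$-Rosen CF for $f_j(x)$ by prepending an alternating $\pm1$ block of length $j+1$ to the CF of $x$ (and possibly flipping the signs of the subsequent partial quotients).

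For the reverse inclusion, let $q\ge 0$ admit a finite $\lambda$-Rosen CF. I first plan to reduce to the case where all partial quotients lie in $\{-1,1\}$ and the leading one is $+1$: this is a classical rewriting result going back to Rosen~\cite{rosen1954}, whose core ingredient is the Hecke group relation $(ST)^k=1$, expressed at the level of CFs as $[1,-1,1,\ldots,(-1)^{k-2}]_\lambda=0$ (the identity $b_{k-1}=0$ established above). Granting this reduction, write $q=[\epsilon_0,\ldots,\epsilon_m]_\lambda$ with $\epsilon_i\in\{-1,1\}$ and $\epsilon_0=+1$, and induct on $m$. Take $j$ maximal with $j+1\le k-1$ such that $(\epsilon_0,\ldots,\epsilon_j)=(1,-1,1,\ldots,(-1)^j)$; inverting \eqref{fj} gives $q=f_j(r)$ with $r=[\delta\epsilon_{j+1},\ldots,\delta\epsilon_m]_\lambda$, where $\delta=(-1)^j$ absorbs the sign flip dictated by \eqref{fj}. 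Since $r$ has a $\pm1$ CF of length $m-j<m+1$, the inductive hypothesis places $r\in\bigcup_\ell\E(\ell)$, and hence $q=f_j(r)\in\bigcup_\ell\E(\ell)$. The base cases (namely $m=0$, giving $q=\lambda=b_1$, or the whole CF being an alternating prefix, giving $q=b_{j+1}$ for some $j+1\le k-1$) lie directly in $\E(1)$.

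The main obstacle is the Rosen rewriting step: showing that every finite $\lambda$-Rosen CF admits a $\pm1$ representation with leading sign $+1$. The key identity $[1,-1,1,\ldots,(-1)^{k-2}]_\lambda=0$ serves as a ``zero-insertion'' rule that allows any partial quotient of absolute value $\ge 2$ to be decomposed into finer alternating $\pm1$ pieces. A secondary subtlety is the cap $j+1\le k-1$ in the peeling induction: the full alternating pattern of length $k-1$ \emph{is} the null expansion, so it cannot form the entire initial prefix of a nonzero $q$'s CF (and the exceptional case $q=0=b_{k-1}$ lies directly in $\E(1)$); this is precisely what guarantees that the CF length strictly decreases at each step of the induction.
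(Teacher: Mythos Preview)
Your first inclusion (endpoints admit finite $\lambda$-CFs) matches what the paper does in the paragraph preceding the proposition, via iteration of~\eqref{fj}. The reverse inclusion, however, is handled very differently in the paper, and your approach has a genuine gap.

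The paper's argument is short and structural: the set of nonnegative reals with finite $\lambda$-Rosen CF is the smallest subset of $\RR_+$ containing $0$ and invariant under $x\mapsto 1/x$ and $x\mapsto x+\lambda$. It then checks that $\bigcup_\ell\E(\ell)$ contains $0=b_{k-1}$, is invariant under $x\mapsto 1/x$ (this is Lemma~\ref{endpoints}, proved via the identity $f_j(x)=1/f_{k-2-j}(1/x)$ of Lemma~\ref{inverse}), and is invariant under $x\mapsto f_{k-2}(x)=1/(x+\lambda)$ by construction; combining the two invariances gives closure under $x\mapsto x+\lambda$. You never invoke Lemmas~\ref{inverse} or~\ref{endpoints}, and this $x\mapsto 1/x$ symmetry is precisely the missing ingredient in your argument.

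Your gap is the ``Rosen rewriting step'': you assert that every finite $\lambda$-CF can be rewritten with all partial quotients in $\{-1,+1\}$ and leading sign $+1$, citing~\cite{rosen1954} and the relation $[1,-1,\ldots,(-1)^{k-2}]_\lambda=0$. This is not a result one can simply quote from Rosen; it is essentially equivalent to what you are trying to prove. Worse, even granting it, your peeling induction is incomplete: having peeled off $f_j$, you need $r\ge 0$ to apply the inductive hypothesis, but a $\pm1$ CF with leading $+1$ does \emph{not} in general represent a nonnegative number. What you would actually need is a \emph{standard} expansion in the sense of the proof of Proposition~\ref{p=0} (no run of $k-1$ alternating signs), and establishing that every nonnegative finite-CF real admits such an expansion is again equivalent to the proposition itself. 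The clean way out is exactly the paper's route through the $x\mapsto 1/x$ invariance of the endpoint set.
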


The proof uses the two following lemmas

\begin{lemma}
\label{inverse}
$$
f_j(x)=\dfrac{1}{f_{k-2-j}(1/x)}, \quad\forall\ 0\le j\le k-2.
$$
\end{lemma}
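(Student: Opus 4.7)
The plan is to reduce the claim to a short matrix identity, using the correspondence of Section~\ref{reductionSection} between $2\times 2$ matrices and homographic functions.

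First, I would note that the involution $x\mapsto 1/x$ is the homographic function associated with $J\egdef\begin{pmatrix}0&1\\1&0\end{pmatrix}$, and that under this correspondence the composition of two homographic functions corresponds to the product of the two matrices in reversed order (because, as in Section~\ref{reductionSection}, the matrices act on the right of row vectors $(F_{n-1},F_n)$). Consequently the identity to be proved, $f_j(x)=1/f_{k-2-j}(1/x)$, is equivalent---up to an overall scalar, which does not affect the associated homographic function---to the matrix identity
\[ RL^j \;=\; J\cdot RL^{k-2-j}\cdot J. \]

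Second, I would verify this identity by direct inspection of the explicit formula~\eqref{RtimesPowersOfL}. Left- and right-multiplication by $J$ exchanges the $(1,1)$ and $(2,2)$ entries of any $2\times 2$ matrix (and also exchanges $(1,2)$ with $(2,1)$, which changes nothing here since $RL^{k-2-j}$ is symmetric). One finds
\[ J\cdot RL^{k-2-j}\cdot J \;=\; \dfrac{1}{\sin(\pi/k)}\begin{pmatrix}\sin\frac{(k-j)\pi}{k}&\sin\frac{(k-1-j)\pi}{k}\\\sin\frac{(k-1-j)\pi}{k}&\sin\frac{(k-2-j)\pi}{k}\end{pmatrix}, \]
and the symmetry $\sin((k-m)\pi/k)=\sin(m\pi/k)$ turns this precisely into the expression for $RL^j$ given by~\eqref{RtimesPowersOfL}.

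No genuine obstacle is to be expected here: the lemma is essentially an incarnation of the identity $\sin(\pi-\theta)=\sin\theta$ combined with the explicit sine form of the matrices $RL^j$. If one prefers to bypass the matrix formalism, a purely computational variant is to substitute $1/x$ directly in the explicit formula for $f_{k-2-j}$, clear the $1/x$ by multiplying numerator and denominator by~$x$, and apply the same sine symmetry to recognize the resulting expression as $1/f_j(x)$.
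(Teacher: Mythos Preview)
Your proof is correct. The reduction to the matrix identity $RL^j=J\,RL^{k-2-j}\,J$ via the homographic correspondence is valid (with the paper's convention $M\mapsto(q\mapsto(\beta+\delta q)/(\alpha+\gamma q))$, conjugation by $J$ indeed corresponds to pre- and post-composition with $x\mapsto 1/x$), and the verification from~\eqref{RtimesPowersOfL} using $\sin\bigl((k-m)\pi/k\bigr)=\sin(m\pi/k)$ is immediate.

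This is a genuinely different route from the paper's proof. The paper proceeds by induction on~$j$: it checks the case $j=0$ from the explicit form of $RL^{k-2}$, and for the inductive step writes $f_{j+1}=\lambda-1/f_j$ and $f_{k-2-j}=\lambda-1/f_{k-3-j}$ to pass from $j$ to $j+1$. Your argument bypasses the induction entirely by exploiting the explicit sine form of $RL^j$ for all $j$ at once; the whole lemma then collapses to a single application of $\sin(\pi-\theta)=\sin\theta$. The paper's approach has the mild advantage of using only the recursive relation $f_{j+1}=f\circ f_j$ rather than the closed form, but yours is shorter and makes the underlying symmetry (conjugation by the swap matrix~$J$) completely transparent.
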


\begin{proof}
From \eqref{RtimesPowersOfL}, we get 
$$RL^{k-2}=\begin{pmatrix} \lambda & 1 \\ 1 & 0 \end{pmatrix}, \quad\mbox{hence}\quad f_{k-2}(x)=\frac{1}{\lambda+x}.$$ 
Therefore, $f_{k-2}(1/x)=1/f_{0}(x)$ and the statement is true for $j=0$. Assume now that the result is true for $j\ge0$. 
We have
$$
f_{j+1}(x)
%=f\circ f_j(x) 
= \lambda - \dfrac{1}{f_j(x)} = \lambda - f_{k-2-j}\left(\dfrac{1}{x}\right) = \lambda - f\circ f_{k-3-j}\left(\dfrac{1}{x}\right) = \dfrac{1}{f_{k-3-j}\left(\frac{1}{x}\right)}\ ,
$$
so the result is proved by induction.
\end{proof}

\begin{lemma}
\label{endpoints}
For any $\ell\ge1$, the set $\E(\ell)$ of endpoints of the subdivision $\I(\ell)$ is invariant by $x\mapsto 1/x$.
Moreover, the largest finite element of $\E(\ell)$ is $\ell\lambda$ and the smallest positive one is $1/\ell\lambda$.
\end{lemma}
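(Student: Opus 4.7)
The plan is to prove both assertions by induction on $\ell$, using the inversion formula of Lemma~\ref{inverse} to deduce the smallest positive endpoint once the largest finite one has been identified. The basic observation is that every element of $\E(\ell)$ has the form $e = f_{j_1}\circ\cdots\circ f_{j_\ell}(y)$ with $(j_1,\ldots,j_\ell)\in\{0,\ldots,k-2\}^\ell$ and $y\in\{0,+\infty\}$, since by construction the $\ell$-level intervals are $I_{j_1,\ldots,j_\ell} = f_{j_1}\circ\cdots\circ f_{j_\ell}([0,+\infty])$.

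For the symmetry under $x\mapsto 1/x$, I would iterate Lemma~\ref{inverse} once in each slot of the composition to obtain
$$
\frac{1}{e} \;=\; f_{k-2-j_1}\circ f_{k-2-j_2}\circ\cdots\circ f_{k-2-j_\ell}\!\left(\frac{1}{y}\right);
$$
since $\{0,+\infty\}$ is stable under inversion, the right-hand side again lies in $\E(\ell)$, which establishes the first assertion.

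For the extremes, I would use the decomposition $\E(\ell) = \bigcup_{j=0}^{k-2} f_j(\E(\ell-1))$ and remark that among the $k-1$ branches, only $f_0(x)=\lambda+1/x$ can produce values strictly larger than $\lambda$: for $j\ge 1$, $f_j$ maps $[0,+\infty]$ into $I_j = [b_{j+1}, b_j] \subset [0, \lambda]$, because the $b_j$ decrease from $b_0=+\infty$ to $b_{k-1}=0$ with $b_1=\lambda$. Writing $M_\ell$ and $m_\ell$ for the largest finite and smallest positive elements of $\E(\ell)$, and noting that $f_0$ is decreasing on $(0,+\infty)$, one sees that $M_\ell = f_0(m_{\ell-1})$. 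The base case $\ell=1$ is direct: $M_1 = b_1 = \lambda$ and $m_1 = b_{k-2} = f_{k-2}(0) = 1/\lambda$, using $RL^{k-2}=\bigl(\begin{smallmatrix}\lambda&1\\1&0\end{smallmatrix}\bigr)$, whence $f_{k-2}(x)=1/(\lambda+x)$. The inductive step then reads
$$
M_\ell \;=\; \lambda + \frac{1}{m_{\ell-1}} \;=\; \lambda + M_{\ell-1} \;=\; \ell\lambda,
$$
where $m_{\ell-1}=1/M_{\ell-1}$ follows from the inversion symmetry already proved; one last application of that symmetry then gives $m_\ell = 1/M_\ell = 1/(\ell\lambda)$.

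No step is expected to present a genuine obstacle; the only point requiring a moment's care is confirming at the start of the induction that the branches $f_j$ with $j\ge 1$ are all bounded above by $\lambda$, so that the maximum must come from the $f_0$ branch. This is forced by the fact that $(I_j)_{0\le j\le k-2}$ is an ordered subdivision of $[0,+\infty]$ whose endpoints $b_j$ decrease monotonically from $+\infty$ to $0$.
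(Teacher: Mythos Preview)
Your proof is correct and follows essentially the same approach as the paper's: induction on $\ell$, Lemma~\ref{inverse} for the inversion symmetry, and the observation that only the decreasing branch $f_0$ can exceed $\lambda$ so that $M_\ell=f_0(m_{\ell-1})=\lambda+M_{\ell-1}$. The only cosmetic difference is that you establish the symmetry for all $\ell$ in one stroke by iterating Lemma~\ref{inverse} through the full composition $f_{j_1}\circ\cdots\circ f_{j_\ell}$, whereas the paper weaves the symmetry and the extremal computation into a single induction; both are equally valid and yield the same proof.
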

\begin{proof}
Recall that the elements of $\E(1)$ are of the form $b_j=f_{j-1}(\infty)=f_{j}(0)$, and the largest finite endpoint is $b_1=\lambda$. 
Hence, the result for $\ell=1$ is a direct consequence of Lemma~\ref{inverse}. 

Assume now that the result is true for $\ell\ge1$. Consider $b\in\E(\ell+1)\setminus\E(\ell)$. There exists $0\le j\le k-2$ and $b'\in\E(\ell)$ such that $b=f_j(b')$. Since $1/b'$ is also in $\E(\ell)$, we see from Lemma~\ref{inverse} that $1/b=f_{k-2-j}(1/b')\in\E(\ell+1)$. Hence $\E(\ell+1)$ is invariant by $x\mapsto 1/x$. Now, since $f_0$ is decreasing, the largest finite endpoint of $\E(\ell+1)$ is $f_0(1/\ell\lambda)=(\ell+1)\lambda$, and the smallest positive endpoint of $\I(\ell+1)$ is $1/(\ell+1)\lambda$.
\end{proof}

\begin{proof}[Proof of Proposition~\ref{finiteCF}]
 The set of nonnegative real numbers admitting a finite $\lambda$-Rosen continued fraction expansion is the smallest subset of $\RR_+$ containing $0$ which is invariant under $x\mapsto 1/x$ and $x\mapsto x+\lambda$. By Lemma~\ref{endpoints}, the set $\bigcup_{\ell\ge1}\E_\ell$ is invariant under $x\mapsto 1/x$. Moreover, it is also invariant by $x\mapsto f_{k-2}(x)=1/(x+\lambda)$, and contains $b_{k-1}=0$.
\end{proof}

\begin{remark}
The preceding proposition generalizes the well-known fact that the endpoints of Stern-Brocot intervals are the rational numbers, that is real numbers admitting a finite continued fraction expansion.
\end{remark}

\begin{proof}[Proof of Lemma~\ref{generation}]
This is a direct consequence of Proposition~\ref{finiteCF} and the fact that the set of numbers admitting a finite $\lambda$-Rosen continued fraction expansion is dense in $\RR$ for any $\lambda<2$ (see \cite{rosen1954}).
\end{proof}

\section{Coupling with a two-sided stationary process}
\label{Sec:Coupling}
If $|F_{n+1}/F_n|$ was a stationary sequence with distribution $\nu_{k, \rho}$, then a direct application of the ergodic theorem would give the convergence stated in Theorem~\ref{MainTheorem}. 
The purpose of this section is to prove via a coupling argument that everything goes as if it was the case.
For this, we embed the sequence $(X_n)_{n\ge 3}$ in a doubly-infinite i.i.d. sequence $(X_n^*)_{n\in\ZZ}$ with $X_n=X_n^*$ for all $n\ge 3$. 
We define the reduction of $(X^*)_{-\infty< j\le n}$, which gives a left-infinite sequence of i.i.d. blocks, and denote by $q_n^*$ the corresponding limit point, which follows the law $\nu_{k, \rho}$.
We will see that for $n$ large enough, the last $\ell$ blocks of $\red(X_3\dots X_n)$ and $\red((X^*)_{-\infty< j\le n})$ are the same. 
Therefore, the quotient $q_n\egdef F_n^r/F_{n-1}^r$ is well-approximated by $q_n^*$, and an application of the ergodic theorem to $q_n^*$ will give the announced result.

\subsection{Reduction of a left-infinite sequence}
\label{reductioninfinie}
We will define the reduction of a left-infinite i.i.d. sequence $(X^*)_{-\infty}^0$ by considering the successive reduced sequence $\red(X_{-n}^*\dots X_0^*)$.

\begin{prop}
\label{stability}
For all $\ell\ge 1$, there exists almost surely $N(\ell)$ such that the last $\ell$ blocks of $\red(X_{-n}^*\dots X_0^*)$ are the same for any $n\ge N(\ell)$.
\end{prop}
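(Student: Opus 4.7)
I compare $R_n := \red(X^*_{-n}, \ldots, X^*_0)$ and $R_{n+1} := \red(X^*_{-n-1}, X^*_{-n}, \ldots, X^*_0)$ by running the two reductions in parallel from time $-n$ onwards, sharing the common inputs $X^*_{-n}, \ldots, X^*_0$, and aim to show that their last $\ell$ blocks almost surely coincide for all $n$ large enough. The two reductions' initial states at time $-n$ differ only by the presence of the extra letter $X^*_{-n-1}$, which has been processed in $R_{n+1}$ but not in $R_n$. The update rule at each step is deterministic given the state and the input letter, and it depends on the state only through its flip bit and the last $k$ letters of its reduced word.

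I introduce the \emph{agreement length} $A_t$: the length of the longest common suffix of the two reduced words at time $t$, taken to be $0$ if their flip bits differ. The key observation is that while $A_t \ge k - 1$ and the flip bits agree (the ``good regime''), both chains append the same letter and make the same deletion decision, so $(A_t)$ evolves as a random walk with steps $+1$ (with probability $1 - p_{\text{del}}$) or $-(k-1)$ (with probability $p_{\text{del}}$), while flip bits remain in sync. By Lemma~\ref{Survival} applied to the stationary input, the intermediate length of the reduction tends to infinity, which forces $p_{\text{del}} < 1/k$ in the long run; hence the drift of $A_t$ in the good regime, $1 - k \cdot p_{\text{del}}$, is strictly positive. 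The initial agreement length is $A_{-n+1} = 1$, since both reductions process $X^*_{-n}$ without deletion and hence both end with $X^*_{-n}$ with matching flip bits.

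It remains to show that $A_0 \ge \ell(k-1)$ with overwhelming probability as $n \to \infty$; this is enough, since each reduced block has length at most $k-1$, so a common suffix of length $\ell(k-1)$ determines the last $\ell$ blocks. Entry into the good regime from the low-agreement initial configuration, and recovery from occasional excursions to $A_t < k - 1$ (where the flip bits may decouple), both happen with positive probability per step, by exhibiting explicit finite input patterns that restore matching flip bits and $A_t \ge k - 1$, a combinatorial feature of the reduction rule. A Borel--Cantelli argument then shows that the probability of $\{A_0 < \ell(k-1)\}$ decays exponentially in $n$ and hence only finitely many $n$ violate the bound almost surely. The main technical obstacle is the detailed analysis of the low-agreement regime: constructing resynchronizing patterns and establishing positive drift rigorously requires understanding the full Markov chain on (reduced word, flip bit), which is infinite state; the reduction to a tractable finite setup, analogous to the argument underlying Lemma~\ref{Survival}, is the crux of the proof.
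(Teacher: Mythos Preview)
Your proposal is a sketch, not a proof, and you say so yourself: you identify the ``low-agreement regime'' analysis as ``the crux of the proof'' and then do not carry it out. That is precisely the hard part. Once the two reductions' flip bits desynchronize, they append opposite letters, the agreement length drops to zero, and it is not at all clear how to exhibit a finite input pattern that resynchronizes them --- the flip bit of each chain depends on its entire current reduced word, not just on a bounded suffix, so there is no obvious finite-state recurrence to exploit. A second, smaller gap: in the good regime you treat $A_t$ as a random walk with step probabilities $p_{\text{del}}$ and $1-p_{\text{del}}$, but deletions are not i.i.d.\ events (whether a deletion occurs depends on the current reduced word). You would need to couple $A_t$ to the length of the reduced word and invoke Lemma~\ref{Survival} carefully, not just cite it heuristically.

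The paper's proof avoids all of this by a regeneration-time argument rather than a coupling of $R_n$ with $R_{n+1}$. The key idea is to show that with positive probability the event ``for every $n>0$, the word $X^*_{-n}\ldots X^*_{-1}$ is proper (its reduction does not end with a deletion)'' occurs. This follows from two lemmas: first, if this event fails then $X^*_0$ is preceded by a \emph{unique} excursion (a word with empty reduction); second, the total probability of excursions is strictly less than $1$. Given this, ergodicity produces infinitely many indices $j$ such that $X^*_{-j}$ is an $R$ that survives in $\red(X^*_{-j}\ldots X^*_0)$ and every prefix $X^*_{-n}\ldots X^*_{-j-1}$ is proper. At such a $j$ the surviving $R$ acts as a barrier: the contribution of $X^*_{-j}\ldots X^*_0$ to $\red(X^*_{-n}\ldots X^*_0)$ is the same for all $n\ge j$, which is exactly the stability claim. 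This bypasses any need to track resynchronization of flip bits, and is what you should aim for.
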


This allows us to define almost surely the reduction of a left-infinite i.i.d. sequence $(X^*)_{-\infty}^0$ as the left-infinite sequence of blocks obtained in the limit of $\red(X_{-n}^*\dots X_0^*)$ as $n\to\infty$. 

Let us call \emph{excursion} any finite sequence $w_1\dots w_m$ of $R$'s and $L$'s such that $\red (w_1\dots w_m)=\emptyset$. 
We say that a sequence is {\em proper} if its reduction process does not end with a deletion. This means that the next letter is not flipped during the reduction.

The proof of the proposition will be derived from the following lemmas.

\begin{lemma}
\label{Lem:excursion}
If there exists $n>0$ such that $X_{-n}^*\dots X_{-1}^*$ is not proper, then $X_0^*$ is preceded by a unique excursion.
\end{lemma}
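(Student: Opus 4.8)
The statement asserts that if some left-factor $X_{-n}^*\dots X_{-1}^*$ is not proper, then the position $0$ is preceded by a unique excursion, i.e.\ there is exactly one $m>0$ with $\red(X_{-m}^*\dots X_{-1}^*)=\emptyset$. The plan is to exploit the combinatorial structure of the reduction process, in particular the fact that a deletion of a pattern $RL^{k-1}$ flips the following letter, so that ``being proper'' and ``ending with a deletion'' are complementary and are detected by a simple local rule. First I would unwind the definition: recall that $\red(x_3\dots x_n)$ is built incrementally, and that each step either appends a letter (proper step) or appends-and-deletes (non-proper step); a sequence is proper iff its last constructive step was an appending without deletion. The key point to isolate is a \emph{concatenation/cancellation lemma}: if $\red(w)=\emptyset$ and $w$ is proper (which an excursion automatically is, since after the final deletion the word is empty and no further flip is pending — one should check the bookkeeping here), then for any word $v$ one has $\red(vw)=\red(v)$, with the reduction of $v$ also being ``proper-compatible'', i.e.\ the flip-state after processing $vw$ equals the flip-state after processing $v$.

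Granting that lemma, existence of at least one excursion is immediate from the hypothesis: if $X_{-n}^*\dots X_{-1}^*$ is not proper, then its reduction ended with a deletion of some suffix $RL^{k-1}$; tracing back, the block that got deleted together with everything strictly to its right within the window forms a word whose reduction is empty. More carefully, I would argue by looking at the reduction of $X_{-n}^*\dots X_{-1}^*$ and at the last time $-m$ (with $0<m\le n$) at which the running reduced word returned to a state ``just after a deletion''; the segment $X_{-m}^*\dots X_{-1}^*$ is then an excursion. For uniqueness, suppose $X_{-m}^*\dots X_{-1}^*$ and $X_{-m'}^*\dots X_{-1}^*$ are both excursions with $m<m'$. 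Using the cancellation lemma, $\red(X_{-m'}^*\dots X_{-1}^*)=\red(X_{-m'}^*\dots X_{-m-1}^*)$, so $X_{-m'}^*\dots X_{-m-1}^*$ would also be an excursion; but then $X_{-m'}^*\dots X_{-1}^* = (X_{-m'}^*\dots X_{-m-1}^*)(X_{-m}^*\dots X_{-1}^*)$ is a concatenation of two consecutive excursions, which I would rule out by showing an excursion cannot be properly extended on the left to another excursion without passing through the ``empty'' state again — contradicting maximality of $m$ as the \emph{first} return to empty, or equivalently minimality of the excursion ending at $-1$. So the excursion preceding position $0$, when it exists, is the unique \emph{shortest} one, and any other candidate would decompose through it.

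The main obstacle I expect is the careful handling of the flip-state (the one-bit memory ``was the previous construction step a deletion?'') across concatenations: the reduction map is not simply a monoid homomorphism into words, because the letter appended at a given time depends on whether the previous step deleted. One must therefore work with the pair (reduced word, flip-bit) and verify that an excursion resets the flip-bit to its ``fresh'' value, so that prepending material genuinely sees an unchanged state. This is where the precise phrasing ``proper'' earns its keep, and it is the step where a wrong definition would silently break the argument; I would spend most of the writing there, and treat the existence/uniqueness bookkeeping above as routine once the cancellation lemma is pinned down. The analogous Lemma~2.x in \cite{janvresse2007} for $k=3$ provides the template, and the only genuinely new feature for general $k$ is that deletions remove a whole block $RL^{k-1}$ rather than a pattern of length $2$, which does not affect the state-machine reasoning.
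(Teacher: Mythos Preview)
Your proposal has a genuine gap in the uniqueness argument. First, excursions are \emph{not} proper: by definition an excursion ends with a deletion (that is precisely how the reduced word becomes empty), so your parenthetical remark is wrong and the hypothesis ``$w$ is proper'' in your cancellation lemma is never satisfied by an excursion. If you drop that hypothesis, the lemma $\red(vw)=\red(v)$ is simply false: for $k=3$ take $v=w=RLL$; each reduces to $\emptyset$, but $\red(RLLRLL)=LLL\neq\emptyset=\red(v)$. The failure is exactly the flip-bit you worried about: since $v$ is an excursion and hence not proper, the initial $R$ of $w$ is flipped to $L$ when processing $vw$, and $w$ no longer behaves as an excursion inside the larger word. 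Consequently you cannot deduce that $X_{-m'}^*\dots X_{-m-1}^*$ is an excursion, and nothing in the uniqueness sketch survives. Your existence sketch is also too loose: ``the last time $-m$ at which the running reduced word returned to a state `just after a deletion'\,'' is, under the hypothesis, time $-1$ itself, so that prescription yields nothing.

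The paper proceeds by a completely different, algebraic route. For uniqueness it proves directly that an excursion can never be a strict suffix of a larger excursion. If $W=W_1RW'$ and $RW'$ are both excursions, then as matrices $WL=\pm R$ and $RW'L=\pm R$ (appending a letter to an excursion and reducing yields the single flipped letter, up to sign), whence $W_1=\pm\Id$. Now $\red(W_1)$ cannot start with an $L$ (since $\red(W_1RW')=\emptyset$ and leading $L$'s are never removed), so $\red(W_1)$ is a concatenation of $s$ blocks $RL^{j_i}$; the associated M\"obius map $f_{j_1}\circ\cdots\circ f_{j_s}$ sends $[0,\infty]$ into a proper generalized Stern--Brocot subinterval unless $s=0$, forcing $\red(W_1)=\emptyset$. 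But then $\red(W)=\red(LW')$, which is nonempty --- a contradiction. For existence the paper takes the \emph{smallest} $n$ for which $X_{-n}^*\dots X_{-1}^*$ is not proper, shows that $X_{-n}^*$ must then be an $R$ deleted during the reduction, so some prefix $X_{-n}^*\dots X_{-(j+1)}^*$ is an excursion, and iterates this observation on the remaining tail until $j=0$, concluding that the whole word $X_{-n}^*\dots X_{-1}^*$ is itself the sought excursion.
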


\begin{proof}
We first prove that an excursion can never be a suffix of a strictly larger excursion.
Let $W=W_1RW'$ be an excursion, with $RW'$ another excursion. Then $WL=W_1RW'L=\pm R$ and $RW'L=\pm R$, which implies that $W_1=\pm\Id$. It follows that $\red(W_1)=\begin{pmatrix}
                         \pm 1 & 0 \\ 0 & \pm 1                                                                                     \end{pmatrix}$.
Observe that $\red(W_1)$ cannot start with $L$'s since $\red (W_1 RW')=\emptyset$. Therefore, it is a concatenation of $s$ blocks, corresponding to some function $f_{j_1}\circ\cdots  f_{j_s}$ which cannot be $x\mapsto\pm x$ unless $s=0$. 
But $s=0$ means that $\red(W_1)=\emptyset$, so $\red(W) = \red (LW')=\emptyset$, which is impossible.

Observe first that, if $X_0^*$ is not flipped during the reduction of $X_{-(n-1)}^*\dots X_0^*$ but is flipped during the reduction of $X_{-n}^*\dots X_0^*$, then $X_{-n}^*$ is an $R$ which is removed during the reduction process of $X_{-n}^*\dots X_0^*$. In particular, this is true if we choose $n$ to be the smallest integer such that $X_0^*$ is flipped during the reduction of $X_{-n}^*\dots X_0^*$. Therefore there exists $0\le j<n$ such that $X_{-n}^*\dots X_{-(j+1)}^*$ is an excursion. 
If $j=0$ we are done; 
otherwise the same observation proves that $X_{-j}^*$ is an $L$ which is flipped during the reduction process of $X_{-n}^*\dots X_{-j}^*$. Therefore, $X_0^*$ is flipped during the reduction of $RX_{-(j-1)}^*\dots X_0^*$, but not during the reduction of $X_{-\ell}^*\dots X_0^*$ for any $\ell \le j-1$.
Iterating the same argument finitely many times proves that $\red(X_{-n}^*\dots X_{-1}^*)=\emptyset$.
\end{proof}

\begin{lemma}
 $$\sum_{w \mbox{\begin{scriptsize} excursions\end{scriptsize}}} \PP(w) < 1 .$$
\end{lemma}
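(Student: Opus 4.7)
The plan is to rewrite the sum as the expected number
\[
\EE[N], \qquad N \egdef \#\{m \ge 1 \colon X_1 \cdots X_m \text{ is an excursion}\},
\]
of excursion prefixes of the i.i.d.\ sequence, and then to show $\EE[N] < 1$ by a renewal argument based on the flipping rule of the reduction.

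First I would introduce the auxiliary quantity $\gamma \egdef \PP(\text{the reduction of }X_1 X_2 \cdots \text{ ever reaches the empty word}\mid X_1 = R)$. Because the reduction only ever removes suffixes of the form $RL^{k-1}$, the initial $R$ remains the leftmost letter of every subsequent reduced word until possibly the entire reduced word has shrunk to $RL^{k-1}$ and is then deleted; and this happens precisely at the instant the reduced word becomes empty. Hence $\gamma$ coincides with the non-survival probability of an appended $R$ introduced in Section~\ref{section:survival}, namely $\gamma = 1 - p_R$. Lemma~\ref{Survival} gives $p_R > 0$, so $\gamma < 1$.

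Next I would iterate a renewal. A first excursion prefix exists precisely when $X_1 = R$ (probability $p$) and the reduction starting from the reduced word $R$ returns to empty (probability $\gamma$), so $\PP(N \ge 1) = p\gamma$. If $X_1 \cdots X_T$ is the first such prefix, the reduction terminates with a deletion, so the flip flag is on and the letter $X_{T+1}$ is appended flipped; by the strong Markov property at the stopping time $T$, the reduction of $(X_n)_{n > T}$ has the same law as that of a fresh i.i.d.\ sequence in which only the first letter is flipped. A second excursion prefix then exists iff this flipped letter equals $R$ (i.e.\ $X_{T+1}=L$, probability $1-p$) and the corresponding reduced word $R$ returns to empty (probability $\gamma$), hence $\PP(N \ge 2\mid N \ge 1) = (1-p)\gamma$. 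Iterating yields $\PP(N \ge j) = (p\gamma)\bigl((1-p)\gamma\bigr)^{j-1}$ for every $j \ge 1$, from which
\[
\sum_{w \text{ excursion}} \PP(w) \;=\; \EE[N] \;=\; \frac{p\gamma}{1-(1-p)\gamma},
\]
which is strictly less than $1$ if and only if $\gamma < 1$, yielding the claim.

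The delicate point will be justifying the renewal rigorously: one must check that the ends of excursion prefixes are stopping times with respect to the natural filtration, that the pending-flip flag is necessarily on at those instants, and that the future $(X_n)_{n > T}$ remains i.i.d.\ and independent of the past. All three follow from the very construction of the reduction (in particular, the flag is on exactly after a deletion) and from the strong Markov property for the finite-state chain tracking the pair (current reduced word, pending-flip flag). Once this is in place, the closed form above combined with $p_R > 0$ gives the desired strict inequality.
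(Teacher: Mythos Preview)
Your argument is correct and yields exactly the same closed form $\sum_{w}\PP(w)=\dfrac{p(1-p_R)}{p+(1-p)p_R}$ as the paper. The paper reaches this identity by a slightly different decomposition: rather than iterating a renewal over successive excursion prefixes and summing the geometric series for $\EE[N]$, it partitions the event $\{X_0=R\text{ does not survive}\}$ according to the \emph{longest} excursion prefix $w$, observing that $w$ is maximal exactly when the next (flipped) letter is $L$ (probability $p$) or a surviving $R$ (probability $(1-p)p_R$), which gives the single identity $p(1-p_R)=\bigl(\sum_{w}\PP(w)\bigr)\bigl[(1-p)p_R+p\bigr]$. Both routes rest on identical probabilistic ingredients---the flip flag is on after a deletion, the future $(X_i)_{i>T}$ is i.i.d.\ and independent of the past, and $p_R>0$ from Lemma~\ref{Survival}; the paper's version is a one-line identity, while yours makes the renewal structure explicit and, incidentally, also determines the full distribution of the number $N$ of excursion prefixes.
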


\begin{proof}
$X_0$ is an $R$ which does not survive during the reduction process if and only if it is the beginning of an excursion. 
By considering the longest such excursion, we get
$$ p(1-p_R) = \sum_{w \mbox{\begin{scriptsize} excursions\end{scriptsize}}} \PP(w) \Bigl[(1-p) p_R+p\Bigr]. $$
Hence,
\begin{equation}
\label{eq:excursion}
 \sum_{w \mbox{\begin{scriptsize} excursions\end{scriptsize}}} \PP(w) = \dfrac{p(1-p_R)}{(1-p) p_R+p} <1.
\end{equation}
\end{proof}

We deduce from the two preceding lemmas:
\begin{corollary}
\label{coro}
There is a positive probability that for all $n>0$ the sequence $X_{-n}^*\dots X_{-1}^*$ be proper.
\end{corollary}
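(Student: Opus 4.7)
The plan is to show that the complementary event has probability strictly less than $1$. Let $A$ denote the event that, for every $n > 0$, the sequence $X_{-n}^*\dots X_{-1}^*$ is proper. I would bound $\PP(A^c)$ using Lemma~\ref{Lem:excursion} together with the summation formula \eqref{eq:excursion}.

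First, I would observe that Lemma~\ref{Lem:excursion} applies directly here: although formally stated around position $0$, its proof uses only the structure of the reduction of the letters strictly to the left of a reference position, so it applies by translation. Consequently, if $A^c$ holds, then there exists some $m > 0$ such that $X_{-m}^*\dots X_{-1}^*$ is an excursion. Setting $E_m \egdef \{X_{-m}^*\dots X_{-1}^* \text{ is an excursion}\}$, this yields
$$
A^c \;\subseteq\; \bigcup_{m \ge 1} E_m.
$$

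The next step is to use the observation, proved at the beginning of Lemma~\ref{Lem:excursion}, that an excursion can never be a proper suffix of a strictly larger excursion. This immediately implies that the events $E_m$ are pairwise disjoint. Since the letters $(X_j^*)$ are i.i.d., regrouping by the length of the excursion gives
$$
\PP(A^c) \;\le\; \PP\biggl( \bigcup_{m \ge 1} E_m \biggr) \;=\; \sum_{m \ge 1} \PP(E_m) \;=\; \sum_{w \text{ excursion}} \PP(w).
$$

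Finally, \eqref{eq:excursion} evaluates this last sum as $p(1-p_R)/[(1-p)p_R + p]$, which is strictly less than $1$ since $p_R > 0$. Hence $\PP(A^c) < 1$ and $\PP(A) > 0$, as required. There is essentially no obstacle: once Lemma~\ref{Lem:excursion} and \eqref{eq:excursion} are in place, the corollary follows almost immediately, the only delicate point being to notice that the pairwise disjointness of the $E_m$ (itself a consequence of the ``no excursion is a suffix of a larger excursion'' observation) is precisely what turns the already-computed value of $\sum_w \PP(w)$ into a bound on $\PP(A^c)$.
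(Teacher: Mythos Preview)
Your proof is correct and follows exactly the route the paper intends: the paper does not write out a proof of the corollary but simply says it is deduced ``from the two preceding lemmas,'' and your argument is precisely the natural way to combine them. One small remark: the comment about applying Lemma~\ref{Lem:excursion} ``by translation'' is unnecessary, since the lemma is already stated for the letters $X_{-n}^*\dots X_{-1}^*$ relative to position~$0$, which is exactly the setting you need; likewise, the uniqueness in the lemma's conclusion already encodes the pairwise disjointness of the events $E_m$, though it does no harm to isolate the suffix observation explicitly.
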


\begin{proof}[Proof of Proposition~\ref{stability}]
We deduce from Corollary~\ref{coro} that with probability 1 there exist infinitely many $j$'s such that 
\begin{itemize}
 \item $X^*_{-j}$ is an $R$ which survives in the reduction of $X_{-j}^*\dots X_0^*$;
 \item $X_{-n}^*\dots X^*_{-j-1}$ is proper for all $n\ge j$.%$X^*_{-j}$ is not flipped in the reduction of $X_{-n}^*\dots X_0^*$
\end{itemize}
For such $j$, the contribution of $X_{-j}^*\dots X_0^*$ to $\red(X_{-n}^*\dots X_0^*)$ is the same for any $n\ge j$.
\end{proof}

The same argument allows us to define almost surely $\red((X^*)_{-\infty}^{n})$ for all $n\in\ZZ$, which is a left-infinite sequence of blocks. Observe that we can associate to each letter of this sequence of blocks the time $t\le n$ at which it was appended. We number the blocks by defining $B_0^n$ as the rightmost block whose initial $R$ was appended at some time $t<0$. 
For $n>0$, we have
$\red((X^*)_{-\infty}^{n})=\ldots B_{-1}^nB_0^nB_1^n\ldots B_{L(n)}^n$ where $0\le L(n)\le n$. The random number $L(n)$ evolves in the same way as the number of $R's$ in $\red(X_3\ldots X_n)$. By Lemma~\ref{Survival}, $L(n)\to +\infty$ as $n\to \infty$ almost surely. As a consequence, for any $j\in\ZZ$ the block $B_j^n$ is well-defined and constant for all large enough $n$. We denote by $B_j$ the limit of $B_j^n$. The concatenation of these blocks can be viewed as the reduction of the whole sequence $(X^*)_{-\infty}^{+\infty}$.
The same arguments as those given in Section~\ref{reductionSection} prove that the blocks $B_j$ are i.i.d. with common distribution law $\PP_{\rho}$. 

It is remarkable that the same result holds if we consider only the blocks in the reduction of $(X^*)_{-\infty}^0$.

\begin{prop}
\label{distribution}
The sequence $\red((X^*)_{-\infty}^0)$ is a left-infinite concatenation of i.i.d. blocks with common distribution law $\PP_{\rho}$.
\end{prop}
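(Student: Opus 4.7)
The proof parallels the argument just given for the i.i.d.\ bi-infinite blocks $(B_j)_{j\in\ZZ}$, with the twist that an $R$ contributing a block to $\mathcal{R}_0\egdef\red((X^*)_{-\infty}^0)$ only needs to avoid being popped by time $0$ rather than forever. Proposition~\ref{stability} already guarantees the a.s.\ existence of $\mathcal{R}_0$ as a left-infinite sequence of blocks, so the remaining task is to pin down their joint law.

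The plan is to extract a regenerative decomposition of $(X^*_n)_{n\le 0}$ from Corollary~\ref{coro}. That corollary yields $\PP(E)>0$ for $E\egdef\{X^*_{-m}\cdots X^*_{-1}\text{ is proper for all }m>0\}$. By stationarity and ergodicity of the shift on the i.i.d.\ sequence $(X^*_n)_{n\in\ZZ}$, the translated events $E_\tau$ a.s.\ occur for infinitely many $\tau\le 0$; enumerate these regeneration times as $\cdots<S_{-2}<S_{-1}<S_0\le 0$ with $S_i\to-\infty$. At each $\tau=S_i$, no flip carries over from the reduction of $(X^*)_{-\infty}^{\tau-1}$ into the processing of $X^*_\tau$, so the reduction from $\tau$ onwards interacts with the past only through the last block present at time $\tau-1$.

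I would then mimic the past/future decoupling of the proof of Lemma~\ref{law}: at each initial $R$-time $t\in[\tau,0]$ of a block of $\mathcal{R}_0$, the event ``an $R$ is appended at time $t$'' is $\sigma(X^*_i:i\le t)$-measurable, while the event ``this $R$ is still present at time $0$'' is $\sigma(X^*_i:t<i\le 0)$-measurable; a similar factorization holds for each $L$ belonging to the block. Applying this block-by-block shows that the blocks of $\mathcal{R}_0$ born in $[\tau,0]$ are mutually independent, independent of the blocks born before $\tau$, and each has law $\PP_\rho$. Letting $i\to-\infty$ captures every block of $\mathcal{R}_0$ and completes the proof.

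The main obstacle is the rightmost block $B^*_0$: its initial $R$ may be ``transient'' (popped at some later, positive time in the bi-infinite reduction), and its tail of $L$'s may be strictly shorter than that of the corresponding bi-infinite block (if further $L$'s survive at positive times). One has to verify that, conditional on the initial $R$-time $t\le 0$ of $B^*_0$, the number of surviving $L$'s in $(t,0]$ still follows the geometric law $\rho^j/\sum_m\rho^m$. This reduces to the calculation of Section~\ref{section:survival}: the key input is that $\rho$ is defined as a local conditional probability attached to the appending of one extra $L$ right after a surviving $R$, so the same geometric series argument yields the $\PP_\rho$ law whether the ``survival'' horizon is finite ($=0$) or infinite.
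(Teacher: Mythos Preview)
Your regeneration scaffolding is a sensible starting point, but the step where you pass from the past/future factorisation to the conclusion ``each block has law $\PP_\rho$'' has a real gap, and your attempted fix in the last paragraph does not close it. The quantity $\rho$ is \emph{not} a local, horizon-free probability: by definition $\rho=1-pp_R/\bigl(p+(1-p)p_R\bigr)$, where $p_R$ is the probability that an appended $R$ is \emph{never} removed. For blocks of $\mathcal R_0$ the relevant survival is only up to time~$0$, and this finite-horizon probability depends on the appending time $t$ and is strictly larger than $p_R$ when $|t|$ is small (it equals~$1$ at $t=0$). Consequently, conditional on the time $t$ at which the initial $R$ of the last block is appended, the law of that block is not $\PP_\rho$: it is the point mass at $R$ when $t=0$, at $RL$ when $t=-1$, and so on. The computation of Section~\ref{section:survival} yields a geometric law precisely because the \emph{same} $p_R$ enters at every stage of the recursion; once the horizon is finite this homogeneity is lost and the calculation does not transfer. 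Nor is the difficulty confined to the rightmost block: any block of $\mathcal R_0$ whose \emph{next} $R$ is transient in the bi-infinite reduction also differs from its bi-infinite counterpart, so the decoupling of Lemma~\ref{law} does not by itself deliver the $\PP_\rho$ marginal for it either.

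The paper sidesteps all finite-horizon computations by a short conditioning argument. Writing $L\le0$ for the index of the last letter kept in the reduction up to time~$0$, one has $\red((X^*)_{-\infty}^0)=\red((X^*)_{-\infty}^L)$, and $\{L=\ell\}$ is the independent conjunction of ``$(X^*)_{-\infty}^\ell$ is proper'' and ``$(X^*)_{\ell+1}^0$ is an excursion''. By stationarity $\EE\bigl[f(\red((X^*)_{-\infty}^\ell))\,\big|\,(X^*)_{-\infty}^\ell\text{ proper}\bigr]$ does not depend on~$\ell$, so summing over $\ell$ shows that the law of $\red((X^*)_{-\infty}^0)$ equals its conditional law given properness; a second conditioning on the $(X^*)_1^\infty$-measurable event that these blocks are definitive then identifies it with the i.i.d.\ $\PP_\rho$ law already established for the bi-infinite blocks.
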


\begin{proof}
Observe that $\red((X^*)_{-\infty}^{0})=\red((X^*)_{-\infty}^{L})$ where $L\le 0$ is the (random) index of the last letter not removed in the reduction process of $(X^*)_{-\infty}^0$. 
For any $\ell\le0$, we have $L=\ell$ if and only if $(X^*)_{-\infty}^\ell$ is proper and $(X^*)_{\ell +1}^{0}$ is an excursion.
For any bounded measurable function $f$, since $\EE\bigl[f(\red((X^*)_{-\infty}^{\ell}))\ \big|\ (X^*)_{-\infty}^{\ell} \mbox{ is proper}\bigr]$ does not depend on $\ell$, we have
\begin{eqnarray*}
\lefteqn{\EE\bigl[f(\red((X^*)_{-\infty}^{0})\bigr]}\\
&=& \sum_{\ell} \PP(L=\ell)\ \EE\bigl[f(\red((X^*)_{-\infty}^{\ell}))\ \big|\ L=\ell\bigr]\\
&=& \sum_{\ell} \PP(L=\ell)\ \EE\bigl[f(\red((X^*)_{-\infty}^{\ell}))\ \big|\ (X^*)_{-\infty}^{\ell} \mbox{ is proper, } (X^*)_{\ell+1}^0 \mbox{ is an excursion}\bigr]\\
&=& \sum_{\ell} \PP(L=\ell)\ \EE\bigl[f(\red((X^*)_{-\infty}^{\ell}))\ \big|\ (X^*)_{-\infty}^{\ell} \mbox{ is proper}\bigr]\\
&=& \EE\bigl[f(\red((X^*)_{-\infty}^{0}))\ \big|\ (X^*)_{-\infty}^{0} \mbox{ is proper}\bigr].
\end{eqnarray*}
This also implies that the law of $\red((X^*)_{-\infty}^{0})$ is neither changed when conditioned on the fact that $(X^*)_{-\infty}^{0}$ is not proper. 

Assume that $(X^*)_{-\infty}^{0}$ is proper. The fact that the blocks of $\red((X^*)_{-\infty}^{0})$ will not be subsequently modified in the reduction process of $(X^*)_{-\infty}^{\infty}$ only depends on $(X^*)_{1}^{\infty}$. Therefore, 
$\EE\bigl[f(\red((X^*)_{-\infty}^{0}))\ \big| \ (X^*)_{-\infty}^{0} \mbox{ is proper}\bigr]$ is equal to
$$\EE\bigl[f(\red((X^*)_{-\infty}^{0}))\ \big| \ (X^*)_{-\infty}^{0} \mbox{ is proper and blocks of } \red((X^*)_{-\infty}^{0}) \mbox{ are definitive} \bigr].
$$
The same equality holds if we replace ``proper'' with ``not proper''.
Hence, the law of $\red((X^*)_{-\infty}^{0})$ is the same as the law of $\red((X^*)_{-\infty}^{0})$ conditioned on the fact that blocks of $\red((X^*)_{-\infty}^{0})$ are definitive. But we know that definitive blocks are i.i.d. with common distribution law $\PP_{\rho}$.
\end{proof}

\subsection{Quotient associated to a left-infinite sequence}
Let $n$ be a fixed integer. For $m\ge0$, we decompose $\red((X^*)_{n-m< i\le n})$ into blocks $B_\ell, \ldots, B_1 = (RL^{j_\ell}), \ldots, (RL^{j_1})$, to which we associate the generalized Stern-Brocot interval $I_{j_1, j_2, \dots, j_{\ell}}$. 
If we let $m$ go to infinity, the preceding section shows that this sequence of intervals converges almost surely to a point $q_n^*$. By Proposition~\ref{distribution}, $q_n^*$ follows the law $\nu_{k, \rho}$.

Since $(q_n^*)$ is an ergodic stationary process, and $\log(\cdot)$ is in $L^1(\nu_{k, \rho})$, the ergodic theorem implies
\begin{equation}
\label{ergodic}
 \dfrac{1}{N} \sum_{n=1}^N \log q_n^* \tend{N}{\infty} \int_{\RR_+} \log q \, d\nu_{k, \rho} (q)\quad\mbox{almost surely.}
\end{equation}
The last step in the proof of the main theorem is to compare the quotient $q_n = F_n^r/F_{n-1}^r$ with $q_n^*$.
\begin{prop}
\label{comparison}
$$\dfrac{1}{N} \sum_{n=3}^N \bigl|\log q_n^* - \log |q_n|\bigr| \tend{N}{\infty} 0 \quad\mbox{almost surely.}$$
\end{prop}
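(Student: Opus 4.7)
The plan is to partition the index range according to whether the last many blocks of $\red(X_3\ldots X_n)$ and $\red((X^*)_{-\infty}^n)$ coincide. For each fixed $\ell\ge 1$, let $T_n(\ell)$ be the length of the maximal common suffix (in blocks) of these two reduced sequences. The two main steps are: (i) show $T_n(\ell)\ge\ell$ almost surely for $n$ large enough, so that $|q_n|$ and $q_n^*$ share a common rank-$\ell$ generalized Stern-Brocot interval; (ii) bound the Cesaro average using this, then let $\ell\to\infty$ and control the boundary intervals via $\log$-integrability of $\nu_{k,\rho}$.

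For step (i), I would adapt the argument of Proposition~\ref{stability}. The reduction of $(X^*)_{-\infty}^n$ is obtained by appending $X_3\ldots X_n$ to $\red((X^*)_{-\infty}^{2})$: the only difference with $\red(X_3\ldots X_n)$ comes from the junction, namely a possible sign flip of the first appended letter if $\red((X^*)_{-\infty}^2)$ is not proper, and a bounded number of boundary cancellations with the trailing letters of $\red((X^*)_{-\infty}^2)$. These interactions affect only a bounded prefix of the resulting block sequence; once $\red(X_3\ldots X_n)$ has produced enough surviving $R$'s past this boundary (and by Lemma~\ref{Survival} the number of surviving $R$'s tends almost surely to infinity), the two reductions share the same $\ell$-block suffix. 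Hence $T_n(\ell)\ge \ell$ for all $n\ge N(\ell)$, where $N(\ell)$ is a random but almost surely finite index.

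For step (ii), note that on $\{T_n(\ell)\ge\ell\}$ the quotients $|q_n|$ and $q_n^*$ lie in a common rank-$\ell$ interval $I_{j_1,\ldots,j_\ell}$. By Lemma~\ref{L+} (and its analogue for the two-sided process), $q_n>0$ and $q_n^*>0$ for $n$ large enough. For any $\varepsilon>0$, I would decompose
\[
\frac{1}{N}\sum_{n=3}^N\bigl|\log|q_n|-\log q_n^*\bigr|\le \frac{C_\ell}{N}+\frac{1}{N}\sum_{n=3}^N \ind{T_n\ge\ell,\ q_n^*\in[\varepsilon,1/\varepsilon]}D_n(\ell)+\frac{1}{N}\sum_{n=3}^N \ind{T_n\ge\ell,\ q_n^*\notin[\varepsilon,1/\varepsilon]}\bigl(\bigl|\log|q_n|\bigr|+|\log q_n^*|\bigr),
\]
where $D_n(\ell)$ is the log-diameter of the common rank-$\ell$ interval and $C_\ell$ is a finite random constant coming from the indices $n<N(\ell)$. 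The first term vanishes as $N\to\infty$. The second term converges, by the ergodic theorem applied to $(q_n^*)$, to $\int \ind{x\in[\varepsilon,1/\varepsilon]}D(\ell)(x)\,d\nu_{k,\rho}(x)$; on $[\varepsilon,1/\varepsilon]$ the quantity $D(\ell)$ is bounded by $2|\log\varepsilon|$, and tends to $0$ pointwise as $\ell\to\infty$ by Lemma~\ref{generation}, so bounded convergence gives $0$ in the limit $\ell\to\infty$. The third term: since on $\{T_n\ge\ell\}$ the two quantities $|q_n|$ and $q_n^*$ sit in the same small interval, we can bound $\bigl|\log|q_n|\bigr|\le|\log q_n^*|+D_n(\ell)$; applying the ergodic theorem once more, the limsup is controlled by $\int_{[\varepsilon,1/\varepsilon]^c}|\log x|\,d\nu_{k,\rho}(x)$, which tends to $0$ as $\varepsilon\to 0$ by $L^1(\nu_{k,\rho})$-integrability of $\log$. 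Letting first $N\to\infty$, then $\ell\to\infty$, then $\varepsilon\to 0$ gives the desired conclusion.

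The main obstacle is handling the extremal intervals: the rank-$\ell$ intervals touching $0$ or $\infty$ never shrink to a point, so the naive log-diameter bound fails there. This is precisely where the $\log$-integrability of $\nu_{k,\rho}$ (already invoked for \eqref{ergodic}) becomes essential, together with the companion bound $\bigl|\log|q_n|\bigr|\le|\log q_n^*|+D_n(\ell)$ valid on the coupling event $\{T_n\ge\ell\}$, which transfers the integrability from $(q_n^*)$ to $(|q_n|)$ along the relevant subsequence.
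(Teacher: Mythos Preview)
Your step~(i) heads in the right direction but is sketchy: the assertion that the boundary interaction between $\red((X^*)_{-\infty}^{2})$ and $X_3\ldots X_n$ affects ``only a bounded prefix'' is exactly what needs proof, and it does not follow from Lemma~\ref{Survival} alone. The paper makes this precise via the events $E_j$ (the $R$ at time $j$ survives to the right \emph{and} the left tail is proper), and uses the ergodic theorem to show that such $j$'s have positive density. Notice that your own informal argument, if carried through, actually yields more than you state: once the boundary region is passed, \emph{all} subsequent blocks coincide, so the number of common blocks grows linearly in~$n$, not merely past any fixed level~$\ell$.

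The genuine gap is in step~(ii), precisely at the point you flag as the ``main obstacle.'' On the extremal rank-$\ell$ intervals $[0,1/(\ell\lambda)]$ and $[\ell\lambda,\infty]$ the log-diameter $D_n(\ell)$ is infinite, so your companion bound $\bigl|\log|q_n|\bigr|\le|\log q_n^*|+D_n(\ell)$ is vacuous there, and $\log$-integrability of $\nu_{k,\rho}$ only controls $|\log q_n^*|$, not $|\log |q_n||$. Since for any fixed $\ell$ the event $\{q_n^*\in\text{extremal rank-}\ell\}$ has positive $\nu_{k,\rho}$-probability, the Ces\`aro sum over those indices cannot be bounded by your decomposition; there is no mechanism in your scheme to ``transfer integrability'' to $|q_n|$ on that set. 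The paper avoids this entirely by exploiting the linear growth of common blocks to take a rank that \emph{grows with $n$} (specifically $\sqrt{n}$): then $\nu_{k,\rho}$ of the extremal rank-$\sqrt{n}$ interval decays exponentially, and Borel--Cantelli ensures that $q_n^*$ lies in such an interval for only finitely many~$n$. From that point on Lemma~\ref{sup} handles everything. To repair your argument, you should replace the fixed-$\ell$-then-$\ell\to\infty$ scheme by a diagonal choice $\ell=\ell(n)\to\infty$, which your step~(i) already supports once properly justified.
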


We call \emph{extremal} the leftmost and rightmost intervals of $\I(\ell)$.

\begin{lemma}
\label{sup}
$$
s_\ell \egdef \sup_{\substack{I\in\I(\ell) \\ I\mbox{\scriptsize not extremal}}} \sup_{q, q^* \in I} |\log q^* - \log q| \tend{\ell}{\infty} 0
$$
\end{lemma}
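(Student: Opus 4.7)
The plan is to exploit the explicit algebraic structure of the alternating pattern $(0,k-2)$ and to proceed by a case split on the level at which a given non-extremal label first deviates from it.

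First I would establish the identity $f_0 \circ f_{k-2}(q) = 2\lambda + q$, which is immediate from $f_0(q) = \lambda + 1/q$ and $f_{k-2}(q) = 1/(\lambda + q)$ (the latter read off from~\eqref{RtimesPowersOfL}). By iteration, the composition associated to the alternating pattern $(0,k-2,0,k-2,\ldots)$ of even length $2m$ is the translation $q \mapsto 2m\lambda + q$. This identifies the two extremal intervals of $\I(\ell)$ as $I_{0,k-2,0,\ldots} = [\ell\lambda, \infty]$ and, via the $x \mapsto 1/x$ symmetry from Lemma~\ref{endpoints}, $I_{k-2,0,k-2,\ldots} = [0, 1/(\ell\lambda)]$.

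For a non-extremal sequence $(j_1,\ldots,j_\ell)$, let $\ell' \le \ell$ be the smallest index at which $(j_1,\ldots,j_{\ell'})$ deviates from both alternating patterns. Using the translation identity to collapse the alternating prefix $(j_1,\ldots,j_{\ell'-1})$, together with a short case analysis on $j_{\ell'}$ (combined with the $x \mapsto 1/x$ symmetry if the deviation occurs on the $0$ side), one checks that $I_{j_1,\ldots,j_{\ell'}}$ lies in an interval of the form $[(\ell'-1)\lambda,\ell'\lambda]$. Since further refinement cannot increase the log-diameter,
$$
\log\text{-diam}(I_{j_1,\ldots,j_\ell}) \;\le\; \log\text{-diam}(I_{j_1,\ldots,j_{\ell'}}) \;\le\; \log\frac{\ell'}{\ell'-1} \;=\; O(1/\ell'),
$$
which handles all non-extremal intervals whose first deviation occurs late.

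The remaining case is $\ell'$ staying bounded, say $\ell' \le L$. I then write $I_{j_1,\ldots,j_\ell} = M(I_{j_{\ell'+1},\ldots,j_\ell})$ where $M := f_{j_1} \circ \cdots \circ f_{j_{\ell'}}$ is one of finitely many M\"obius maps sending $[0,\infty]$ onto the fixed compact sub-interval $I_{j_1,\ldots,j_{\ell'}} \subset (0,\infty)$. The inner interval, at level $\ell - \ell'$ of the Stern-Brocot subdivision of $[0,\infty]$, is either extremal (hence a shrinking neighborhood of the cusp $0$ or $\infty$, whose image under the smooth $M$ is a shrinking neighborhood of the interior point $M(0)$ or $M(\infty)$), or non-extremal (in which case its log-diameter tends to $0$ by induction on the level). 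Since $M$ is Lipschitz in logarithmic coordinates on every compact sub-interval of $(0,\infty)$, the log-diameter of the image tends to $0$ as $\ell - \ell' \to \infty$, uniformly over the finitely many choices of $M$. Picking $L = L(\varepsilon)$ so that $\log(L/(L-1)) < \varepsilon/2$ and then $\ell$ large enough, both cases yield $s_\ell < \varepsilon$. The main technical obstacle is this final uniform recursive control, but it reduces to the elementary fact that a M\"obius map whose image is a compact sub-interval of $(0,\infty)$ has denominator bounded away from zero on $[0,\infty]$, hence bounded derivative.
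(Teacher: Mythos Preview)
Your first case (large first-deviation index $\ell'$) is correct and coincides with the paper's treatment of non-extremal intervals lying outside a fixed compact $[1/M\lambda,\, M\lambda]$: in both, such an interval is trapped between consecutive multiples of $\lambda$ (or their reciprocals), giving log-diameter at most $\log\bigl(\ell'/(\ell'-1)\bigr)$.

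The gap is in your second case. When the inner interval $J = I_{j_{\ell'+1},\ldots,j_\ell}$ is non-extremal, you invoke ``induction on the level'' to assert that its log-diameter tends to $0$; but this is precisely the statement $s_\ell \to 0$ you are trying to prove, and there is no well-posed induction for a limit statement. Even granting that the log-diameter of $J$ is small, your stated property that $M$ is Lipschitz in logarithmic coordinates \emph{on every compact sub-interval of $(0,\infty)$} does not suffice, since $J$ need not lie in any fixed compact (for instance $J=[100\lambda,101\lambda]$ has small log-diameter but Euclidean diameter $\lambda$); and your closing remark about bounded derivative in the ordinary sense does not control log-diameters of images of such $J$. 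The approach can be rescued by checking that each admissible $M$ --- having strictly positive matrix entries and determinant $\pm1$ --- is in fact a strict log-contraction on all of $(0,\infty)$, with constant $(\alpha\delta+\beta\gamma+2\sqrt{\alpha\beta\gamma\delta})^{-1}<1$; a $\limsup$ argument then closes the recursion. But this is extra work you did not supply. The paper avoids recursion altogether: for intervals contained in a fixed compact $[1/M\lambda,\,M\lambda]$ it simply invokes Lemma~\ref{generation} (density of endpoints forces Euclidean diameters to shrink uniformly on compacts) together with the uniform continuity of $\log$ there.
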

\begin{proof}
 Fix $\varepsilon >0$, and choose an integer $M>1/\varepsilon$. By Lemma~\ref{generation}, since $\log(\cdot)$ is uniformly continuous on $[1/M\lambda, M\lambda]$, we have for $\ell$ large enough
$$
\sup_{\substack{I\in\I(\ell) \\ I\subset [1/M\lambda, M\lambda]}} \sup_{q, q^* \in I} |\log q^* - \log q| \le \varepsilon .
$$
If $I\in\I(\ell)$ is a non-extremal interval included in $[0, 1/M\lambda]$ or in $[M\lambda, +\infty]$, there exists an integer $j\in[M, \ell]$ such that $I\subset [1/(j+1)\lambda, 1/j\lambda]$ or $I\subset [(j+1)\lambda, j\lambda]$. Hence, 
$$
\sup_{q, q^* \in I} |\log q^* - \log q| \le \log\left( \dfrac{j+1}{j}\right) \le \log\left( 1+\dfrac{1}{M}\right)\le \varepsilon .
$$
\end{proof}

\begin{proof}[Proof of Proposition~\ref{comparison}]
For any $j\in\ZZ$, we define the following event $E_j$:
\begin{itemize}
 \item $X^*_{j}$ is an $R$ which survives in the reduction of $(X_{i}^*)_{i\ge j}$;
 \item $X_{i}^*\dots X^*_{j-1}$ is proper for all $i< j$.
\end{itemize}
Observe that if $E_j$ holds for some $j\ge3$, then for all $n\ge j$, 
\begin{eqnarray*}
\red(X_3\dots X_n) &=& \red(X_3\dots X_{j-1})\ \red(X_j\dots X_n)\\
\mbox{and}\quad\red((X^*)_{-\infty}^{n}) &=& \red((X^*)_{-\infty}^{j-1})\ \red(X^*_j\dots X^*_n).
\end{eqnarray*}
Hence, since $X_j\ldots X_n=X_j^*\ldots X_n^*$, they give rise in both reductions to the same blocks, the first one being definitive.
Since each $E_j$ holds with the same positive probability, the ergodic theorem yields
\begin{equation}
 \label{E}
 \dfrac{1}{n}\sum_{j=3}^n \ind{E_j}\tend{n}{\infty} \PP(E_3) >0\quad\mbox{almost surely,}
\end{equation} 
hence the number of definitive blocks of $\red(X_3\dots X_n)$ and of $\red((X^*)_{-\infty}^{n})$ which coincide grows almost surely linearly with $n$ as $n$ goes to~$\infty$ (these definitive blocks may be followed by some additional blocks which also coincide).

Recall the definition of $L_+$ given in Lemma~\ref{L+} and observe that for $n\ge n_{L_+}$, $q_n>0$. 
Observe also that, by definition of $I_{j_1, j_2, \dots, j_\ell}$, if $q$ and $q^*$ are two positive real numbers, $f_{j_1}\circ f_{j_2}\circ \cdots \circ f_{j_\ell}(q)$ and $f_{j_1}\circ f_{j_2}\circ \cdots \circ f_{j_\ell}(q^*)$ belong to the same interval of $\I(\ell)$. 

From~\eqref{E}, we deduce that, almost surely, for $n$ large enough, at least $L_+ +\sqrt{n}$ definitive blocks of $\red(X_3\dots X_n)$ and of $\red((X^*)_{-\infty}^{n})$ coincide (possibly followed by some additional blocks which also coincide).
This ensures that $q_n$ and $q_n^*$ belong to the same interval of the subdivision $\I(\sqrt{n})$.

By Lemma~\ref{sup}, it remains to check that, almost surely, there exist only finitely many $n$'s such that $q_n^*$ belongs to an extremal interval of the subdivision $\I(\sqrt{n})$. But this is a direct application of Borel-Cantelli Lemma, observing that the measure $\nu_{k, \rho}$ of an extremal interval of $\I(\ell)$ decreases exponentially fast with $\ell$.
\end{proof}

% \begin{proof}[Proof of Theorem~\ref{MainTheorem}, linear case]
%  Since $F_n=\pm F_n^r$, we can write $n^{-1}\log |F_n|$ as
% $$
% \dfrac{1}{n} \log |F_2| + \dfrac{1}{n}\sum_{j=3}^n \log q_j^* + \dfrac{1}{n}\sum_{j=3}^n \left(\log |q_j|-\log q_j^*\right),
% $$
% and the result follows using Proposition~\ref{comparison} and \eqref{ergodic}.
% \end{proof}
% 

We now conclude the section by the proof of the convergence to the integral given in Theorem~\ref{MainTheorem}, linear case:
Since $F_n=\pm F_n^r$, we can write $n^{-1}\log |F_n|$ as
$$
\dfrac{1}{n} \log |F_2| + \dfrac{1}{n}\sum_{j=3}^n \log q_j^* + \dfrac{1}{n}\sum_{j=3}^n \left(\log |q_j|-\log q_j^*\right),
$$
and the convergence follows using Proposition~\ref{comparison} and \eqref{ergodic}.

\section{Positivity of the integral}
\label{Section:positivity}
We now turn to the proof of the positivity of $\gamma_{p,\lambda_k}$. It relies on the following lemma, whose proof is postponed.

\begin{lemma}
 \label{lemme:nu_rho}
Fix $0<\rho<1$. For any $t>0$, 
\begin{equation}\label{eq:positivity}
\Delta_t := \nu_{k, \rho}\left([t, \infty)\right) - \nu_{k, \rho}\left([0, 1/t]\right) \ge 0.
\end{equation}
Moreover, there exists $t>1$ such that the above inequality is strict.
\end{lemma}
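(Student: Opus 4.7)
The plan is to reformulate the inequality via the inversion symmetry of the Stern--Brocot construction, reducing it to a monotonicity statement in the parameter $\rho$. Using Lemma~\ref{inverse} together with the algebraic identity $Z(\rho)=\rho^{k-2}Z(1/\rho)$ (where $Z(\rho)=\sum_{m=0}^{k-2}\rho^m$), the partition $\I(\ell)$ is stable under $x\mapsto 1/x$ via $I_{j_1,\dots,j_\ell}\leftrightarrow I_{k-2-j_1,\dots,k-2-j_\ell}$, and checking masses shows that the pushforward of $\nu_{k,\rho}$ by inversion is exactly $\nu_{k,1/\rho}$. Writing $G_\rho(t):=\nu_{k,\rho}([t,\infty))$, this gives $\nu_{k,\rho}([0,1/t])=G_{1/\rho}(t)$, so
$$\Delta_t=G_\rho(t)-G_{1/\rho}(t),$$
and it is enough to prove the stronger claim that $\rho\mapsto G_\rho(t)$ is non-increasing on $(0,\infty)$ for every $t>0$, since then $\rho<1<1/\rho$ yields $G_\rho(t)\ge G_{1/\rho}(t)$.

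For monotonicity, I would begin with the first-level endpoints $t=b_{j^*}$, $1\le j^*\le k-2$, where the Markov invariance of $\nu_{k,\rho}$ produces the closed form $G_\rho(b_{j^*})=(1-\rho^{j^*})/(1-\rho^{k-1})$. Setting $a=j^*$, $b=k-1$, $c=b-a$, direct differentiation yields
$$\partial_\rho G_\rho(b_{j^*})=-\frac{\rho^{a-1}}{(1-\rho^b)^2}\bigl(a+c\rho^b-b\rho^c\bigr).$$
The bracketed function vanishes at $\rho=1$, equals $a\ge 1$ at $\rho=0$, goes to $+\infty$ as $\rho\to\infty$ (since $b>c$), and its derivative $bc(\rho^{b-1}-\rho^{c-1})$ vanishes only at $\rho=1$; so $a+c\rho^b-b\rho^c>0$ on $(0,\infty)\setminus\{1\}$, giving $\partial_\rho G_\rho(b_{j^*})<0$ there. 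For general $t\in(b_{j^*+1},b_{j^*})$, the invariance gives the recursion
$$G_\rho(t)=\sum_{j<j^*}\frac{\rho^j}{Z(\rho)}+\frac{\rho^{j^*}}{Z(\rho)}\bigl(1-G_\rho(f_{j^*}^{-1}(t))\bigr),$$
and one iterates down through finer subdivisions, using density of $\bigcup_\ell\E(\ell)$ in $\RR_+$ (Proposition~\ref{finiteCF}) to approximate an arbitrary $t$ by endpoints.

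The strict inequality at some $t>1$ follows from explicit computation at $t=1$: using that $f_{(k-2)/2}$ fixes $1$ for $k$ even, or that $b_{(k-1)/2}=1$ for $k$ odd, the recursion yields $G_\rho(1)=1/(1+\rho^m)$ with $m=\lfloor(k-1)/2\rfloor$, hence $\Delta_1=(1-\rho^m)/(1+\rho^m)>0$; right-continuity of $\Delta_t$, which follows from atomlessness of $\nu_{k,\rho}$ (an immediate consequence of $\nu_{k,\rho}(I_{j_1,\dots,j_\ell})\to 0$ as $\ell\to\infty$), then transfers strict positivity to some $t>1$.

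The main obstacle lies in the monotonicity extension to arbitrary $t$: differentiating the recursion in $\rho$ introduces a term $-p_{j^*}'(\rho)G_\rho(f_{j^*}^{-1}(t))$ whose sign depends on whether $j^*\lessgtr\bar J_\rho:=\sum j\rho^j/Z(\rho)$, so a direct induction on the depth of $t$ is not conclusive. The cleanest resolution is to work at each finite level $\ell$ with the truncation $G_\rho^{(\ell)}(t):=\sum\{\nu_{k,\rho}(I):I\in\I(\ell),\,I\subset[t,\infty)\}$, for which a short calculation gives
$$\partial_\rho G_\rho^{(\ell)}(t)=\frac{1}{\rho}\,\mathrm{Cov}_{\nu_{k,\rho}}\!\Bigl(\textstyle\sum_{i=1}^\ell j_i,\ \mathbf{1}_{\{X\in R_\ell(t)\}}\Bigr),$$
with $R_\ell(t)$ the corresponding cylinder union. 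Establishing non-positivity of this signed covariance --- which exploits the fact that large values of $\sum j_i$ correspond, in an aggregated sense, to cylinders positioned further to the left --- is the technical core, and once it is secured, passing to the limit $\ell\to\infty$ completes the monotonicity of $G_\rho(t)$ in $\rho$ and hence the lemma.
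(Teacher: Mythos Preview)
Your reformulation via the inversion symmetry is correct and elegant: the identity $\iota_*\nu_{k,\rho}=\nu_{k,1/\rho}$ (where $\iota(x)=1/x$) does hold, and it yields $\Delta_t=G_\rho(t)-G_{1/\rho}(t)$ as you write. The computation of $\Delta_1$ is also fine.

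However, the core of your strategy is to deduce $G_\rho(t)\ge G_{1/\rho}(t)$ from the \emph{stronger} claim that $\rho\mapsto G_\rho(t)$ is non-increasing on $(0,\infty)$ (equivalently, that your covariance is non-positive), and this stronger claim is \emph{false}. Take $k=3$ and the level-$2$ endpoint $t=2$: a direct computation gives
\[
G_\rho(2)=\nu_{3,\rho}(I_{0,1})=\frac{\rho}{(1+\rho)^2},
\]
which is \emph{increasing} on $(0,1)$ and decreasing on $(1,\infty)$; your covariance $\mathrm{Cov}(S,\mathbf 1_R)=\rho\,\partial_\rho G_\rho(2)=\rho(1-\rho)/(1+\rho)^3$ is strictly positive for $\rho<1$. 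Similarly, for $t=1/2$ one gets $G_\rho(1/2)=(\rho^2+\rho+1)/(1+\rho)^2$, with a strict minimum at $\rho=1$. In both cases $G_\rho(t)=G_{1/\rho}(t)$, so $\Delta_t=0$: the lemma holds with equality there, but it cannot be obtained through monotonicity in $\rho$. The heuristic ``large $\sum j_i$ corresponds to cylinders further left'' breaks down because every $f_j$ is orientation-reversing, so the lexicographic order on $(j_1,\dots,j_\ell)$ alternates with depth rather than aligning with the real order.

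For comparison, the paper's proof does not pass through any monotonicity in $\rho$. It works directly with $\Delta_t$ and proceeds by induction on the Stern--Brocot level of $t$: an explicit computation handles all endpoints of $\I(1)$ and $\I(2)$, and then the self-similarity $\nu_{k,\rho}([t,\infty))=\nu_{k,\rho}([t_2,\infty))+\nu_{k,\rho}([t_1,t_2])\,\nu_{k,\rho}([u,\infty))$ (together with its mirror under $x\mapsto 1/x$) reduces level $\ell$ to levels $\ell-2$ and $2$. Your inversion identity could still be useful in organising such an argument, but the monotonicity route, as stated, does not go through.
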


Using Fubini's theorem, we obtain that $\gamma_{p,\lambda_k}$ is equal to
\begin{eqnarray*}
\int_0^\infty \log x\, d\nu_{k, \rho} (x)
&=& \int_1^\infty \log x\, d\nu_{k, \rho} (x)-\int_0^1 \log (1/x)\, d\nu_{k, \rho} (x)\\
&=& \int_0^\infty \nu_{k, \rho} ([e^{u},\infty)) du - \int_0^\infty \nu_{k, \rho} ([0, e^{-u}]) du 
\end{eqnarray*}
which is positive if $0<\rho<1$ by Lemma~\ref{lemme:nu_rho}. 
Thus, $\gamma_{p,\lambda_k}>0$ for any $p>0$. % and $\widetilde \gamma_{p,\lambda_k}>0$ for any $p>1/k$
This ends the proof of Theorem~\ref{MainTheorem}, linear case.

\begin{proof}[Proof of Lemma~\ref{lemme:nu_rho}]
By Lemma~\ref{generation}, it is enough to prove the lemma when $t$ is the endpoint of an interval of the subdivision $\I(\ell)$. 
This is done by induction on $\ell$. Obviously, $\Delta_0=\Delta_\infty=0$.
When $\ell=1$ and $\ell=2$, if $t\neq 0,\infty$, it can be written as $f_j(b_i)$ for $0\le j\le k-2$ and $0\le i\le k-2$, and we get $1/t=f_{k-2-j}(b_{k-1-i})$ (see Lemma~\ref{inverse}).
Setting $Z:=\sum_{s=0}^{k-2}\rho^s$, we have 
$$
\nu_{k, \rho}\left([t, \infty)\right) = \sum_{s=0}^{j-1} \nu_{k, \rho}\left([b_{s+1}, b_s)\right) +\nu_{k, \rho}\left([t, b_{j})\right)
=\sum_{s=0}^{j-1}\frac{\rho^s}{Z} + \frac{\rho^j}{Z} \nu_{k, \rho}\left([0,b_i]\right)= \sum_{s=0}^{j-1}\frac{\rho^s}{Z} + \frac{\rho^j}{Z}\sum_{s=i}^{k-2}\frac{\rho^s}{Z}.
$$
Therefore,
\begin{eqnarray*}
\lefteqn{\nu_{k, \rho}\left([t, \infty)\right) - \nu_{k, \rho}\left([0, 1/t]\right)} \\
&=& \sum_{s=0}^{j-1}\frac{\rho^s}{Z} + \frac{\rho^j}{Z}\sum_{s=i}^{k-2}\frac{\rho^s}{Z} 
-\left( \sum_{s=k-1-j}^{k-2}\frac{\rho^s}{Z} + \frac{\rho^{k-2-j}}{Z} \ \sum_{s=0}^{k-2-i}\frac{\rho^s}{Z}\right)\\
&=& \sum_{s=0}^{j-1}\frac{\rho^s}{Z}\left(1-\rho^{k-1-j}\right) + \frac{1}{Z}\left(\rho^{i+j}-\rho^{k-2-j}\right)\sum_{s=0}^{k-2-i}\frac{\rho^s}{Z}.
\end{eqnarray*}
Since $i\le k-2$, we have $\rho^{i+j}-\rho^{k-2-j}\ge \rho^{k-2-j}(\rho^{2j}-1)$. Moreover, $\sum_{s=0}^{k-2-i}\frac{\rho^s}{Z}\le 1$. Thus, 
$$
Z\Delta_t \ge 
\sum_{s=0}^{j-1}\rho^s\left(1-\rho^{k-1-j}\right) - \rho^{k-2-j}(1-\rho^{2j}).
$$
Observe that $(1-\rho^{k-1-j}) = (1-\rho)\sum_{s=0}^{k-2-j}\rho^s$ and that $1-\rho^{2j}=(1+\rho^{j})(1-\rho)\sum_{s=0}^{j-1}\rho^s$. 
Hence, 
$$
Z\Delta_t
\ge (1-\rho) \sum_{s=0}^{j-1}\rho^s \left(\sum_{s=0}^{k-2-j}\rho^s - \rho^{k-2-j}(1+\rho^{j})\right),
$$
which is positive as soon as $j<k-2$. 
The quantity $\Delta_t$ is invariant when $t$ is replaced by $1/t$, so we also get the desired result for $j=k-2$.

Assume~\eqref{eq:positivity} is true for any endpoint of intervals of the subdivision $\I(j)$, $j\le\ell-1$. 
Let $t$ be an endpoint of an interval of $\I(\ell)$; then there exists an interval $[t_1, t_2]$ of $\I(\ell-2)$ such that $t\in[t_1, t_2]$. We can write 
\begin{eqnarray*}
\nu_{k, \rho}\left([t, \infty)\right) &=& \nu_{k, \rho}\left([t_2, \infty)\right) + \nu_{k, \rho}\left([t_1, t_2]\right) \nu_{k, \rho}\left([u, \infty)\right)\\
\mbox{and }\nu_{k, \rho}\left([0, 1/t]\right) &=& \nu_{k, \rho}\left([0, 1/t_2]\right) + \nu_{k, \rho}\left([1/t_2, 1/t_1]\right)  \nu_{k, \rho}\left([0, 1/u]\right)
\end{eqnarray*}
for some endpoint $u$ of an interval of $\I(2)$. 
If $\nu_{k, \rho}\left([t_1, t_2]\right)\ge \nu_{k, \rho}\left([1/t_2, 1/t_1]\right)$, we get the result since~\eqref{eq:positivity} holds for $u$, and $t_2$. 
Otherwise, we can write $\Delta_t$ as
$$ \Delta_{t_1}
- \nu_{k, \rho}\left([t_1, t_2]\right) + \nu_{k, \rho}\left([1/t_2, 1/t_1]\right)
+ \nu_{k, \rho}\left([t_1, t_2]\right) \nu_{k, \rho}\left([u, \infty)\right)-\nu_{k, \rho}\left([1/t_2, 1/t_1]\right)  \nu_{k, \rho}\left([0, 1/u]\right) 
$$
which is greater than 
$$ 
\Delta_{t_1} + \nu_{k, \rho}\left([t_1, t_2]\right) \Delta_u\ge 0.
$$
\end{proof}

\begin{remark}
\label{rho = 1}
 We can also define the probability measure $\nu_{k,\rho}$ for $\rho=1$. (When $k=3$, this is related to Minkowski's Question Mark Function, see \cite{denjoy1938}.)
It is straightforward to check that $\nu_{k,1}\left([t, \infty)\right) - \nu_{k, 1}\left([0, 1/t]\right) = 0$ for all $t>0$, which yields 
$$
\int_0^\infty \log x\, d\nu_{k, 1} (x) = 0.
$$
\end{remark}

\section{Reduction: The non-linear case}
\label{Section:non-linear}
In the non-linear case, where 
$\widetilde F_{n+2} = |\lambda \widetilde F_{n+1} \pm \widetilde F_{n}|$, the sequence $(\widetilde F_n)_{n\ge1}$ can also be coded by the sequence $(X_n)_{n\ge 3}$ of i.i.d. random variables taking values in the alphabet $\{R, L\}$ with probability $(p, 1-p)$. 
Each $R$ corresponds to choosing the $+$ sign and can be interpreted as the right multiplication of $(\widetilde F_{n-1}, \widetilde F_n)$ by the matrix $R$ defined in~\eqref{matrices}. Each $L$ corresponds to choosing the $-$ sign but the interpretation in terms of matrices is slighty different, since we have to take into account the absolute value: $X_{n+1}=L$ corresponds either to the right multiplication of $(\widetilde F_{n-1}, \widetilde F_n)$ by $L$ if $(\widetilde F_{n-1}, \widetilde F_n)L$ has nonnegative entries, or to the multiplication by 
\begin{equation}
\label{matrices2}
L' \egdef 
\begin{pmatrix}
0 & 1\\
1 & -\lambda
\end{pmatrix}.
\end{equation}
Observe that for all $0\le j\le k-2$, the matrix $RL^j$ has nonnegative entries (see~\eqref{RtimesPowersOfL}), whereas 
$RL^{k-1} = 
\begin{pmatrix}
1 & 0\\
0 & -1
\end{pmatrix}$. 
Therefore, if $X_i=R$ is followed by some $L$'s, we interpret the first $(k-2)$ $L$'s as the right multiplication by the matrix $L$, whereas the $(k-1)$-th $L$ corresponds to the multiplication by $L'$. 
Moreover, $RL^{k-2}L' = \Id$, so we can remove all patterns $RL^{k-1}$ in the process $(X_n)$.

\newcommand{\tred}{\widetilde{\red}}
We thus associate to $x_3\ldots x_n$ the word $\tred(x_3\ldots x_n)$, which is obtained by the same reduction as $\red(x_3\ldots x_n)$, except that the letter added in Step 1 is always $x_{i}$. 
We have
$$ (\widetilde F_{n-1},\widetilde F_n) = (\widetilde F_1,\widetilde F_2) \tred(x_3\ldots x_n). $$

Since the reduction process is even easier in the non-linear case, we will not give all the details but only insist on the differences with the linear case. 
The first difference is that the survival probability of an $R$ is positive only if $p>1/k$.

\begin{lemma}
\label{SurvivalNL}
For $p > 1/k$, the number of $R$'s in $\tred(X_3\ldots X_n)$ satisfies
$$
|\tred(X_3\ldots X_n)|_R\tend{n}{\infty}+\infty\qquad\mbox{a.s.}
$$
and the survival probability $p_R$ is for $p<1$ the unique solution in $]0, 1]$ of 
\begin{equation}
\label{pRNL}
\tilde g(x)=0,\quad \mbox{ where }\quad  \tilde g(x)\egdef(1-x)\left(1+\frac{p}{1-p}\ x\right)^{k-1} - 1\ . 
\end{equation}
If $p\le 1/k$, $p_R=0$.
\end{lemma}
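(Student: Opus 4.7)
The plan is to follow the blueprint of Section~\ref{section:survival}, adapted to the simplification that in the non-linear reduction $\tred$ no letter is flipped after a deletion. The argument splits naturally into a counting identity for excursions, a Markovian derivation of an equation satisfied by $p_R$, and a concavity analysis that yields the dichotomy at $p=1/k$.

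First I would show that every excursion $w$ (that is, every $w$ with $\tred(w)=\emptyset$) satisfies $|w|_L=(k-1)|w|_R$. The induction is straightforward: the reduction of $w$ must terminate by removal of a final $RL^{k-1}$ block, and the hypothesis then applies to the residual word. Summing over the (disjoint) excursions removed while computing $\tred(X_3\ldots X_n)$, and writing $N_R,N_L$ for the letter counts in $X_3\ldots X_n$ and $S_n,T_n$ for those of $\tred(X_3\ldots X_n)$, one gets
\begin{equation*}
N_L-T_n=(k-1)(N_R-S_n).
\end{equation*}
Using $T_n\ge 0$ and the SLLN, $S_n/n\ge(kp-1)/(k-1)+o(1)$ almost surely, which is strictly positive when $p>1/k$, proving $|\tred|_R\to\infty$ a.s. Moreover, applying the ergodic theorem to $\ind{\{X_i=R,\,X_i\text{ survives}\}}$ gives density of dying $R$'s equal to $p(1-p_R)$; since each excursion contains $k-1$ $L$'s per $R$, the density of dying $L$'s equals $(k-1)p(1-p_R)$ and cannot exceed the density of all $L$'s, so $(k-1)p(1-p_R)\le 1-p$. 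For $p>1/k$ this forces $p_R>0$.

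For the equation, I would exploit the fact that the non-linear reduction renews after each excursion. Fix a newly appended $R$ at position $n$. From position $n+1$ onward, each fresh position produces exactly one of three events: an $L$ (probability $1-p$), a surviving $R$ (probability $pp_R$), or a dying $R$ opening an excursion that is eventually removed (probability $p(1-p_R)$), after which the process restarts in distribution. If $q$ denotes the probability that the next non-excursion letter is an $L$, decomposition on the first event yields
\begin{equation*}
q=(1-p)+p(1-p_R)q,\qquad\text{hence}\qquad q=\frac{1-p}{1-p+pp_R}.
\end{equation*}
Because letters are never flipped, the $R$ at position $n$ is eventually removed iff the next $k-1$ non-excursion letters are all $L$'s, so $1-p_R=q^{k-1}$; rearranging produces $\tilde g(p_R)=0$.

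Finally, writing $h(x):=\log(1-x)+(k-1)\log(1+\tfrac{p}{1-p}x)$ on $[0,1)$, so that $\tilde g(x)=e^{h(x)}-1$, one checks $h(0)=0$, $h'(0)=(kp-1)/(1-p)$, and $h$ strictly concave. If $p>1/k$, then $h'(0)>0$ and $h(x)\to-\infty$ at $1^-$, giving $\tilde g$ a unique zero in $(0,1)$; combined with $p_R>0$ from the counting step, this identifies $p_R$ with that zero. If $p\le 1/k$, then $h'(0)\le 0$ and strict concavity give $h<0$ on $(0,1)$, so $\tilde g$ has no positive root and the equation $\tilde g(p_R)=0$ forces $p_R=0$. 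The main technical point, as in the linear case, is to justify the renewal restart in the Markovian decomposition, for which one needs excursions to have a.s.\ finite length; this follows from a geometric tail estimate analogous to the one used in the proof of Lemma~\ref{Survival}.
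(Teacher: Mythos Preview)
Your proposal is correct and follows essentially the same approach as the paper: both use the observation that each deletion removes exactly one $R$ together with $k-1$ $L$'s (your excursion identity $|w|_L=(k-1)|w|_R$), combined with the law of large numbers, to get $|\tred|_R\to\infty$ for $p>1/k$; both derive the equation for $p_R$ via a renewal decomposition over excursions; and both conclude by a convexity/concavity analysis of the same function. The only cosmetic differences are that you compute the ``next non-excursion letter'' probability $q$ and then set $1-p_R=q^{k-1}$, whereas the paper sums the probabilities $p_j=(1-p)^jpp_R/(1-p+pp_R)^{j+1}$ directly, and you work with $h=\log(\tilde g+1)$ rather than $\tilde g$ itself. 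One small remark: your final comment that ``one needs excursions to have a.s.\ finite length'' is slightly misstated---what you actually use is that an $R$ which eventually dies does so in finite time, which is tautological; unconditional finiteness fails precisely when $p_R>0$.
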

\begin{proof}
Since each deletion of an $R$ goes with the deletion of $(k-1)$ $L$'s, if $p>1/k$, the law of large numbers ensures that the number of remaining $R$'s goes to infinity. If $p<1/k$, there only remains $L$'s, so $p_R=0$. 

Doing the same computations as in Section~\ref{section:survival}, we obtain that, for all $0\le j\le k-2$, the probability $p_j$ for an $R$ to be followed by $L^jR$ after the subsequent steps of the reduction is 
$$p_j=\frac{(1-p)^j p p_R}{(1-p+pp_R)^{j+1}}.$$
Since $p_R=\sum_{j=0}^{k-2}p_j$, we get that $p_R$ is solution of $\tilde g(x)=0$. 
Observe that $\tilde g(0)=0$, $\tilde g(1)=-1$, $\tilde g'(0)>0$ for $p>1/k$ and $\tilde g'$ vanishes at most once on $\RR_+$.
Hence, for $p>1/k$, $p_R$ is the unique solution of $\tilde g(x)=0$ in $]0, 1]$.
For $p=1/k$, $\tilde g'(0)=0$ and the unique nonnegative solution is $p_R=0$.
\end{proof}

\subsection{Case $p>1/k$}
As in the linear case, the sequence of surviving letters 
$$ (S_j)_{j\ge3} = \lim_{n\to\infty} \tred(X_3\ldots X_n) $$
is well defined for $p>1/k$, and can be written as the concatenation of a certain number $s\ge 0$ of starting $L$'s and of blocks: 
$$
S_1S_2 \ldots = L^s B_1B_2\ldots
$$
where for all $\ell\ge1$, $B_\ell\in\{R, RL, \ldots, RL^{k-2}\}$. 
These blocks appear with the same distribution $\PP_{\rho}$ as in the linear case, but with a different parameter $\rho$.

\begin{lemma}
 \label{lawNL}
In the non-linear case, for $p>1/k$, the blocks $(B_\ell)_{\ell\ge1}$ are i.i.d. with common distribution law $\PP_{\rho}$ defined by~\eqref{defPrho}, where $\rho \egdef \sqrt[k-1]{1-p_R}$ and $p_R$ is given by Lemma~\ref{SurvivalNL}.
\end{lemma}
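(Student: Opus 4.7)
The plan is to imitate the proof of Lemma~\ref{law} in the linear case, since only the explicit expression of $\rho$ in terms of $p$ has changed. The two main steps are: (i) proving that the surviving blocks are independent with a common distribution via a measurability/independence splitting, and (ii) identifying this common distribution with $\PP_\rho$ by an algebraic manipulation of the fixed-point equation~\eqref{pRNL} defining $p_R$.

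For step (i), I would observe that the event ``the letter appended at time $n$ is an $R$ which survives'' factors as the intersection of an event in $\sigma(X_i,\ i\le n)$ (the $R$ is appended at that time) and an event in $\sigma(X_i,\ i>n)$ (once appended, this $R$ survives in the non-linear reduction $\tred$). Conditioning on this event therefore preserves the independence between past and future, and iterating this remark shows that the successive surviving blocks in $\tred(X_3\ldots X_n)$ are i.i.d. This is a verbatim transposition of the linear argument.

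For step (ii), I would reuse the intermediate quantities $p_j$ already computed inside the proof of Lemma~\ref{SurvivalNL}, namely
$$ p_j = \frac{pp_R(1-p)^j}{(1-p+pp_R)^{j+1}},\qquad 0\le j\le k-2, $$
so that $\PP(B_1 = RL^j) = p_j/p_R$. It remains to show that $p_j/p_R = \rho^j/\sum_{m=0}^{k-2}\rho^m$ for $\rho \egdef (1-p_R)^{1/(k-1)}$. Plugging the identity $1-p_R = \rho^{k-1}$ into equation~\eqref{pRNL} and extracting the positive $(k-1)$-th root yields $1+\frac{p}{1-p}p_R = 1/\rho$, whence
$$ \frac{1-p}{1-p+pp_R} = \rho \quad\text{and}\quad \frac{pp_R}{1-p+pp_R} = 1-\rho. $$
Substituting into the expression for $p_j$ gives $p_j = (1-\rho)\rho^j$, and summing over $j$ from $0$ to $k-2$ recovers $p_R = 1-\rho^{k-1}$; dividing then produces the claimed expression $p_j/p_R = \rho^j/\sum_{m=0}^{k-2}\rho^m$.

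I do not expect a real obstacle here: the independence part is an exact replica of the linear case, and the distribution computation is a one-line manipulation of the equation defining $p_R$. The only marginally delicate point is to pick the correct $(k-1)$-th root when inverting~\eqref{pRNL}, which is immediate since every quantity involved is positive.
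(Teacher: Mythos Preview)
Your proposal is correct and follows exactly the approach the paper intends: the paper does not give a separate proof of this lemma, relying instead on the statement that ``the reduction process is even easier in the non-linear case'' and on the formulas for $p_j$ already derived in the proof of Lemma~\ref{SurvivalNL}. Your two steps---the measurability splitting for independence (verbatim from Lemma~\ref{law}) and the algebraic identification $\frac{1-p}{1-p+pp_R}=\rho$ obtained by taking the $(k-1)$-th root in~\eqref{pRNL}---are precisely what is needed to fill in the details.
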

As in Section~\ref{reductioninfinie}, we can embed the sequence $(X_n)_{n\ge 3}$ in a doubly-infinite i.i.d. sequence $(X_n^*)_{n\in\ZZ}$ with $X_n=X_n^*$ for all $n\ge 3$. 
We define the reduction of $(X^*)_{-\infty< j\le n}$ by considering the successive $\tred(X_{n-N}\dots X_n)$. 
The analog of Proposition~\ref{stability} is easier to prove than in the linear case since the deletion of a pattern $RL^{k-1}$ does not affect the next letter. The end of the proof is similar.

\subsection{Case $p\le 1/k$}
\label{Sec:p=0}
Since in this case the survival probability of an $R$ is $p_R=0$, the reduced sequence $\tred(X_0^{\infty})$ contains only $L$'s. 
We consider the subsequence $(\widetilde F_{n_j})$ where $n_j$ is the time when the $j$-th $L$ is appended to the reduced sequence. 
This subsequence satisfies, for any $j$, $\widetilde F_{n_{j+1}} = |\lambda \widetilde F_{n_{j}}-\widetilde F_{n_{j-1}}|$, which corresponds to the non-linear case for $p=0$. 

Therefore, we first concentrate on the deterministic sequence $\widetilde F_{n+1}= |\lambda \widetilde F_{n}-\widetilde F_{n-1}|$, with given nonnegative initial values $\widetilde F_0$ and $\widetilde F_1$.

\begin{prop}
\label{bounded}
 For any choice of $\widetilde F_0\ge 0$ and $\widetilde F_1\ge 0$, the sequence defined inductively by $\widetilde F_{n+1}= |\lambda \widetilde F_{n}-\widetilde F_{n-1}|$ is bounded.
\end{prop}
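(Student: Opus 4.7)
The plan is to exhibit a nonincreasing Lyapunov function that dominates $\widetilde F_n^2$. The natural candidate is dictated by the linear recurrence $G_{n+1} = \lambda G_n - G_{n-1}$, whose characteristic polynomial $x^2 - \lambda x + 1$ has roots $e^{\pm i\pi/k}$ on the unit circle and which therefore preserves the quadratic form
$$
Q_n \egdef \widetilde F_n^2 - \lambda \widetilde F_n \widetilde F_{n-1} + \widetilde F_{n-1}^2.
$$
Since $\lambda = 2\cos(\pi/k) < 2$, the associated symmetric matrix $\begin{pmatrix} 1 & -\lambda/2 \\ -\lambda/2 & 1 \end{pmatrix}$ has eigenvalues $1\pm\lambda/2>0$, so $Q$ is positive definite and in particular $Q_n \ge (1 - \lambda/2)(\widetilde F_n^2 + \widetilde F_{n-1}^2)$.

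The next step is to verify that $Q_n$ is nonincreasing under the \emph{nonlinear} recurrence, by splitting according to the sign of $\lambda \widetilde F_n - \widetilde F_{n-1}$. When this quantity is nonnegative, the absolute value is inactive and $Q_{n+1}=Q_n$ by standard conservation. When it is negative, substituting $\widetilde F_{n+1} = \widetilde F_{n-1} - \lambda \widetilde F_n$ into the definition of $Q_{n+1}$ and simplifying gives
$$
Q_{n+1} - Q_n = 2\lambda \widetilde F_n(\lambda \widetilde F_n - \widetilde F_{n-1}) \le 0,
$$
since $\widetilde F_n \ge 0$ and $\lambda \widetilde F_n - \widetilde F_{n-1} < 0$ in this case.

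Combining $Q_n \le Q_1$ with the positive-definiteness lower bound then yields $\widetilde F_n^2 \le Q_1/(1 - \lambda/2)$ for all $n \ge 1$, which is the desired bound. There is no real obstacle: the whole argument reduces to guessing the correct conserved quadratic form, which is naturally suggested by the fact that the underlying linear map acts as a rotation in the metric induced by the characteristic polynomial. The only delicate point is positive-definiteness, which degenerates exactly at $\lambda = 2$, so the argument crucially uses the hypothesis $\lambda < 2$, that is, $k \ge 3$.
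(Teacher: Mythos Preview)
Your argument is correct. The quadratic form $Q_n=\widetilde F_n^2-\lambda\widetilde F_n\widetilde F_{n-1}+\widetilde F_{n-1}^2$ is indeed positive definite for $0<\lambda<2$, is exactly preserved by the linear step $\widetilde F_{n+1}=\lambda\widetilde F_n-\widetilde F_{n-1}$, and your computation $Q_{n+1}-Q_n=2\lambda\widetilde F_n(\lambda\widetilde F_n-\widetilde F_{n-1})\le 0$ in the reflected case is accurate.

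This is in fact an algebraic rendering of the paper's geometric proof rather than an unrelated approach. The paper represents $(\widetilde F_{n-1},\widetilde F_n)$ as the abscissae of two points on a circle of radius $R_n$ making angle $\theta=\pi/k$, observes that the linear recurrence is rotation by $\theta$ (so $R_{n+1}=R_n$), and shows by a trigonometric argument that when the absolute value intervenes the new radius strictly decreases. A direct computation gives $Q_n=R_n^2\sin^2\theta$, so your Lyapunov function is precisely the squared radius up to the fixed factor $\sin^2\theta$; the two monotonicity statements are the same fact. What your formulation buys is economy: no ``existence of the circle'' lemma, no case analysis on the argument $\alpha$, just a two-line substitution. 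What the paper's version buys is the geometric picture explaining \emph{why} such a conserved quantity exists (rotation on the unit circle of the characteristic roots) and why the reflection step contracts. Both proofs work for every $0<\lambda<2$, not just $\lambda=\lambda_k$, and both degenerate at $\lambda=2$ where $Q$ becomes semidefinite and the circle becomes a line.
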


Lemma~\ref{reverse} in the next section gives a proof of this proposition for the specific case $\lambda =2\cos\pi/k $. 
We give here another proof based on a geometrical interpretation, which can be applied for any $0<\lambda <2$.

The key argument relies on the following observation: Let $\theta$ be such that $\lambda=2\cos\theta$. 
Fix two points $P_0,P_1$ on a circle centered at the origin $O$, such that the oriented angle $(OP_0,OP_1)$ equals $\theta$. 
Let $P_2$ be the image of $P_1$ by the rotation of angle $\theta$ and center $O$. Then the respective abscissae $x_0$, $x_1$ and $x_2$ of $P_0$, $P_1$ and $P_2$ satisfy $x_2=\lambda x_1 - x_0$. 
We can then geometrically interpret the sequence $(\widetilde F_n)$ as the successive abscissae of points in the plane. 

\begin{lemma}[Existence of the circle]
  Let $\theta\in ]0,\pi[$. For any choice of $(x, x')\in\RR_+^2\setminus\{(0,0)\}$, their exist a unique $R>0$ and two points $M$ and $M'$, with respective abscissae $x$ and $x'$, lying on the circle with radius $R$ centered at the origin, such that the oriented angle $(OM,OM')$ equals~$\theta$.
\end{lemma}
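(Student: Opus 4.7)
I would identify the plane with $\CC$ and encode the oriented-angle condition as a rotation. If $M$ and $M'$ lie on the circle of radius $R$ about $O=0$ with oriented angle $(OM,OM')$ equal to $\theta$, then, in complex notation, this is equivalent to $M'=e^{i\theta}M$. Writing $M=x+iy$ and $M'=x'+iy'$, the equation $M'=e^{i\theta}M$ separates into
\begin{equation*}
x'=x\cos\theta - y\sin\theta, \qquad y'=x\sin\theta + y\cos\theta,
\end{equation*}
which is a triangular real $2\times 2$ linear system in the unknown ordinates $(y,y')$, once the abscissae $x$ and $x'$ have been prescribed. Since $\theta\in ]0,\pi[$ gives $\sin\theta\neq 0$, the first equation determines $y$ uniquely, and the second then yields $y'$.

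Having fixed $y$, the radius is forced by $R^2=x^2+y^2$, and substituting the explicit formula for $y$ obtained above gives
\begin{equation*}
R^2=\frac{x^2-2xx'\cos\theta + x'^2}{\sin^2\theta}.
\end{equation*}
A symmetric computation shows that $x'^2+y'^2$ equals the same right-hand side, confirming that both $M$ and $M'$ lie on a common circle of that radius and that the construction is consistent.

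The only point requiring care is positivity of $R^2$ under the hypothesis $(x,x')\neq(0,0)$. I would view the numerator as a quadratic in $x$: its discriminant is $4x'^2(\cos^2\theta-1)$, which is strictly negative whenever $x'\neq 0$ because $\theta\in ]0,\pi[$ forces $|\cos\theta|<1$; together with the trivial case $x'=0,\,x\neq 0$, this shows the numerator is strictly positive on $\RR^2\setminus\{(0,0)\}$. Hence $R$ is the unique positive square root of the above expression, and $M,M'$ are then uniquely determined by the explicit formulas for $y$ and $y'$. There is no serious obstacle: the lemma reduces to a direct rotation computation, the only subtlety being that both $\sin\theta\neq 0$ (to invert the linear system) and $|\cos\theta|<1$ (for positivity of $R^2$) are needed, and both are guaranteed by $\theta\in ]0,\pi[$.
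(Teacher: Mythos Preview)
Your proof is correct. Both you and the paper encode the oriented-angle condition as a rotation by $\theta$ and then solve for the unknown coordinate of $M$, so the core idea is the same. The difference is in parametrization: the paper writes $M=Re^{it}$ in polar form, solves $R\cos t=x$, $R\cos(t+\theta)=x'$ for $(R,t)$, and must treat the case $x=0$ separately (since it divides by $x$); you work in Cartesian coordinates, solving $x'=x\cos\theta-y\sin\theta$ directly for $y$, which handles all $(x,x')\neq(0,0)$ uniformly and yields the closed form $R^2=(x^2-2xx'\cos\theta+x'^2)/\sin^2\theta$. Your approach is slightly cleaner and in fact works for all $(x,x')\in\RR^2\setminus\{(0,0)\}$, not just nonnegative pairs. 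One small remark: the paper's proof ends by noting that when $x'>0$ one has $t+\theta<\pi/2$, a fact used immediately afterwards in the proof of Proposition~\ref{bounded}; in your coordinates the equivalent observation is that $y'=x\sin\theta+y\cos\theta$ can be checked to be negative when $x,x'\ge 0$ are not both zero (equivalently, $M'$ lies in the lower half-plane), which you may want to record if you need it downstream.
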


\begin{proof}
Assume that $x>0$. We have to show the existence of a unique $R$ and a unique $t\in ]-\pi/2,\pi/2[$ (which represents the argument of $M$) such that 
$$ R\cos t=x\quad\mbox{and}\quad R\cos(t+\theta)=x'. $$
This is equivalent to 
$$R\cos t=x \quad\mbox{and}\quad  \cos\theta - \tan t\ \sin \theta = \dfrac{x'}{x},$$
which obviously has a unique solution since $\sin \theta\neq 0$.

If $x=0$, the unique solution is clearly  $R=x'/\cos(\theta-\pi/2)$ and $t=-\pi/2$.

\emph{Remark:} Since $x_1>0$, we have $t+\theta<\pi/2$.
\end{proof}

\begin{proof}[Proof of Proposition~\ref{bounded}]
At step $n$, we interpret $\widetilde F_{n+1}$ in the following way: Applying the lemma with $x=\widetilde F_{n-1}$ and $x'=\widetilde F_n$, we find a circle of radius $R_n>0$ centered at the origin and two points $M$ and $M'$ on this circle with abscissae $x$ and $x'$. Consider the image of $M'$ by the rotation of angle $\theta$ and center $O$. If its abscissa is nonnegative, it is equal to $\widetilde F_{n+1}$, and we will have $R_{n+1}=R_n$. Otherwise, we have to apply also the symmetry with respect to the origin to get a point with abscissa $\widetilde F_{n+1}$. The circle at step $n+1$ may then have a different radius, but we now show that the radius always decreases (see Figure~\ref{Fig:cercle}).

\begin{figure}
\input{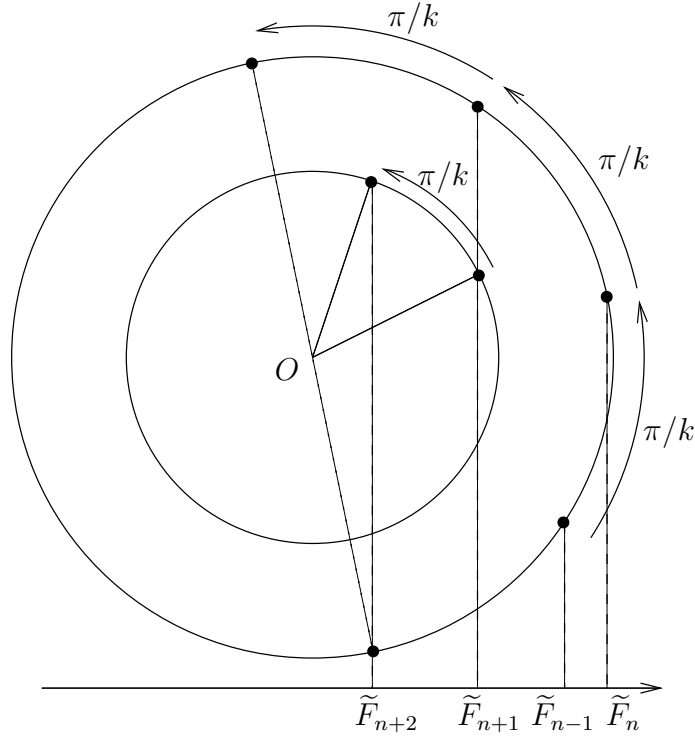}
\caption{$R_{n}=R_{n+1}$ is the radius of the largest circle, and $R_{n+2}$ is the radius of the smallest.}
\label{Fig:cercle}
\end{figure}

Indeed, denoting by $\alpha$ the argument of $M'$, we have in the latter case $\pi/2-\theta<\alpha\le \pi/2$, $\widetilde F_{n}=R_n\cos \alpha$ and $\widetilde F_{n+1}=R_n\cos (\alpha+\theta+\pi)> 0$.
At step $n+1$, we apply the lemma with $x=R_n\cos \alpha$ and $x'=R_n\cos (\alpha+\theta+\pi)$. 
From the proof of the lemma, if $\widetilde F_{n}=0$ (\textit{i.e.} if $\alpha=\pi/2$), $R_{n+1}=R_n\cos (\alpha+\theta+\pi)/\cos(\theta-\pi/2)=R_n$. 
If $\widetilde F_{n}>0$, we have $R_{n+1}=R_n\cos \alpha/\cos t$, where $t$ is given by 
$$
\cos\theta - \tan t\ \sin \theta = \dfrac{\cos (\alpha+\theta+\pi)}{\cos \alpha} = -(\cos\theta - \tan \alpha\ \sin \theta).
$$
We deduce from the preceding formula that $\tan t + \tan\alpha = 2\cos\theta/\sin\theta>0$, which implies $t>-\alpha$. 
On the other hand, as noticed at the end of the proof of the preceding lemma, $t+\theta<\pi/2$, hence $t<\alpha$.
Therefore, $\cos \alpha<\cos t$ and $R_{n+1}<R_n$.

Since $\widetilde F_n\le R_n\le R_1$ for all $n$, the proposition is proved. 
\end{proof}

We come back to the specific case $\lambda=2\cos\pi/k$.
\begin{prop}
\label{p=0}
Let $(\widetilde F_{n})$ be inductively defined by $\widetilde F_{n+1}= |\lambda \widetilde F_{n}-\widetilde F_{n-1}|$ and its two first positive terms.
The following properties are equivalent:
\begin{enumerate}
\item $\widetilde F_0/\widetilde F_1$ admits a finite $\lambda$-continued fraction expansion.
\item The sequence $(\widetilde F_{n})$ is ultimately periodic.
\item There exists $n$ such that $\widetilde F_{n}=0$.
\end{enumerate}
\end{prop}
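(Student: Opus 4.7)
My plan is to work throughout with the ratio $s_n \egdef \widetilde F_n / \widetilde F_{n+1}$ (taking $s_n = \infty$ when $\widetilde F_{n+1} = 0$), which satisfies the one-variable recurrence $s_{n+1} = T(s_n)$ with $T(s) \egdef 1/|\lambda - s|$. In these terms (3) says exactly that the forward orbit of $s_0$ under $T$ reaches $\lambda$, since $T(\lambda) = \infty$ corresponds to the next $\widetilde F$ being $0$. A short direct computation using $f_0(q) = \lambda + 1/q$ and $f(q) = \lambda - 1/q$ yields the two identities that will drive the whole argument: $T \circ f_0 = \mathrm{Id}$ and $T \circ f_j = f_{j-1}$ for $1 \le j \le k-2$.

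For $(3) \Rightarrow (2)$, let $N$ be the first index with $\widetilde F_N = 0$ and set $c \egdef \widetilde F_{N-1} > 0$. By \eqref{PowersOfL} all entries of $(0, c) L^j$ are nonnegative for $0 \le j \le k-1$, so no absolute-value flip occurs between times $N$ and $N + k - 1$, at which point the pair is back to $(c, 0)$; the next step triggers a flip and returns the pair to $(0, c)$, giving period $k$. Conversely, for $(2) \Rightarrow (3)$, ultimate periodicity forces the pair $(\widetilde F_n, \widetilde F_{n+1})$ to take only finitely many values, hence so does the radius $R_n$ introduced in the proof of Proposition~\ref{bounded}. Since $R_n$ is weakly decreasing and strictly decreases at every flip, only finitely many flips can occur. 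Beyond the last flip the pair evolves linearly under $L$, and the relation $L^k = -\mathrm{Id}$ combined with nonnegativity of the $\widetilde F_n$ forces the pair to be $(0, 0)$, hence $\widetilde F_N = 0$ for some $N$.

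For $(3) \Rightarrow (1)$ I would invert the dynamics of $T$. From $s_{N-1} = \infty$ and $s_{N-2} = \lambda$ (since $\widetilde F_{N-2} = \lambda \widetilde F_{N-1}$), the relation $s_{n+1} = 1/|\lambda - s_n|$ gives $s_n = \lambda + \epsilon_n / s_{n+1}$ with $\epsilon_n \in \{-1, +1\}$ determined by the sign of $\lambda - s_n$. Unwinding backward $N-2$ times produces an explicit finite nested expression for $s_0 = \widetilde F_0 / \widetilde F_1$, which, after the routine rewriting $\epsilon/x = 1/(\epsilon x)$, becomes a finite $\lambda$-Rosen continued fraction for $s_0$ with all partial quotients in $\{-1, +1\}$.

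The main obstacle is the converse $(1) \Rightarrow (3)$, where the interplay between the continued-fraction structure of $s_0$ and the forward dynamics of $T$ must be handled carefully. My plan is to combine Proposition~\ref{finiteCF} with the observation $f_{k-2}(\infty) = 0$ to write any $s_0$ admitting a finite $\lambda$-CF as $s_0 = f_{j_1} \circ f_{j_2} \circ \cdots \circ f_{j_\ell}(\infty)$ for some $j_1, \ldots, j_\ell \in \{0, \ldots, k-2\}$. The key identities $T \circ f_j = f_{j-1}$ and $T \circ f_0 = \mathrm{Id}$ then allow one to strip off each outer factor $f_{j_i}$ by exactly $j_i + 1$ applications of $T$, so that after $N \egdef \sum_{i=1}^\ell (j_i + 1)$ iterations we obtain $T^N(s_0) = \infty$, i.e.\ $\widetilde F_{N+1} = 0$. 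This completes the equivalence.
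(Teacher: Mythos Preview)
Your proof is correct and follows the same overall scheme as the paper: $(2)\Leftrightarrow(3)$ via the radius argument from Proposition~\ref{bounded}, and the remaining implications by reducing the ``complexity'' of the ratio one step at a time. The organization of $(1)\Rightarrow(3)$ is, however, genuinely different in flavor. The paper works directly with \emph{standard} $\lambda$-Rosen expansions $[1,a_1,\ldots,a_j]_\lambda$ (coefficients $\pm1$, no $k-1$ consecutive alternations) and shows by hand that one step of the recurrence produces a standard expansion of smaller length for $\widetilde F_1/\widetilde F_2$. You instead use the block representation $s_0=f_{j_1}\circ\cdots\circ f_{j_\ell}(\infty)$ coming from Proposition~\ref{finiteCF} together with the clean identities $T\circ f_0=\mathrm{Id}$ and $T\circ f_j=f_{j-1}$, which peel off one $f$ per application of $T$. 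This is the same induction in different coordinates, but your formulation makes the mechanism more transparent and avoids the sign bookkeeping of the paper's argument. One small point: in $(2)\Rightarrow(3)$, the radius $R_n$ need not strictly decrease at a flip when $\widetilde F_n=0$ (see the case $\alpha=\pi/2$ in the proof of Proposition~\ref{bounded}); your argument is still fine because if some $\widetilde F_n=0$ then $(3)$ already holds, so one may assume all $\widetilde F_n>0$ and then every flip is strict.
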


\begin{proof}
We easily see from the proof of Proposition~\ref{bounded} that (2) and (3) are equivalent. 
We now prove that (3) implies (1) by induction on the smallest $n$ such that $\widetilde F_{n}=0$. 
If $\widetilde F_{2}=0$, then $|\lambda \widetilde F_{1}-\widetilde F_{0}|=0$, and we get $\widetilde F_0/\widetilde F_1=\lambda$. 
Let $n>2$ be the smallest $n$ such that $\widetilde F_{n}=0$. By the induction hypothesis, $\widetilde F_1/\widetilde F_2$ admits a finite $\lambda$-continued fraction expansion. 
Therefore, 
$$\frac{\widetilde F_0}{\widetilde F_1}= \lambda\pm \frac{1}{\widetilde F_1/\widetilde F_2}$$
admits a finite $\lambda$-continued fraction expansion. 

It remains to prove that (1) implies (3). 
We know from Proposition~\ref{finiteCF} that all positive real numbers that admit a finite $\lambda$-continued fraction expansion are endpoints of generalized Stern-Brocot intervals, hence by~\eqref{fj}, can be written as $[1, a_1, \dots, a_j]_{\lambda}$ with $a_i=\pm1$ for any $i$ and such that we never see more than $(k-1)$ alternated $\pm1$ in a row. We call such an expansion a \emph{standard expansion}.
Conversely, all real numbers that admit a standard expansion are endpoints of generalized Stern-Brocot intervals, hence are nonnegative.
Assume (1) is true. If $\widetilde F_0/\widetilde F_1 = [1]_{\lambda}$, then $\widetilde F_2=0$.
Otherwise, let $[1, a_1, \dots, a_j]_{\lambda}$ be a standard expansion of $\widetilde F_0/\widetilde F_1$.
Then, 
$$
\frac{\widetilde F_1}{\widetilde F_2} = \frac{1}{|\lambda - \widetilde F_0/\widetilde F_1|} 
= \Bigl|[a_1, \dots, a_j]_{\lambda}\Bigr|.
$$
If $a_1=1$, then $[a_1, \dots, a_j]_{\lambda}\ge0$ and it is equal to $\widetilde F_1/\widetilde F_2$. 
Otherwise, $\widetilde F_1/\widetilde F_2 = [-a_1, -a_2,\dots, -a_j]_{\lambda}$. 
In both cases, we obtain a standard expansion of $\widetilde F_1/\widetilde F_2$ of smaller size.
The result is proved by induction on $j$.
\end{proof}

\begin{remark}
In general, if $\widetilde F_0/\widetilde F_1$ does not admit a finite $\lambda$-continued fraction expansion, $(\widetilde F_{n})$ decreases exponentially fast to $0$. However, the exponent depends on the ratio $\widetilde F_0/\widetilde F_1$.
\end{remark}
We exhibit two examples of such behavior.

Let $q:=(\lambda + \sqrt{\lambda^2+4})/2$ be the fixed point of $f_0$. Start with $\widetilde F_0/\widetilde F_1 = q$. 
Then, by a straightforward induction, we get that for all $n\ge0$, $\widetilde F_n = q^{-n}\widetilde F_0$.

Start now with $\widetilde F_0/\widetilde F_1 = q'$, where $q'$ is the fixed point of $f_1$. 
Then, we easily get that for all $n\ge0$, $\widetilde F_{2n} = (q'f_0(q'))^{-n}\widetilde F_0$ and $\widetilde F_{2n+1} = \widetilde F_{2n}/q'$.
The exponent is thus $1/\sqrt{q'f_0(q')}$, which is different from $1/q$: For $k=3$, $q=\phi$ (the golden ratio) and $\sqrt{q'f_0(q')}=\sqrt{\phi}$. 

\begin{proof}[Proof of Theorem~\ref{Theorem2}]
We have seen that the subsequence $(\widetilde F_{n_j})$, where $n_j$ is the time when the $j$-th $L$ is appended to the reduced sequence, satisfies, $\widetilde F_{n_{j+1}} = |\lambda \widetilde F_{n_{j}}-\widetilde F_{n_{j-1}}|$ for any $j$.
From Proposition~\ref{bounded}, this subsequence is bounded.
Moreover, we can write $n_j=j+kd_j$, where $d_j$ is the number of $R$'s up to time $n_j$. 
By the law of large numbers, $d_j/n_j\to p$, and we get $j/n_j\to 1-kp$. This achieves the proof of Theorem~\ref{Theorem2}.
\end{proof}

\section{Case $\lambda \ge2$}
\label{Sec:lambda2}

The case $\lambda\ge2$ ($p>0$) is even easier to study since there is no reduction process. 

Observe that the linear and the non-linear case are essentially the same. 
Indeed, in the non-linear case, $\PP(\widetilde F_{n+1}/\widetilde F_n\ge 1| \widetilde F_{n-1}, \widetilde F_n)\ge p$ and if $\widetilde F_{n+1}/\widetilde F_n\ge 1$, then $\widetilde F_{n+2}/\widetilde F_{n+1}\ge 1$. 
Therefore, with probability $1$, there exists $N_+$ such that for all $n\ge N_+$, the quotients $\widetilde F_{n+1}/\widetilde F_n$ are larger than $1$. 
Moreover, for $n\ge N_+$, there is no need to take the absolute value and the sequence behaves like in the linear case. 
We thus concentrate on the linear case. 

We now fix $\lambda\ge 2$. The sequence of quotients $Q_n\egdef F_{n}/F_{n-1}$ is a real-valued Markov chain with probability transitions
$$
\PP\left( Q_{n+1} = f_R(q) \Big| Q_{n} = q \right) = p 
\quad\mbox{ and }\quad
\PP\left( Q_{n+1} = f_L(q) \Big| Q_{n} = q \right) = 1-p, 
$$
where $f_R(q)\egdef \lambda +1/q$ and $f_L(q)\egdef \lambda  -1/q$.

Let $B\egdef\dfrac{\lambda+\sqrt{\lambda^2-4}}{2}\in [1, \lambda]$ be the largest fixed point of $f_L$. 
Note that we have $\PP(Q_{n+1}\ge \lambda | Q_n)\ge\min(p, 1-p)$ for any $n\ge 2$ and, again, if $Q_n\ge B$, then $Q_{n+1}\ge B$. 
Thus, with probability $1$, there exists $N_+$ such that for all $n\ge N_+$, the quotients $Q_{n}$ are larger than $B$. 
Without loss of generality, we can henceforth assume that the initial values $a$ and $b$ are such that $Q_2\ge B$.

We inductively define sub-intervals of $\RR_+$ indexed by finite sequences of $R$'s and $L$'s:
$$
I_R\egdef f_R([B, \infty])=\left[\lambda, \lambda+\frac{1}{B} \right] \quad \mbox{ and }\quad I_L\egdef f_L([B, \infty]) = [B, \lambda], 
$$
and for any finite sequence $X$ in $\{R, L\}^*$, 
$$
I_{XR}\egdef f_R(I_X)\quad \mbox{ and }\quad I_{XL}\egdef f_L(I_X).
$$
Obviously, all these intervals are included in $\left[B,\lambda+\frac{1}{B}\right]$. 

\begin{lemma}\label{lem:I_W}
Let $W$ and $W'$ be two finite words in $\{R, L\}^*$. 
\begin{itemize}
\item If $W$ is a suffix of $W'$, then $I_{W'}\subset I_W$;
\item If neither $W$ is a suffix of $W'$ nor $W'$ is a suffix of $W$, then $I_W$ and $I_{W'}$ have disjoint interiors.
\end{itemize}
\end{lemma}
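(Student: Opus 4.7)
The plan is to view each $I_W$ as the image of $[B,\infty]$ under a composition of copies of $f_R$ and $f_L$. Iterating the recursion $I_{Xx}=f_x(I_X)$, if $W=w_1\cdots w_m$ then $I_W = g_W([B,\infty])$ with $g_W\egdef f_{w_m}\circ\cdots\circ f_{w_1}$. Note that appending a letter on the \emph{right} of $W$ corresponds to composing on the \emph{outside}, so common suffixes of two words factor out as the outermost part of $g_W$. Two elementary observations will then do all the work: (a) on $(0,\infty)$, $f_R$ is strictly decreasing and $f_L$ is strictly increasing, hence any composition $g_W$ is strictly monotonic and in particular continuous and injective on $[B,\lambda+1/B]$; (b) since $B$ satisfies $\lambda-1/B=B$, one computes $f_R([B,\infty])=[\lambda,\lambda+1/B]$ and $f_L([B,\infty])=[B,\lambda]$, so these two images have disjoint interiors (meeting only at $\lambda$), and both lie inside $[B,\lambda+1/B]\subset[B,\infty]$.

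For the first bullet, write $W'=UW$. Iterating the recursion gives $I_{W'}=g_W(I_U)$. By (b) we have $I_U\subset[B,\lambda+1/B]\subset[B,\infty]$, hence $I_{W'}=g_W(I_U)\subset g_W([B,\infty])=I_W$.

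For the second bullet, let $V$ be the longest common suffix of $W$ and $W'$. The hypothesis forces $V$ to be strictly shorter than either word, so we may write $W=\widehat W V$ and $W'=\widehat W' V$ with $\widehat W,\widehat W'$ both nonempty; by maximality of $V$ their last letters must differ. Without loss of generality, $\widehat W$ ends in $R$ and $\widehat W'$ ends in $L$. Then $I_{\widehat W}\subset f_R([B,\infty])=[\lambda,\lambda+1/B]$ and $I_{\widehat W'}\subset f_L([B,\infty])=[B,\lambda]$, which have disjoint interiors by (b). Applying the outermost factor $g_V$ yields $I_W=g_V(I_{\widehat W})$ and $I_{W'}=g_V(I_{\widehat W'})$; since $g_V$ is strictly monotonic on $[B,\lambda+1/B]$ by (a), it preserves disjointness of interiors, which finishes the proof. (Degenerate cases where $V=\emptyset$ or $\widehat W,\widehat W'$ are single letters are handled by the natural convention $I_\emptyset=[B,\infty]$ already implicit in the definitions $I_R=f_R([B,\infty])$ and $I_L=f_L([B,\infty])$.)

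The argument is essentially bookkeeping, and the only subtlety is keeping the suffix/prefix directions straight: because $I_{Xx}=f_x(I_X)$ composes $f_x$ on the outside, common suffixes of two words are precisely what factor out as a common outer map $g_V$, reducing the whole claim to the elementary disjointness $f_R([B,\infty])\cap f_L([B,\infty])=\{\lambda\}$ together with injectivity of $g_V$.
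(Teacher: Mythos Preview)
Your proof is correct and follows essentially the same route as the paper: both arguments reduce the second bullet to the disjointness of $I_R$ and $I_L$ via the longest common suffix, and then transport this disjointness through the suffix using the monotonicity of $f_R$ and $f_L$ on $[B,\infty]$. The only cosmetic difference is that you package the induction as a single strictly monotonic map $g_V=f_{v_s}\circ\cdots\circ f_{v_1}$, whereas the paper phrases the same step as an induction on the length of the common suffix.
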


\begin{proof}
The first assertion is an easy consequence of the definition of $I_W$. 
To prove the second one, consider the largest common suffix $S$ of $W$ and $W'$. 
Since $LS$ and $RS$ are suffix of $W$ and $W'$, by the first assertion, it is enough to prove that $I_{LS}$ and $I_{RS}$ have disjoint interiors. This can be shown by induction on the length of $S$, using the fact that $f_R$ and $f_L$ are monotonic on $[B, \infty]$.
\end{proof}

\begin{lemma}\label{lem:singleton}
Let $(W_i)_{i\ge1}$ be a sequence of $R$'s and $L$'s. 
Then $\bigcap_{n\ge1} I_{W_n \dots W_1}$ is reduced to a single point.
\end{lemma}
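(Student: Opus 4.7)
My plan is to show that the diameters $|I_{W_n \dots W_1}|$ tend to $0$. Because $W_{n-1}\dots W_1$ is a suffix of $W_nW_{n-1}\dots W_1$, the first assertion of Lemma~\ref{lem:I_W} gives $I_{W_n\dots W_1} \subset I_{W_{n-1}\dots W_1}$, so the family is a decreasing sequence of nonempty closed intervals whose intersection is therefore a closed interval, reducing to a single point precisely when the diameters go to $0$.

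Set $g_n \egdef f_{W_1} \circ \cdots \circ f_{W_n}$, so that the recursive definition of $I_{\cdot}$ gives $I_{W_n\dots W_1} = g_n([B,\infty]) = g_{n-1}(I_{W_n})$. Both $I_R$ and $I_L$ have length $1/B$ (using $\lambda - B = 1/B$), and one checks directly that $f_R$ and $f_L$ both map $[B, \lambda + 1/B]$ into itself. A short induction then shows that every intermediate value occurring when evaluating $g_{n-1}'$ via the chain rule on $I_{W_n}$ lies in $[B, \lambda + 1/B]$. Since $|f_W'(q)| = 1/q^2 \le 1/B^2$ there, the chain rule yields $|g_{n-1}'(q)| \le (1/B^2)^{n-1}$ on $I_{W_n}$, and hence by the mean value theorem $|I_{W_n\dots W_1}| \le (1/B)^{2n-1}$. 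For $\lambda > 2$ one has $B > 1$, so the diameters decay geometrically and the lemma follows.

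The main obstacle is the case $\lambda = 2$, where $B = 1$ renders the bound trivial; this reflects the fact that $1$ is a parabolic fixed point of $f_L$. To refine, I note that whenever $W_{j} = R$ with $j \ge 2$, the corresponding intermediate value in the chain rule lies in $I_R = [2, 3]$, so $|f_{W_{j-1}}'|$ at that point is at most $1/\lambda^2 = 1/4$. This gives $|I_{W_n\dots W_1}| \le (1/4)^{M_n}$, where $M_n$ counts the $R$'s among $W_2, \dots, W_n$. If the sequence $(W_i)_{i\ge 1}$ contains infinitely many $R$'s, then $M_n \to \infty$ and we are done. Otherwise there exists $N$ with $W_i = L$ for all $i \ge N$; then for $n \ge N$, $g_n = h \circ f_L^{n - N + 1}$ with $h \egdef f_{W_1} \circ \cdots \circ f_{W_{N-1}}$ a fixed map. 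A direct induction gives $f_L^m([1, \infty]) = [1, 1 + 1/m]$, and since $h$ is smooth (hence Lipschitz) on the compact interval $[1, 2]$, we conclude $|I_{W_n \dots W_1}| \to 0$ as $n \to \infty$.
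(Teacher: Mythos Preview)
Your proof is correct and follows essentially the same strategy as the paper: both reduce to showing the lengths $|I_{W_n\dots W_1}|$ tend to zero, handle $\lambda>2$ via the uniform contraction bound $|f_W'|\le 1/B^2<1$, and for $\lambda=2$ combine the explicit computation $f_L^m([1,\infty])=[1,1+1/m]$ with contraction coming from each occurrence of $R$.

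The only organizational difference in the $\lambda=2$ case is that the paper argues uniformly over all words of length~$n$ (bounding by $1/j$ where $j$ is the maximal run of consecutive $L$'s, and using that $f_L\circ f_R$ and $f_R\circ f_R$ contract by $1/9$), thereby obtaining the stronger statement $\sup_{W_n\dots W_1}|I_{W_n\dots W_1}|\to 0$; you instead split the fixed sequence into the two cases ``infinitely many $R$'s'' versus ``eventually all $L$'s''. Your version is slightly more direct for the lemma as stated, while the paper's uniform bound is a bit stronger but not needed here.
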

\begin{proof}
By Lemma~\ref{lem:I_W}, $I_{W_{n+1} W_n \dots W_1}\subset I_{W_n \dots W_1}$. 
Since the intervals are compact and nonempty, their intersection is nonempty. 
It remains to prove that their length goes to zero.
First consider the case $\lambda >2$. The derivatives of $f_L$ and $f_R$ are of modulus less than $1/B^2<1$. 
Therefore, the length of $I_{W_n \dots W_1}$ is less than a constant times $(1/B^2)^{n}$.
Let us turn to the case $\lambda=2$. 
Observe that $I_{L^j}=\left[1, \frac{j+1}{j}\right]$, which is of length $1/j$. 
Hence, if $W_n \dots W_1$ contains $j$ consecutive $L$'s, then $I_{W_n \dots W_1}$ is included, for some $r<n$, in 
$I_{L^jW_r\dots W_1}=f_{W_1}\circ\dots\circ f_{W_r}(I_{L^j})$ which is of length less than $1/j$ 
(recall that the derivatives of $f_L$ and $f_R$ are of modulus less than $1$).
On the other hand, the derivatives of $f_L\circ f_R$ and $f_R\circ f_R$ are of modulus less than $1/(2B+1)^2=1/9$ on $[B, \infty]$.
Therefore, considering the maximum number of consecutive $L$'s in $W_n \dots W_1$, we obtain 
$\sup_{W_n \dots W_1}|I_{W_n \dots W_1}|\tend{n}{\infty} 0$.
\end{proof}

\begin{figure}[h]
	\begin{center}
	\input{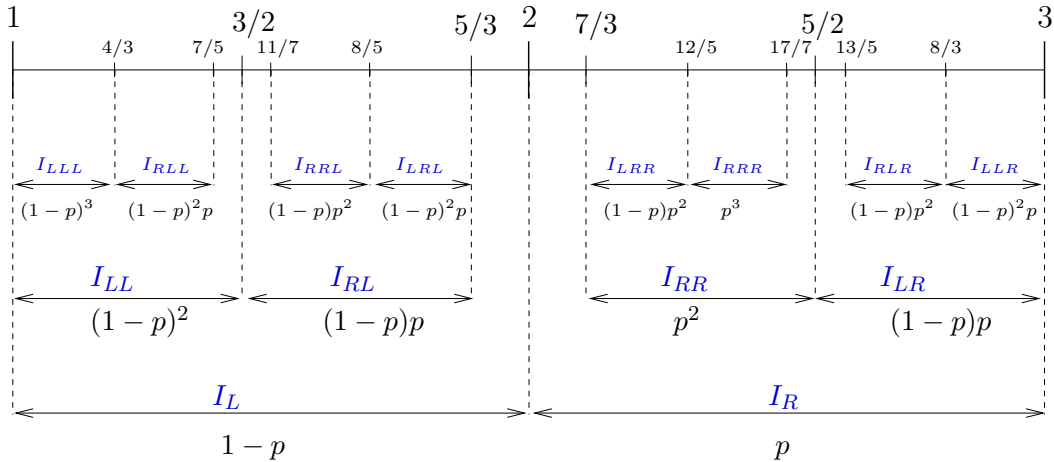}
	\end{center}
\caption{First stages of the construction of the measure $\mu_{p, 2}$.}
\label{mesure2}
\end{figure}

We deduce from the preceding results the invariant measure of the Markov chain $(Q_n)$.

\begin{corollary}
The unique invariant probability measure $\mu_{p, \lambda}$ of the Markov chain $(Q_n)=(F_n/F_{n-1})$ is given by 
\begin{equation}\label{nu}
 \mu_{p, \lambda}\left( I_{W}\right):=p^{|W|_R}(1-p)^{|W|_{L}}
\end{equation}
for any finite word $W$ in $\{R, L\}^*$, where $|W|_R$ and $|W|_L$ respectively denote the number of $R$'s and $L$'s in $W$.
\end{corollary}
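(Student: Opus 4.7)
The plan is to verify three points: the formula $\mu_{p,\lambda}(I_W):=p^{|W|_R}(1-p)^{|W|_L}$ consistently defines a Borel probability measure on $\left[B,\lambda+\tfrac{1}{B}\right]$, this measure is invariant for the kernel of $(Q_n)$, and uniqueness. For the construction, Lemma~\ref{lem:I_W} says that for each $n$ the cylinders $\{I_W:|W|=n\}$ partition $\left[B,\lambda+\tfrac{1}{B}\right]$ modulo their finitely many common endpoints, while the elementary identity $p+(1-p)=1$ makes the prescription consistent under refinement, with total mass $1$ at every level. Lemma~\ref{lem:singleton} ensures that the maximal diameter of an $I_W$ with $|W|=n$ tends to $0$, so the algebra generated by the cylinders generates the Borel $\sigma$-algebra on $\left[B,\lambda+\tfrac{1}{B}\right]$. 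Carath\'eodory's extension theorem then yields a unique Borel probability measure $\mu_{p,\lambda}$ taking the announced values.

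Since $\left[B,\lambda+\tfrac{1}{B}\right]$ is absorbing for the chain, invariance of a probability $\nu$ on this set amounts to the identity $\nu(A)=p\,\nu\bigl(f_R^{-1}(A)\bigr)+(1-p)\,\nu\bigl(f_L^{-1}(A)\bigr)$ for every Borel set $A$; it suffices to verify this on the $\pi$-system of cylinders. Because $f_R$ and $f_L$ are monotone bijections from $\left[B,\lambda+\tfrac{1}{B}\right]$ onto $I_R$ and $I_L$ respectively and these two images meet only at $\lambda$, one has $f_R^{-1}(I_{WR})=I_W$ and $f_R^{-1}(I_{WL})=\emptyset$, with the symmetric identities for $f_L$. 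The invariance relation for $A=I_{WR}$ then boils down to $\mu_{p,\lambda}(I_{WR})=p\,\mu_{p,\lambda}(I_W)$, which holds by definition; the case $A=I_{WL}$ is identical.

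Conversely, any invariant probability $\nu$ is automatically supported on $\left[B,\lambda+\tfrac{1}{B}\right]$ (reached in finite time almost surely from any positive start, as already observed in the text), and applying the kernel identity inductively on $|W|$ forces $\nu(I_W)=p^{|W|_R}(1-p)^{|W|_L}$; hence $\nu$ agrees with $\mu_{p,\lambda}$ on a generating $\pi$-system, and so $\nu=\mu_{p,\lambda}$. The only genuine subtlety---and the step that deserves a moment of care---is the endpoint issue: the intervals $I_W$ are disjoint only up to their common endpoints, and one must check that each such endpoint is $\mu_{p,\lambda}$-null, so that the additivity used in the construction is strict. This is automatic, since every such endpoint is contained in arbitrarily small cylinders of mass $p^a(1-p)^b$ with $a+b\to\infty$, which tends to $0$ whenever $0<p<1$; the degenerate case $p=1$, in which $\mu_{1,\lambda}$ collapses to the Dirac mass at the fixed point of $f_R$, is handled directly.
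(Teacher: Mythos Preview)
There is a genuine error in your construction: Lemma~\ref{lem:I_W} does \emph{not} say that the cylinders $\{I_W:|W|=n\}$ partition $\left[B,\lambda+\tfrac{1}{B}\right]$; it only gives disjointness of interiors and nesting under suffix extension. In fact the partition claim is false for every $n\ge 2$. The level-$2$ intervals with last letter $R$ satisfy
\[
I_{RR}\cup I_{LR}=f_R(I_R)\cup f_R(I_L)=f_R\bigl([B,\lambda+\tfrac1B]\bigr)\subsetneq f_R\bigl([B,\infty]\bigr)=I_R,
\]
since $f_R$ is injective and $[B,\lambda+\tfrac1B]\subsetneq[B,\infty]$. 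Concretely, for $\lambda=2$ the four level-$2$ intervals are $[1,\tfrac32]$, $[\tfrac32,\tfrac53]$, $[\tfrac73,\tfrac52]$, $[\tfrac52,3]$, leaving the gap $(\tfrac53,\tfrac73)$ inside $[1,3]$ (compare Figure~\ref{mesure2}). This breaks your Carath\'eodory argument---the prescription is not additive on an algebra of sets covering the interval---and also your claim that the cylinders generate the Borel $\sigma$-algebra of $\left[B,\lambda+\tfrac1B\right]$.

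The remedy is short and is essentially what the paper intends by ``we deduce from the preceding results'': define $\mu_{p,\lambda}$ as the pushforward of the Bernoulli$(p)$ product measure on $\{R,L\}^{\NN}$ under the coding map $(W_i)_{i\ge1}\mapsto \bigcap_n I_{W_n\cdots W_1}$, which is well defined by Lemma~\ref{lem:singleton}; formula~\eqref{nu} then follows from Lemma~\ref{lem:I_W} together with your endpoint-nullity remark. Your invariance computation on cylinders is correct as written. For uniqueness you must add one observation: any invariant probability carried by $[B,\lambda+\tfrac1B]$ is in fact supported on the attractor $K:=\bigcap_n\bigcup_{|W|=n}I_W$, because one step of the chain sends $[B,\lambda+\tfrac1B]$ into $f_R([B,\lambda+\tfrac1B])\cup f_L([B,\lambda+\tfrac1B])=\bigcup_{|W|=2}I_W$, and iteration gives the claim. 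On $K$ the sets $I_W\cap K$ do form refining partitions with diameters tending to zero, so your induction determines $\nu$ uniquely there.
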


We can now conclude the proof of Theorem~\ref{th:case_2} by invoking a classical theorem about law of large numbers for Markov chain (see \textit{e.g.}~\cite{meyn1993}, Theorem~17.0.1). 

\medskip

Note that the explicit form of the invariant measure when $p=1/2$ and $\lambda\ge 2$ was already given by Sire and Krapivsky~\cite{krapivsky2001}.

\section{Variations of the Lyapunov exponents}

\subsection{Variations with $p$}
\label{Sec:croissance_p}

\begin{theo}
\label{croissance}
For any integer $k\ge3$,
the function $p\mapsto\widetilde\gamma_{p,\lambda_k}$ is increasing and analytic on $]1/k,1[$, and 
the function $p\mapsto\gamma_{p,\lambda_k}$ is increasing and analytic on $]0,1[$.
Moreover,
\begin{equation}
 \label{limit}
\lim_{p\to 0}\gamma_{p,\lambda_k} = \lim_{p\to 1/k}\widetilde\gamma_{p,\lambda_k} = 0,
\end{equation}
and
\begin{equation}
 \label{limit1}
\lim_{p\to 1}\gamma_{p,\lambda_k} = \gamma_{1,\lambda_k} = \lim_{p\to 1}\widetilde\gamma_{p,\lambda_k} = \widetilde\gamma_{1,\lambda_k} = \log \left( \dfrac{\lambda_k+\sqrt{\lambda_k^2+4}}{2} \right).
\end{equation}
For any $\lambda\ge2$, the function $p\mapsto\gamma_{p,\lambda}$ is increasing and analytic on $]0,1[$.
\end{theo}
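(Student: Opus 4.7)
My plan is to reduce to a study of $\gamma$ as a function of the single parameter $\rho$, then transfer the conclusions back to $p$. First I will show that $p \mapsto \rho(p)$ is real-analytic and strictly decreasing on the relevant intervals. Analyticity follows from the implicit function theorem applied to the defining equations for $p_R$ ($g(x,p)=0$ in the linear case, $\widetilde g(x,p)=0$ in the non-linear case); strict convexity in $x$, already used in Sections~\ref{reductionSection} and~\ref{Section:non-linear}, guarantees a non-zero $\partial_x$ at $x=p_R$. Strict monotonicity of $\rho$ in $p$ comes from implicit differentiation of the same equations: $p_R$ runs from $0$ to~$1$ as $p$ sweeps its admissible interval, so $\rho=\sqrt[k-1]{1-p_R}$ runs from $1$ down to~$0$.

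Next I will establish analyticity of $\rho \mapsto \gamma(\rho) = \int_0^\infty \log x\, d\nu_{k,\rho}(x)$ on $]0,1[$. The natural tool is the transfer operator
\begin{equation*}
T_\rho f(q) \;=\; \sum_{j=0}^{k-2} \frac{\rho^j}{Z(\rho)}\, f\bigl(f_j(q)\bigr),
\end{equation*}
which is an analytic family in $\rho$. For bounded continuous $f$, each iterate $T_\rho^\ell f(q)$ is rational in $\rho$, and the uniform convergence $T_\rho^\ell f(q)\to\int f\,d\nu_{k,\rho}$ on compact subsets of $]0,1[$ follows from Lemma~\ref{generation} and the geometric contraction of the Stern-Brocot subdivisions $\I(\ell)$. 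The unbounded integrand $\log$ is then handled by cutoff together with a uniform tail bound $\nu_{k,\rho}([M,\infty))=O(\rho^{\alpha M})$ (and symmetrically near $0$), both readable off the weights $\rho^{|\vec j|}/Z(\rho)^\ell$. Composing with the analytic function $p\mapsto\rho(p)$ yields analyticity of $\gamma_{p,\lambda_k}$ in $p$.

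The main obstacle is strict monotonicity of $\rho\mapsto\gamma(\rho)$ on $]0,1[$. The key is a block-level identity obtained by writing $\log f_j(q)=\phi_{j+1}(q)-\phi_j(q)$ with $\phi_j(q)=\log(s_j+s_{j+1}q)$ and $s_j=\sin(j\pi/k)/\sin(\pi/k)$, then telescoping against the invariance of $\nu_{k,\rho}$; since $\phi_0=\log$ and $\phi_{k-1}\equiv 0$, this collapses to
\begin{equation*}
\bigl(Z(\rho)+1\bigr)\,\gamma(\rho) \;=\; (1-\rho)\sum_{j=1}^{k-2}\rho^{j-1}\,\nu_{k,\rho}(\phi_j).
\end{equation*}
I will differentiate this identity in $\rho$, the most delicate point being the control of $\frac{d}{d\rho}\nu_{k,\rho}(\phi_j)$; the spectral perturbation of $T_\rho$ used above yields the existence of the derivatives, and the sign is extracted by applying the asymmetry inequality $\Delta_t>0$ of Lemma~\ref{lemme:nu_rho} to the signed measure $\partial_\rho\nu_{k,\rho}$, exploiting again that $\phi_j(q)+\phi_{k-1-j}(1/q)$ is constant in $q$. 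Composing with $d\rho/dp<0$ gives strict growth of $\gamma_{p,\lambda_k}$ and $\widetilde\gamma_{p,\lambda_k}$ in~$p$.

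For the limits, Remark~\ref{rho = 1} gives $\int\log x\,d\nu_{k,1}(x)=0$, so continuity of $\gamma$ at $\rho=1$ yields $\lim_{p\to 0}\gamma_{p,\lambda_k}=\lim_{p\to 1/k}\widetilde\gamma_{p,\lambda_k}=0$. As $p\to 1$, $\rho\to 0$ and every block degenerates to $R$: the block Markov chain becomes the deterministic recursion $Q_{\ell+1}=f_0(Q_\ell)=\lambda+1/Q_\ell$, which converges to the attractive fixed point $q^*=(\lambda_k+\sqrt{\lambda_k^2+4})/2$, so $\nu_{k,0}=\delta_{q^*}$ and $\gamma=\log q^*$. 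Finally, the case $\lambda\ge2$ is handled in the same spirit using the explicit measure $\mu_{p,\lambda}(I_W)=p^{|W|_R}(1-p)^{|W|_L}$ of Section~\ref{Sec:lambda2}: analyticity is immediate since $\log q$ is bounded on the compact support $[B,\lambda+1/B]$ and $\int\log q\,d\mu_{p,\lambda}$ is a uniformly convergent series of polynomials in~$p$; monotonicity follows by pairing the intervals $I_{RW}$ and $I_{LW}$, whose contributions can be compared directly because $f_R$ and $f_L$ both map $[B,\lambda+1/B]$ into itself with $f_R(q)>f_L(q)>B$.
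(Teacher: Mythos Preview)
Your route differs substantially from the paper's. The paper never analyzes $\rho\mapsto\Gamma(\rho)=\int\log x\,d\nu_{k,\rho}$ directly. For monotonicity it proves a pathwise coupling (Proposition~\ref{compare}): flipping a single $L$ to an $R$ in the driving sequence can only increase every term $\widetilde F_n$ of the non-linear sequence. This immediately gives $\widetilde\gamma_{p,\lambda_k}\le\widetilde\gamma_{p',\lambda_k}$ for $p\le p'$; since both $\gamma_{p,\lambda_k}$ and $\widetilde\gamma_{p,\lambda_k}$ equal the \emph{same} function $\Gamma(\rho)$, and in each model $p\mapsto\rho$ is non-increasing and onto $]0,1[$, the linear case follows. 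For analyticity the paper does not touch the transfer operator: $\gamma_{p,\lambda_k}$ is the top Lyapunov exponent of an i.i.d.\ product of $\mathrm{SL}_2$-type matrices, and positivity makes it simple, so Peres's theorem gives analyticity in $p$ in one line; analyticity of $\widetilde\gamma_{p,\lambda_k}$ then comes by composing $\Gamma$ with the analytic map $p\mapsto\rho$. ``Non-decreasing and analytic'' upgrades to ``strictly increasing'' for free. The case $\lambda\ge2$ is handled by the same coupling.

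Your telescoping identity $(Z(\rho)+1)\gamma(\rho)=(1-\rho)\sum_{j=1}^{k-2}\rho^{j-1}\nu_{k,\rho}(\phi_j)$ is correct, but the monotonicity step built on it has a real gap. Differentiating brings in $\partial_\rho\nu_{k,\rho}(\phi_j)$, and you give no mechanism for controlling its sign: Lemma~\ref{lemme:nu_rho} is an inequality for $\nu_{k,\rho}$ itself, not for the signed measure $\partial_\rho\nu_{k,\rho}$, and there is no reason the asymmetry $\Delta_t\ge0$ should persist after differentiation. Moreover the symmetry you invoke, ``$\phi_j(q)+\phi_{k-1-j}(1/q)$ is constant in $q$'', is false: using $s_{k-m}=s_m$ one gets $\phi_{k-1-j}(1/q)=\log\bigl((s_{j+1}q+s_j)/q\bigr)=\phi_j(q)-\log q$, so in fact $\phi_j(q)-\phi_{k-1-j}(1/q)=\log q$, which does not produce the cancellation you want. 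Without a working sign argument, strict monotonicity is unproved; the paper's coupling avoids this difficulty entirely and, combined with the Peres citation, is both shorter and more robust than the transfer-operator program you outline.
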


The proof of the theorem relies on the following proposition, whose proof is postponed to the end of the section.

\begin{prop}
\label{compare}
Let $(X_i)$ be a sequence of letters in the alphabet $\{R, L\}$ and $(X'_i)$ be a sequence of letters in the alphabet $\{R, L\}$ obtained from $(X_i)$ by turning an $L$ into an $R$. 
If $\lambda=\lambda_k$ for some $k\ge3$, then, in the non-linear case, any label $\widetilde F_n$ coded by the sequence $(X_i)$ is smaller than the corresponding label $\widetilde F'_n$ coded by $(X'_i)$.
If $\lambda\ge 2$, and if $F_2/F_1\ge 1$, any label $F_n$ coded by the sequence $(X_i)$ is smaller than the corresponding label $F'_n$ coded by $(X'_i)$.
\end{prop}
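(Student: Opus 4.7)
The two sequences $(X_i)$ and $(X'_i)$ differ at a single position $n_0$, with $X_{n_0}=L$ and $X'_{n_0}=R$; they agree for $i<n_0$, so $F_i=F'_i$ and $\widetilde F_i=\widetilde F'_i$ there. At step $n_0$ itself, $F'_{n_0}=\lambda F_{n_0-1}+F_{n_0-2}$ versus $F_{n_0}=\lambda F_{n_0-1}-F_{n_0-2}$ (linear), and $\widetilde F'_{n_0}=\lambda\widetilde F_{n_0-1}+\widetilde F_{n_0-2}$ versus $\widetilde F_{n_0}=|\lambda\widetilde F_{n_0-1}-\widetilde F_{n_0-2}|$ (non-linear), so in both cases the inequality $F'_{n_0}\ge F_{n_0}$ (resp.\ $\widetilde F'_{n_0}\ge\widetilde F_{n_0}$) holds by direct inspection. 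The task is to propagate this to all subsequent $n$.

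In the linear case with $\lambda\ge 2$, my plan is to exploit the key observation that $\delta_n\egdef F'_n-F_n$ satisfies exactly the same linear recursion as $(F_n)$ for $n>n_0$ (since the two sequences agree on every sign except at $n_0$), with initial conditions $\delta_{n_0-1}=0$ and $\delta_{n_0}=2F_{n_0-2}>0$. The hypothesis $F_2/F_1\ge 1$ combined with $\lambda\ge 2$ implies by a simple induction that the ratio $F_n/F_{n-1}$ stays $\ge 1$ throughout (for the $L$ step, $F_{n+1}=\lambda F_n-F_{n-1}\ge(\lambda-1)F_n\ge F_n$); in particular, every $F_n$ stays positive. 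The same argument applied to $(\delta_n)$, which starts from the ratio $\delta_{n_0+1}/\delta_{n_0}=\lambda\ge 2$ after one more step, yields $\delta_n\ge\delta_{n-1}>0$ for all $n>n_0$, closing the linear case.

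For the non-linear case with $\lambda=\lambda_k$, I would induct on $n\ge n_0$ with the joint property $\widetilde F_{n-1}\le\widetilde F'_{n-1}$ and $\widetilde F_n\le\widetilde F'_n$. When $X_{n+1}=R$, the inequality is preserved by direct linearity. When $X_{n+1}=L$, I would split into four subcases according to the signs of $\lambda\widetilde F_n-\widetilde F_{n-1}$ and $\lambda\widetilde F'_n-\widetilde F'_{n-1}$, i.e.\ whether each of the two sequences effectively applies the matrix $L$ or $L'$ at this step. Three of the four subcases can be handled by elementary arithmetic, using only $\lambda_k\ge 1$ (valid for $k\ge 3$): for instance, the configuration where the $L'$-branch is used in the unprimed sequence but the $L$-branch in the primed one yields $\widetilde F'_{n+1}-\widetilde F_{n+1}\ge 2(\lambda^2-1)\widetilde F_{n-1}\ge 0$ after substituting the explicit formulae.

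The main obstacle is the generic subcase in which neither absolute value flips (both effectively use $L$), so that $\widetilde F'_{n+1}-\widetilde F_{n+1}=\lambda(\widetilde F'_n-\widetilde F_n)-(\widetilde F'_{n-1}-\widetilde F_{n-1})$, a quantity not a priori non-negative: the na\"ive componentwise induction fails here. To overcome this I expect to need to strengthen the induction hypothesis so as to confine the difference vector $(\widetilde F'_{n-1}-\widetilde F_{n-1},\widetilde F'_n-\widetilde F_n)$ to a cone in $\RR_+^2$ that is invariant under all relevant matrix actions, including the mixed situations where one sequence uses $L$ while the other uses $L'$. Verifying invariance of such a cone should ultimately rely on the structural identities specific to $\lambda=2\cos(\pi/k)$, notably $RL^{k-1}=\mathrm{diag}(1,-1)$ and $RL^{k-2}L'=\Id$, which already underlie the rest of the paper.
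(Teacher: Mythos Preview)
Your treatment of the linear case $\lambda\ge 2$ is correct and essentially equivalent to the paper's: the paper observes directly that $R X_{j+1}\cdots X_n$ decomposes into blocks $RL^s$, each with nonnegative entries when $\lambda\ge2$, so that $F'_n - F_n = 2C_1 F_{j-2}\ge 0$ where $(C_1,C_2)^T$ is the second column of that product. Your reformulation via the difference sequence $\delta_n$ satisfying the same recursion is an equally short route to the same conclusion.

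The non-linear case, however, contains a genuine gap. You correctly isolate the obstruction (the subcase where both sequences apply the genuine $L$ matrix, giving $\delta_{n+1}=\lambda\delta_n-\delta_{n-1}$), but the proposed remedy---an invariant cone for the difference vector---cannot work as stated. In the mixed subcases the evolution of $(\delta_{n-1},\delta_n)$ is \emph{not} given by any matrix action: when the unprimed sequence uses $L'$ and the primed one uses $L$, one finds $\delta_{n+1}=\lambda(\widetilde F'_n+\widetilde F_n)-(\widetilde F'_{n-1}+\widetilde F_{n-1})$, a function of the \emph{sums} rather than the differences. Your claimed bound $\delta_{n+1}\ge 2(\lambda^2-1)\widetilde F_{n-1}$ in this subcase does not follow from the induction hypothesis and the sign constraints alone; nor is the ``both $L'$'' subcase $\delta_{n+1}=\delta_{n-1}-\lambda\delta_n$ automatically nonnegative. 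So the claim that three of the four subcases are elementary is over-optimistic, and the fourth is left entirely open.

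The paper avoids step-by-step induction on $n$ altogether. It writes the common suffix after the flip as $L^rY$ and treats separately (i) $Y=\emptyset$ and (ii) $Y$ reduced (no pattern $RL^{k-1}$), the general case following via the reduction process. Case~(i) is handled by three structural lemmas: replacing an interior $L$ by $R$ inside a block $RL^{j_1+1+j_2}$ increases the output (Lemma~\ref{compareRL}, proved via the Stern--Brocot intervals $I_j$); along any reduced sequence beginning with $R$ one has $x_{i+k}\ge x_i$ (Lemma~\ref{comparek}); and along the pure non-linear $L$-recursion $\widetilde F_{n+k}\le\widetilde F_n$ (Lemma~\ref{reverse}). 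In case~(ii), once two consecutive labels are ordered the nonnegativity of the block matrices $RL^j$ for $0\le j\le k-2$ propagates the inequality through $Y$. The identities you cite ($RL^{k-1}=\mathrm{diag}(1,-1)$, $L^k=-\Id$) are indeed decisive, but they enter through this block/period-$k$ analysis rather than through any cone invariance for the difference vector.
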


\begin{proof}[Proof of Theorem~\ref{croissance}] 
Let $\lambda=\lambda_k$ for some integer $k\ge3$.
Let $1/k<p\le p'\le1$. Let $(X_i)$ (respectively $(X'_i)$) be a sequence of i.i.d. random variables taking values in the alphabet $\{R, L\}$ with probability $(p, 1-p)$ (respectively $(p', 1-p')$).
We can realize a coupling of $(X_i)$ and $(X'_i)$ such that for any $i$, $X_i=R$ implies $X'_i=R$. 
From Proposition~\ref{compare}, it follows that the label $\widetilde F_n$ coded by $(X_i)$ is always smaller than the label $\widetilde F'_n$ coded by $(X'_i)$. 
We get that
$$
\widetilde\gamma_{p,\lambda_k} = \lim \dfrac{1}{n}\log \widetilde F_n
\le \lim\dfrac{1}{n}\log \widetilde F'_n = \widetilde\gamma_{p',\lambda_k}.
$$
Therefore, $p\mapsto \widetilde \gamma_{p,\lambda_k}$ is a non-decreasing function on $[1/k,1]$.

Observe that $p\mapsto p_R$ is non-decreasing in both (linear and non-linear) cases. 
Hence, the function $\rho : p\mapsto \sqrt[k-1]{1-p_R}$ is non-increasing in both cases.
We conclude that $p\mapsto\gamma_{p,\lambda_k}$ is non-decreasing on $[0,1]$. 

Since $\gamma_{p,\lambda_k}>0$ for $0<p<1$, the upper Lyapunov exponent associated to the product of random matrices is simple, and we know from \cite{peres1990} that $\gamma_{p,\lambda_k}$ is an analytic function of $p\in]0, 1[$, thus it is increasing. Via the dependence on $\rho$ which is an analytic function of $p$, we get that $\widetilde\gamma_{p,\lambda_k}$ is an analytic increasing function of $p\in]1/k, 1[$.

Now, observe that $\rho\longmapsto \int_0^\infty \log x d\nu_{k,\rho}(x)$ is continuous on $[0,1]$ (as the uniform limit of continuous functions).
When $p$ goes to zero in the linear case (or $p\to 1/k$ in the non-linear case), $p_R$ tends to 0 and $\rho$ tends to 1. By continuity of the integral, we obtain~\eqref{limit} using Remark~\ref{rho = 1}. 
When $p=1$, the deterministic sequence $F_n=\widetilde F_n$ grows exponentially fast, and the expression of $\gamma_{1,\lambda_k}$ follows from elementary analysis.

When $\lambda \ge 2$ (we do not need to distinguish the linear case from the non-linear cases), the proof is handled in the same way, using Proposition~\ref{compare}.
\end{proof}

\begin{proof}[Proof of Proposition~\ref{compare} when $\lambda\ge2$]
We let the reader check that in this case, for all $s\ge0$ the matrix $RL^s$ has nonnegative entries. 
Suppose the difference between $(X_i)$ and $(X'_i)$ occurs at level $j$. For any $n\ge j$, the sequence $X_j\ldots X_n$ can be decomposed into blocks of the form $RL^s$, $s\ge0$, hence the product of matrices $X_j\cdots X_n$ has nonnegative entries. 
If $n\ge j$, we can thus write $F'_n$ as a linear combination with nonnegative coefficients: $ F'_n = C_1 F'_{j-2}+ C_2 F'_{j-1}$.
Moreover, $F_n = -C_1 F_{j-2}+ C_2 F_{j-1}=-C_1 F'_{j-2}+ C_2 F'_{j-1}$, hence $F_n\le F'_n$ (since $F_2/F_1\ge 1$, all $F_n$'s are positive).
\end{proof}

The proof of Proposition ~\ref{compare} when $\lambda=\lambda_k$ uses three lemmas. The first one can be viewed as a particular case when the sequence of $R$'s and $L$'s is reduced.

\begin{lemma}
\label{compareRL}
Let $\lambda=\lambda_k$.
Let $a>0$, $b>0$, $j_1\ge0$ and $j_2\ge0$ such that $j_1+1+j_2\le k-2$.
If $(a',b')=(a,b)RL^{j_1}RL^{j_2}$ and $(a'',b'')=(a,b)RL^{j_1+1+j_2}$, then $b'\ge b''$.
\end{lemma}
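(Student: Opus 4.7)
The idea is to express the difference $b'-b''$ as the second coordinate of $(a,b)\cdot M$, where $M$ is the matrix
$$M\egdef RL^{j_1}RL^{j_2}-RL^{j_1+1+j_2},$$
and then to show that this second coordinate is nonnegative by a direct computation using the explicit sine formulas~\eqref{PowersOfL} and~\eqref{RtimesPowersOfL}.

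The key algebraic observation is that $M$ factors very cleanly: since $RL^{j_1+1+j_2}=RL^{j_1}\,L\,L^{j_2}$ while $RL^{j_1}RL^{j_2}=RL^{j_1}\,R\,L^{j_2}$, we get
$$M=RL^{j_1}(R-L)L^{j_2}.$$
A direct inspection of the matrices $R$ and $L$ defined in~\eqref{matrices} shows that $R-L=\begin{pmatrix}0&2\\0&0\end{pmatrix}$, so $R-L$ kills the first row of $L^{j_2}$ and only picks up its second row. Using formula~\eqref{PowersOfL} for $L^{j_2}$ and formula~\eqref{RtimesPowersOfL} for $RL^{j_1}$, a short matrix multiplication gives
$$M=\frac{2}{\sin^2(\pi/k)}\begin{pmatrix}\sin\frac{j_1\pi}{k}\sin\frac{j_2\pi}{k}&\sin\frac{j_1\pi}{k}\sin\frac{(j_2+1)\pi}{k}\\[2pt]\sin\frac{(j_1+1)\pi}{k}\sin\frac{j_2\pi}{k}&\sin\frac{(j_1+1)\pi}{k}\sin\frac{(j_2+1)\pi}{k}\end{pmatrix}.$$

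Reading off the second coordinate of $(a,b)\cdot M$ then yields
$$b'-b''=\frac{2\sin\frac{(j_2+1)\pi}{k}}{\sin^2(\pi/k)}\left(a\sin\frac{j_1\pi}{k}+b\sin\frac{(j_1+1)\pi}{k}\right).$$
The assumption $j_1+1+j_2\le k-2$ guarantees in particular that $j_1,j_2\in\{0,\dots,k-2\}$, hence $1\le j_1+1\le k-1$ and $1\le j_2+1\le k-1$, which forces $\sin\frac{(j_1+1)\pi}{k}>0$ and $\sin\frac{(j_2+1)\pi}{k}>0$; similarly $\sin\frac{j_1\pi}{k}\ge 0$. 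Since $a>0$ and $b>0$, the right-hand side is (strictly) positive, which proves the lemma.

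The main (and only) obstacle is the bookkeeping in the matrix product: one must carry the sine indices correctly and notice the simple factorization $M=RL^{j_1}(R-L)L^{j_2}$, which avoids having to resort to lengthy product-to-sum trigonometric identities. Once this factorization is in hand, the rank-one form of $R-L$ collapses the computation to a single product of two sines and the positivity becomes immediate.
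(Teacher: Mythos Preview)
Your proof is correct and takes a genuinely different route from the paper's. The paper argues dynamically via the quotient/Stern--Brocot structure: writing $(x_\ell,x_{\ell+1})=(a,b)RL^{j_1}RL^\ell$ and $(y_\ell,y_{\ell+1})=(a,b)RL^{j_1+1+\ell}$, it observes that $x_{\ell+1}/x_\ell\in I_\ell$ while $y_{\ell+1}/y_\ell\in I_{j_1+1+\ell}$, so the second quotient is always smaller; since $x_0=y_0$, an induction on $\ell$ gives $y_\ell\le x_\ell$. Your argument bypasses the interval machinery entirely: the factorization $M=RL^{j_1}(R-L)L^{j_2}$ together with the rank-one form $R-L=\begin{pmatrix}0&2\\0&0\end{pmatrix}$ collapses $M$ to a rank-one matrix whose entries are manifestly nonnegative products of sines, and positivity of $b'-b''$ drops out. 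Your approach is more elementary and self-contained (it uses only~\eqref{PowersOfL} and~\eqref{RtimesPowersOfL}, not Section~\ref{Sec:SternBrocot}), and even yields the strict inequality $b'>b''$; the paper's approach has the advantage of giving the intermediate comparisons $y_\ell\le x_\ell$ for all $\ell$, and of rehearsing the interval picture that is used again in the next lemma. One cosmetic remark: from $j_1+1+j_2\le k-2$ you actually get the tighter bounds $j_1,j_2\le k-3$, though your weaker statement $j_1,j_2\le k-2$ is already enough for the sine signs.
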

\begin{proof}
For any $\ell\in\{0,\ldots,j_2\}$, set $(x_\ell,x_{\ell+1})\egdef (a,b)RL^{j_1}RL^{\ell}$, and $(y_\ell,y_{\ell+1})\egdef (a,b)RL^{j_1+1+\ell}$. Then the quotient $x_{\ell+1}/x_\ell$ lies in $I_\ell$ (see Section~\ref{Sec:SternBrocot}), whereas the quotient $y_{\ell+1}/y_\ell$ lies in $I_{j_1+1+\ell}$. It follows that $y_{\ell+1}/y_\ell \le x_{\ell+1}/x_\ell$, and since $x_0=y_0$, we inductively get that for all $\ell\in\{0,\ldots,j_2+1\}$, $y_\ell\le x_\ell$. The lemma is proved, observing that $b'=x_{j_2+1}$ and $b''=y_{j_2+1}$. 
\end{proof}

\begin{lemma}
\label{comparek}
Let $\lambda=\lambda_k$.
Let $(X_i)_{i\ge2}$ be a sequence of matrices in $\{R,L\}$, which does not contain $k-1$ consecutive $L$'s and such that $X_2=R$. Let $x_0>0$, $x_1>0$, and set inductively $(x_i,x_{i+1})\egdef (x_{i-1},x_i)X_{i+1}$. Then for any $i\ge0$, $x_{i+k}\ge x_i$.
\end{lemma}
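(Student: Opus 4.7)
The plan is to reduce $M\egdef X_{i+2}X_{i+3}\cdots X_{i+k}$ (so that $(x_{i+k-1},x_{i+k})=(x_i,x_{i+1})M$) to a canonical form via iterated applications of Lemma~\ref{compareRL}, then compute the resulting second component explicitly. Since the ``no $k-1$ consecutive $L$'s'' hypothesis forbids $M=L^{k-1}$, we write uniquely
\[
M = L^{r_0}\,RL^{t_1}\,RL^{t_2}\cdots RL^{t_m},
\]
with $r_0\ge 0$, $m\ge 1$, $t_1,\ldots,t_m\ge 0$, and $r_0+m+\sum_j t_j=k-1$. Setting $\sigma_j\egdef\sin(j\pi/k)$ and using $\sigma_{k-j}=\sigma_j$, the endpoint $b_{k-1-r_0}$ from Section~\ref{Sec:SternBrocot} rewrites as $\sigma_{r_0}/\sigma_{r_0+1}$. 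The hypothesis $X_2=R$ together with ``no $k-1$ consecutive $L$'s'' forces $q_i\egdef x_{i+1}/x_i$ to lie in $\bigcup_{r=0}^{k-2-r_0}I_r=[b_{k-1-r_0},+\infty]$: looking backwards from position $i+1$, any run of $r\ge 0$ consecutive $L$'s ending there is necessarily preceded by an $R$ (thanks to $X_2=R$) and satisfies $r+r_0\le k-2$, so $q_i\in I_r$ for some such $r$. Equivalently, $x_{i+1}\sigma_{r_0+1}\ge x_i\sigma_{r_0}$.

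Apply Lemma~\ref{compareRL} to the last two $RL$-blocks of $M$, with input state $(x_i,x_{i+1}) L^{r_0} RL^{t_1}\cdots RL^{t_{m-2}}$. This gives
\[
\bigl((x_i,x_{i+1})M\bigr)_2 \ge \bigl((x_i,x_{i+1}) L^{r_0} RL^{t_1}\cdots RL^{t_{m-2}} RL^{t_{m-1}+1+t_m}\bigr)_2,
\]
provided (i) $t_{m-1}+1+t_m\le k-2$, which is immediate from $r_0+m+\sum_j t_j=k-1$ with $m\ge 2$ and $r_0\ge 0$, and (ii) the input state has nonnegative entries, which follows from the constraint $x_{i+1}\sigma_{r_0+1}\ge x_i\sigma_{r_0}$ (making both entries of $(x_i,x_{i+1})L^{r_0}$ nonnegative via~\eqref{PowersOfL}) together with the nonnegativity of the entries of each $RL^t$ ($0\le t\le k-2$) from~\eqref{RtimesPowersOfL}. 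Iterating this combining step $m-1$ times reduces $M$ to
\[
M'\egdef L^{r_0}\,RL^{k-2-r_0},
\]
yielding $\bigl((x_i,x_{i+1})M\bigr)_2 \ge \bigl((x_i,x_{i+1})M'\bigr)_2$; boundary cases where an intermediate entry vanishes are handled by continuity.

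It remains to compute $\bigl((x_i,x_{i+1})M'\bigr)_2$ directly. From~\eqref{PowersOfL}, \eqref{RtimesPowersOfL} and $\sigma_{k-j}=\sigma_j$,
\[
L^{r_0}=\frac{1}{\sigma_1}\begin{pmatrix}-\sigma_{r_0-1}&-\sigma_{r_0}\\\sigma_{r_0}&\sigma_{r_0+1}\end{pmatrix},\qquad RL^{k-2-r_0}=\frac{1}{\sigma_1}\begin{pmatrix}\sigma_{r_0+2}&\sigma_{r_0+1}\\\sigma_{r_0+1}&\sigma_{r_0}\end{pmatrix}.
\]
Multiplying these matrices, applying the result to $(x_i,x_{i+1})$, and using the Chebyshev-type identity $\sigma_{r_0-1}\sigma_{r_0+1}=\sigma_{r_0}^2-\sigma_1^2$ (obtained by checking that $\sigma_j^2-\sigma_{j-1}\sigma_{j+1}$ is preserved under $\sigma_{j+1}=\lambda\sigma_j-\sigma_{j-1}$ and equals $\sigma_1^2$ at $j=0$), one obtains
\[
\bigl((x_i,x_{i+1})M'\bigr)_2 = x_i + \frac{2\sigma_{r_0}}{\sigma_1^2}\bigl(x_{i+1}\sigma_{r_0+1}-x_i\sigma_{r_0}\bigr).
\]
Since $\sigma_{r_0}\ge 0$ and $x_{i+1}\sigma_{r_0+1}-x_i\sigma_{r_0}\ge 0$ by the constraint, the right-hand side is at least $x_i$, whence $x_{i+k}\ge x_i$. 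The main subtlety is verifying carefully that all intermediate states during the iterated combining of the second paragraph remain nonnegative (with strict positivity in the interior of the constraint region, extended to the boundary by continuity), so that Lemma~\ref{compareRL} legitimately applies at each stage.
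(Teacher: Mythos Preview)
Your proof is correct. Both your argument and the paper's hinge on Lemma~\ref{compareRL} (merging two $RL$-blocks into one can only lower the second component) and on the observation that the ``no $k-1$ consecutive $L$'s'' hypothesis, combined with $X_2=R$, forces the quotient $x_{i+1}/x_i$ to lie above the threshold $b_{k-1-r_0}=\sigma_{r_0}/\sigma_{r_0+1}$. Where you diverge is in organization and in the handling of the single-$R$ case.

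The paper splits on whether $X_{i+1}=R$ or $X_{i+1}=L$. For $X_{i+1}=R$ it decomposes $X_{i+1}\cdots X_{i+k}$ into $RL^j$-blocks and uses the pointwise inequality $b'\ge b$ for each block. For $X_{i+1}=L$ it first treats the case of a single $R$ by induction on that $R$'s position, then reduces the multi-$R$ case to fewer $R$'s via Lemma~\ref{compareRL}. You instead treat all cases uniformly: you look only at $M=X_{i+2}\cdots X_{i+k}$, write it as $L^{r_0}RL^{t_1}\cdots RL^{t_m}$, and iterate Lemma~\ref{compareRL} to collapse all blocks into the canonical form $M'=L^{r_0}RL^{k-2-r_0}$. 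You then finish with an explicit matrix computation, obtaining the clean identity
\[
\bigl((x_i,x_{i+1})M'\bigr)_2 = x_i + \frac{2\sigma_{r_0}}{\sigma_1^2}\bigl(x_{i+1}\sigma_{r_0+1}-x_i\sigma_{r_0}\bigr),
\]
which makes the role of the constraint $x_{i+1}\sigma_{r_0+1}\ge x_i\sigma_{r_0}$ completely transparent. This replaces the paper's induction on the position of the single $R$ by a direct calculation, and dispenses with the case split on $X_{i+1}$ altogether. Your verification that the intermediate states stay nonnegative (via $(x_i,x_{i+1})L^{r_0}\ge 0$ from the constraint, then propagation through nonnegative $RL^t$-matrices, with continuity at the boundary) is correct; note that the conclusion of Lemma~\ref{compareRL} is a closed inequality, so the extension from strictly positive to merely nonnegative input states is unproblematic.
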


\begin{proof}
If $X_{i+1}=R$, this is just a repeated application of the following claim: If $a>0$, $b>0$, $0\le j\le k-3$, and if we set $(a',b')\egdef (a,b)RL^j$, then $b'\ge b$. Indeed, by \eqref{RtimesPowersOfL}, we have $b'\ge b\, \sin\bigl((j+2)\pi/k\bigr) / \sin\bigl(\pi/k\bigr) \ge b$.

If $X_{i+1}=L$, we first prove the lemma when the sequence $X_{i+1}\ldots X_{i+k}$ contains only one $R$: $X_{i+j}=R$ for some $j\in\{2,\ldots,k-1\}$. We proceed by induction on $j$. 
If $j=2$, then $ (x_{i+k-1},x_{i+k}) = (x_{i-1},x_{i}) LRL^{k-2}$.
By \eqref{RtimesPowersOfL}, the second column of $RL^{k-2}$ is $\begin{pmatrix}1\\0\end{pmatrix}$, thus $x_{i+k}=x_i$. 
Now, assume $j>2$ and that we have proved the inequality up to $j-1$. 
Since the sequence of matrices starts with an $R$ and does not contain $k-1$ consecutive $L$'s, we have $x_{i+1}/x_i \in I_\ell$ for some $\ell\le k-j$ (see Section~\ref{Sec:SternBrocot}). 
In particular, $x_{i+1}/x_i \ge b_{k-j}$. 
Now define $x'_{i+k+1}$ by $(x_{i+k},x'_{i+k+1})\egdef (x_{i+k-1},x_{i+k}) L$. 
We have $x'_{i+k+1}/x_{i+k} \in I_{k-j+1}$, thus is bounded below by $b_{k-j}$. 
Using the induction hypothesis $x'_{i+k+1}\ge x_{i+1}$, we conclude that $x_{i+k}\ge x_i$. 

Finally, assume that the sequence $X_{i+1}\ldots X_{i+k}$ starts with an $L$ and contains several $R$'s. Turning the last $R$ into an $L$, we can apply Lemma~\ref{compareRL} to compare $x_{i+k}$ with the case where there is one less $R$, and prove the result by induction on the number of $R$'s.
\end{proof}

\begin{lemma}
 \label{reverse}
Let $\lambda=\lambda_k$.
Let $\widetilde F_{n}$ be inductively defined by $\widetilde F_0\ge 0$, $\widetilde F_1\ge 0$ and 
$\widetilde F_{n+1}= |\lambda \widetilde F_{n}-\widetilde F_{n-1}|$ for any $n\ge 1$.
Then for any $n\ge 0$, $\widetilde F_{n+k}\le \widetilde F_{n}$.
\end{lemma}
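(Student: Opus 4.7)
Since the statement is translation-invariant in $n$---treating $(\widetilde F_n, \widetilde F_{n+1})$ as new initial data reduces the general case to $n = 0$---I would focus on proving $\widetilde F_k \le \widetilde F_0 = a$ for arbitrary initial values $a, b \ge 0$. The key tool is comparison with the underlying linear Fibonacci sequence $u_n$ defined by $u_0 = a$, $u_1 = b$, $u_{n+1} = \lambda u_n - u_{n-1}$. Since $\lambda = 2\cos(\pi/k)$, the classical closed form $u_n = (b\, s_n - a\, s_{n-1})/s_1$ with $s_m \egdef \sin(m\pi/k)$ gives in particular $u_k = -a$. The non-linear sequence $\widetilde F_n$ agrees with $u_n$ until the smallest time $j_1 \le k$ at which $u_{j_1} < 0$ (which exists unless $a=0$, a trivial case); at the flip one restarts the linear evolution with a new sequence $v^{(1)}$ satisfying $v^{(1)}_{j_1-1} = u_{j_1-1}$, $v^{(1)}_{j_1} = -u_{j_1}$, and one iterates this at successive flip times $j_1 < j_2 < \cdots < j_r \le k$.

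A direct induction on $i$ gives
\[
v^{(r)}_n \;=\; u_n + 2 \sum_{i=1}^r w_i\, \frac{s_{n - j_i + 1}}{s_1}, \qquad w_i \egdef -v^{(i-1)}_{j_i} > 0,
\]
and evaluating at $n = k$ while using $s_{k-j+1} = s_{j-1}$ yields the explicit representation
\[
\widetilde F_k \;=\; \Bigl|\,{-a} + 2 \sum_{i=1}^r w_i\, \frac{s_{j_i - 1}}{s_1}\,\Bigr|.
\]
Writing $S \egdef \sum_i w_i\, s_{j_i-1}/s_1$ (which is manifestly $\ge 0$), the inequality $\widetilde F_k \le a$ is thus equivalent to the upper bound $S \le a$.

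I would prove $S \le a$ by induction on the number $r$ of flips, leveraging the trigonometric identity
\[
s_{j-1}^{2} - s_{j-2}\, s_j \;=\; s_1^{2} \qquad (j \in \ZZ),
\]
which follows from a standard product-to-sum expansion. For $r = 1$, combining the identity with the constraint $u_{j_1-1} \ge 0$ (``no flip before $j_1$'', i.e., $b\, s_{j_1-1} \ge a\, s_{j_1-2}$) gives directly $w_1\, s_{j_1-1} \le a\, s_1$ and hence $S \le a$. For $r \ge 2$ one applies the same estimate to each shifted linear sequence $v^{(i-1)}$, using its nonnegativity on the interval $[j_{i-1}-1, j_i-1]$ to supply the analogous constraint. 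The main obstacle is that the resulting single-flip bounds involve ratios $s_{j_i-1}/s_{j_i-j_{i-1}}$ which can each exceed $1$, so they cannot simply be summed; one must use the trig identity a second time to control the intermediate ``initial values'' $v^{(i-1)}_{j_{i-1}-1}$ in terms of $a$ propagated through the preceding flips, so that the contributions fit together by telescoping and yield the required bound $S \le a$.
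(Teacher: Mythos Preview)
Your setup is correct and genuinely different from the paper's argument. You correctly reduce to $n=0$, express $\widetilde F_k$ via the flip decomposition as $-a+2S$ with $S=\sum_i w_i\,s_{j_i-1}/s_1\ge0$, and your $r=1$ computation using $s_{j-1}^2-s_{j-2}s_j=s_1^2$ together with $u_{j_1-1}\ge0$ is clean and right. The gap is the inductive step for $r\ge2$: you correctly identify that the single-flip bound gives $w_i\,s_{j_i-j_{i-1}}\le \widetilde F_{j_{i-1}-1}\,s_1$, which involves the wrong sine factor, and you then assert that a second use of the identity makes things ``fit together by telescoping'' --- but you never exhibit the telescoping quantity or the inductive invariant. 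Without that, there is no proof: one needs an explicit relation linking $\widetilde F_{j_{i-1}-1}\,s_{j_{i-1}-1}$ (or some such combination) across consecutive flips, and it is not obvious what it should be or that it closes up to give $\sum_i w_i\,s_{j_i-1}\le a\,s_1$.

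The paper sidesteps this entirely by time reversal. Setting $G_m\egdef\widetilde F_{-m}$, the relation $\widetilde F_{m+1}=\pm(\lambda\widetilde F_m-\widetilde F_{m-1})$ rewrites as $G_{m-1}=\lambda G_m\mp G_{m+1}$, so the reversed sequence is a \emph{forward} $R/L$-Fibonacci sequence: an $L$ when there was no flip, an $R$ when there was. A short argument (if $k-1$ consecutive $L$'s ever occurred, the second column of $L^{k-1}$ forces two nonnegative terms to be negatives of each other, hence both zero, and one harmlessly changes an $L$ to an $R$) shows the reversed word never contains $L^{k-1}$. Then the previous lemma (Lemma~\ref{comparek}) gives $G_{n+k}\ge G_n$, which is exactly $\widetilde F_{n+k}\le\widetilde F_n$. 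This is a two-line reduction once Lemma~\ref{comparek} is in hand, and it avoids any multi-flip bookkeeping. If you want to salvage your direct trigonometric route, you must actually produce the invariant that replaces the missing ``telescoping'' --- as written, the $r\ge2$ case is an assertion, not an argument.
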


\begin{proof}
For $n\le 0$, let $G_{n} \egdef \widetilde F_{-n}\ge 0$. 
Then, for any $n\le -1$, we have 
$$
(G_{n}, G_{n+1})= \begin{cases}
                        (G_{n-1}, G_n)L & \mbox{ if }\lambda \widetilde F_{n}\ge\widetilde F_{n-1},\\
			(G_{n-1}, G_n)R & \mbox{ otherwise.}
                   \end{cases}
$$
Moreover, we can assume that the sequence of matrices in $\{R,L\}$ corresponding to $(G_n)$ never contains $k-1$ consecutive $L$'s. 
Indeed, the second column of $L^{k-1}$ is $\begin{pmatrix}-1\\0\end{pmatrix}$. 
Thus, if we had $k-1$ consecutive $L$'s, we could find $n$ such that $-G_{n-1}=G_{n+k-1}$, which is possible only if $G_{n-1}=G_{n+k-1}=0$.
But if such a situation occurs we can always turn the first $L$ into an $R$ without changing the sequence (because $(0, G_n)R=(0, G_n)L$). The result is thus a direct application of Lemma~\ref{comparek}.
\end{proof}

\begin{proof}[Proof of Proposition~\ref{compare} when $\lambda=\lambda_k$]
Suppose the difference between $(X_i)$ and $(X'_i)$ occurs at level $j$.
We decompose $(X_{j})_{i\ge j}$ as $LL^rY$ and $(X'_{j})_{i\ge j}$ as $RL^rY$, where $0\le r\le +\infty$ and $Y=(Y_i)_{i\ge j+r+1}$ is a sequence of letters in the alphabet $\{R, L\}$ such that $Y_{j+r+1}=R$. 

\medskip
Suppose first that, after the difference, all letters  are $L$'s ($Y=\emptyset$). 
Let $j_1\in\{0, \dots, k-2\}$ be such that $\widetilde F_{j-1}/\widetilde F_j \in I_{j_1}$. 
Without loss of generality, we can assume that the sequences $(X_i)$ and $(X'_i)$ are reduced before their first difference. 
Then, $X_{j-j_1-1}\dots X_{j-1}=X'_{j-j_1-1}\dots X'_{j-1}=RL^{j_1}$. 

By Lemma~\ref{compareRL}, $\widetilde F_{j+s}\ge \widetilde F'_{j+s}$ for all $0\le s\le j_2$, where $j_2:= k-3-j_1$. 

Now, by Lemma~\ref{comparek}, for all $1+j_2\le s\le k-2$, 
$\widetilde F_{j+s}\ge \widetilde F_{j+s-k}$, which is equal to $\widetilde F'_{j+s-k}$ since $s<k$. 
On the other hand, when $s=j_2+1$, we have $\widetilde F'_{j+j_2+1-k}=\widetilde F'_{j+j_2+1}$ because $X'_{j-j_1-1}\dots X'_{j+j_2+1}=RL^{k-1}$. Moreover, by Lemma~\ref{reverse}, $\widetilde F'_{j+s-k}\ge \widetilde F'_{j+s}$ for all $1+j_2< s\le k-2$. 
We thus get that $\widetilde F_{j+s}\ge \widetilde F'_{j+s}$ for all $j_2+1\le s\le k-2$.

If $s\ge k-1$, reducing the pattern $RL^{k-1}$ in the sequence $(X_{j})_{i\ge j}$, we have $\widetilde F_{j+s}=\widetilde F'_{j+s-k}$ which is larger than $\widetilde F'_{j+s}$ by Lemma~\ref{reverse}.

\medskip
Suppose now that the suffix $Y$ is reduced. 
The above argument shows that all labels up to $j+r$ are well-ordered: 
In particular, $\widetilde F_{j+r-1} \le \widetilde F'_{j+r-1}$ and $\widetilde F_{j+r} \le \widetilde F'_{j+r}$.
Since $Y$ is reduced, we can write, for any $n\ge j+r$,
$(\widetilde F_{n}, \widetilde F_{n+1}) = (\widetilde F_{j+r-1}, \widetilde F_{j+r})Y_{j+r+1}\cdots Y_{n+1}$, where each $Y_i$ is interpreted as the corresponding matrix (the same equality is valid if we replace $\widetilde F$ by $\widetilde F'$). 
The product $Y_{j+r+1}\cdots Y_{n+1}$ can be decomposed into blocks of the form $RL^\ell$, with $0\le \ell\le k-2$, which are matrices with nonnegative entries. 
Therefore, for any $n\ge j+r$, the label $\widetilde F_{n}$ is a linear combination of $\widetilde F_{j+r-1}$ and $\widetilde F_{j+r}$ , with nonnegative coefficients. 
Moreover, it is also true with the same coefficients if we replace $\widetilde F$ by $\widetilde F'$.
We conclude that $\widetilde F_{n}\le \widetilde F'_{n}$.

In the general case, we make all possible reductions on $Y$. We are left either with a reduced sequence or with a sequence of $L$'s, which are the two situations we have already studied.
\end{proof}

\begin{remark}
In~\cite{janvresse2007}, a formula for the derivative of $\gamma_{p,1}$ with respect to $p$ was given, involving the product measure $\nu_{3,\rho}\otimes\nu_{3,\rho}$. We do not know whether this formula can be generalized to other $k$'s.

\end{remark}

\subsection{Variations with $\lambda$}
\label{Sec:variations_k}

For $p=1$, the deterministic sequence $F_n=\widetilde F_n$ grows exponentially fast, and we have in that case
$$\widetilde  \gamma_{1,\lambda} =\gamma_{1,\lambda} =  \log \left( \dfrac{\lambda+\sqrt{\lambda^2+4}}{2} \right), $$
which is increasing with $\lambda$.

We conjecture that, when $p$ is fixed, $\gamma_{p,\lambda_k}$ and $\widetilde  \gamma_{p,\lambda_k}$ are increasing with $k$, and that $\gamma_{p,\lambda}$ is increasing with $\lambda$ for $\lambda\ge2$ (see Figure~\ref{Fig:gammas}).

\begin{figure}[h]
	\begin{center}
	\input{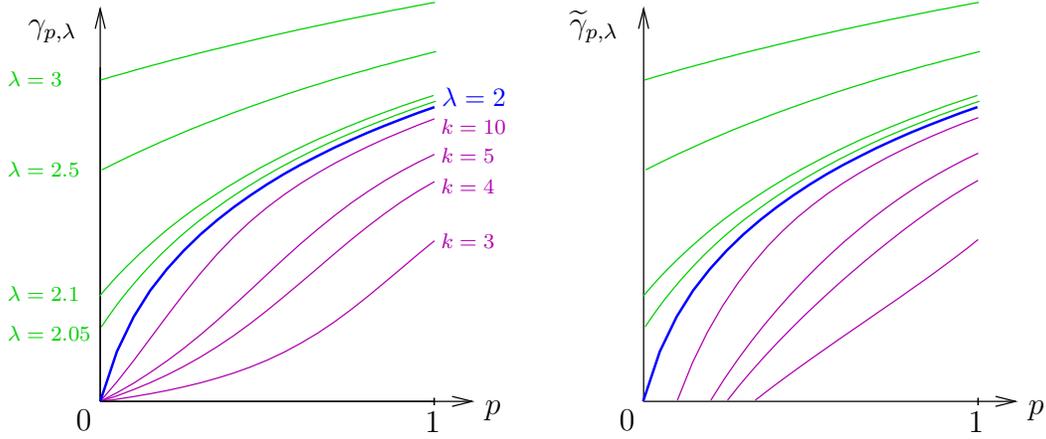}
	\end{center}
\caption{The value of $\gamma_{p,\lambda}$ (linear case, left) and $\widetilde\gamma_{p,\lambda}$ (non-linear case, right) for $\lambda=\lambda_k$, $k=3, 4, 5, 10$, $\lambda=2$ (bold), $\lambda=2.05$, $2.1$, $2.5$ and $3$. Numerical computations support the conjecture that $\gamma_{p,\lambda}$ and $\widetilde\gamma_{p,\lambda}$ are increasing with $\lambda$.}
\label{Fig:gammas}
\end{figure} 

\section{Connections with Embree-Trefethen's paper}
\label{Sec:Embree-Trefethen}
\subsection{Positivity of the Lyapunov exponent}
We have proved that the largest Lyapunov exponent corresponding to the linear $\lambda$-random Fibonacci sequence is positive for all $p$.
In~\cite{embree1999}, Embree and Trefethen study a slight modification of our linear random Fibonacci sequence when $p=1/2$. To be exact, they study the random sequence $x_{n+1}=x_n\pm \beta x_{n-1}$, which by a simple rescaling gives our linear $\lambda$-random Fibonacci sequence where $\lambda=1/\sqrt{\beta}$ (see our introduction). However, the exponential growth is not preserved by this rescaling.  More precisely, the exponential growth $\sigma(\beta)=\lim |x_n|^{1/n}$ of Embree and Trefethen's sequence satisfies
$$ \log \sigma(\beta) = \gamma_{1/2,\lambda} - \log\lambda. $$
In particular, $\sigma(\beta)<1$ if and only if $\gamma_{1/2,\lambda} < \log\lambda$, which according to the simulations described in their paper happens for $\beta<\beta^*\approx 0.70258\ldots$ (which corresponds to $\lambda>1.19303\ldots$).

By Theorem~\ref{croissance}, the function $p\mapsto\gamma_{p,\lambda}$ is continuous and increasing from $0$ to $\gamma_{1,\lambda}>\log\lambda$. Hence there exists a unique $p^*(\lambda)\in[0,1]$ such that, for $p<p^*$, $\gamma_{p,\lambda}<\log\lambda$ and for $p>p^*$, $\gamma_{p,\lambda}>\log\lambda$.
According to~\cite{embree1999}, for $\lambda=1$ we have $p^*<1/2$, and for $\lambda=\lambda_k$ ($k\ge 4$) and $\lambda\ge2$, $p^*>1/2$.

For $\lambda\ge2$, we can indeed prove that $\gamma_{1/2,\lambda}<\log \lambda$: By Jensen's inequality, we have
$$ \gamma_{1/2,\lambda} < \log \left(\int_B^{\lambda+1/B} x \, d\mu_{1/2,\lambda}\right), $$
which is equal to $\log\lambda$ by symmetry of the measure $\mu_{1/2,\lambda}$.

For $\lambda=1$, we know that $\gamma_{p,1}>0$ for all $p>0$ thus $p^*=0$. When $\lambda=\lambda_k$, $k\ge 4$, numerical computations of the integral confirm that $p^*>1/2$, but we do not know how to prove it.

\subsection{Sign-flip frequency}
Embree and Trefethen  introduce the \textit{sign-flip frequency} as the proportion of values $n$ such that $F_nF_{n+1}<0$, and give (without proof) the estimate $2^{-\pi\lambda/\sqrt{4-\lambda^2}}$ for this frequency, as $\lambda\to2$, $\lambda<2$.

Note that, for $\lambda\ge 2$, there are no sign change as soon as $n$ is large enough, and the sign-flip frequency is zero.

For $\lambda=\lambda_k$, recall that for $n$ large enough, the sign of the reduced sequence $(F_n^r)$ is constant (see Lemma~\ref{L+}). 
Moreover, by~\eqref{reduction} and the fact that for all $0\le j\le k-2$ the matrix $RL^j$ has nonnegative entries (see~\eqref{RtimesPowersOfL}), the product $F_nF_{n}^r$ changes sign if and only if a pattern $RL^{k-1}$ is removed.
Thus, the sign-flip frequency is equal to the frequency of deletions in the reduction process.

Note that we have to make sure that this frequency indeed exists. 
This can be seen by considering the reduction of the left-infinite i.i.d. sequence $(X^*)_{-\infty}^0$ (Section~\ref{reductioninfinie}), since for $n$ large enough, deletions in the reduction process of $(X)_3^n$ occur at the same times as in the reduction process of $(X^*)_{-\infty}^n$. 
In the latter case, the ergodic theorem ensures that the frequency $\sigma$ of deletions exists and is equal to the probability that $(X^*)_{-\infty}^0$ be not proper.
By Lemma~\ref{Lem:excursion}, $(X^*)_{-\infty}^0$ is not proper if and only if there exists a unique $\ell>0$ such that $(X^*)_{-\ell}^0$ is an excursion, and $(X^*)_{-\infty}^{-\ell-1}$ is proper. Thus,
$$\sigma = \sum_{w \mbox{\begin{scriptsize} excursions\end{scriptsize}}}\ \PP(w) (1-\sigma).$$
By~\eqref{eq:excursion}, we get that the sign-flip frequency is equal to
\begin{equation}
 \sigma = \sigma(\lambda_k,p) = \frac{p(1-p_R)}{p+(1-p)p_R+p(1-p_R)}. %= \frac{p\rho^{k-1}}{(1-\rho^{k-1})(1-2p)+2p}.
\end{equation}

Now, for a fixed $p\in ]0,1[$, we would like to obtain an estimate for $\sigma$ as $k\to\infty$. 
First, observe that $p_R=p_R(k)\to 1$ as $k\to\infty$. Indeed, recalling the expression of the function~$g$ given by~\eqref{survival-pr}, for any $x\in ]0,1[$, we have $g(x)<0$ for $k$ large enough, which implies $p_R>x$. Then, since $p_R$ satisfies
$$ 1-p_R = \left(1-\dfrac{pp_R}{p+(1-p)p_R}\right)^{k-1}, $$
we get that $p_R\to 1$ exponentially fast with $k$. Using this estimation in the above equation, elementary computations lead to 
$$1-p_R \mathop{\sim}_{k\to\infty} (1-p)^{k-1}.$$
Thus, 
$$\sigma(\lambda_k,p)\mathop{\sim}_{k\to\infty} p(1-p)^{k-1}.$$

For $p=1/2$, this proves the estimate provided in~\cite{embree1999} in the special case $\lambda=\lambda_k$.

\bibliography{rf-rosen.bib}
\end{document}